\let\expandafter\oldproof\csname\string\proof\endcsname
\let\oldendproof\endproof
\renewenvironment{proof}[1][\proofname]{%
	\oldproof[\bf #1]%
}{\oldendproof}
\theoremstyle{plain}
\newtheorem{theorem}{Theorem}[section]
\newtheorem{lemma}[theorem]{Lemma}
\newtheorem{claim}[theorem]{Claim}
\newtheorem{proposition}[theorem]{Proposition}
\newtheorem{problem}[theorem]{Problem}
\newtheorem{definition}[theorem]{Definition}
\newtheorem{strategy}[theorem]{Strategy}
\newtheorem{setting}[theorem]{Setting}
\newtheorem{question}[theorem]{Question}
\newcommand{\Bin}{\ensuremath{\textrm{Bin}}}
\newcommand{\whp}{w.h.p.}
\newcommand{\Builder}{\textsf{Builder}\xspace}
\definecolor{RED}{rgb}{1,0,0}\definecolor{BLUE}{rgb}{0,0,1} 
\title{Very fast construction of bounded-degree spanning graphs\\ via the semi-random graph process}
\author{
	Omri Ben-Eliezer \thanks{Blavatnik School of Computer Science, Raymond and Beverly Sackler Faculty of Exact Sciences, Tel Aviv University, Tel Aviv, 6997801, Israel. Email: omrib@mail.tau.ac.il.} 
	\and 
	Lior Gishboliner \thanks{School of Mathematical Sciences, Raymond and Beverly Sackler Faculty of Exact Sciences, Tel Aviv University, Tel Aviv, 6997801, Israel. Email: liorgis1@mail.tau.ac.il. Supported in part by ERC Starting Grant 633509.}
	\and 
	Dan Hefetz \thanks{Department of Computer Science, Ariel University, Ariel 40700, Israel. Email: danhe@ariel.ac.il. Research supported by ISF grant 822/18.}
	\and
	Michael Krivelevich \thanks{School of Mathematical Sciences, Raymond and Beverly Sackler Faculty of Exact Sciences, Tel Aviv University, Tel Aviv, 6997801, Israel. Email: krivelev@tauex.tau.ac.il. Partially supported by USA-Israel BSF grants 2014361 and 2018267, and by ISF grant 1261/17.}
}
\begin{document}
\begin{titlepage}

\clearpage\maketitle
\thispagestyle{empty}

\begin{abstract}
\emph{Semi-random processes} involve an adaptive decision-maker, whose goal is to achieve some predetermined objective in an online randomized environment. 
In this paper, we consider a recently proposed semi-random graph process, defined as follows: we start with an empty graph on $n$ vertices, and in each round, the decision-maker, called \Builder, receives a uniformly random vertex $v$, and must immediately (in an online manner) choose another vertex $u$, adding the edge $\{u,v\}$ to the graph. \Builder's end goal is to make the constructed graph satisfy some predetermined monotone graph property.
There are also natural offline and non-adaptive modifications of this setting.

We consider the property $\mathcal{P}_H$ of containing a spanning graph $H$ as a subgraph. It was asked by N. Alon whether for every bounded-degree $H$, \Builder can construct a graph satisfying $\mathcal{P}_H$ with high probability in $O(n)$ rounds. We answer this question positively in a strong sense, showing that any graph with maximum degree $\Delta$ can be constructed with high probability in $(3\Delta/2 + o(\Delta)) n$ rounds, where the $o(\Delta)$ term tends to zero as $\Delta \to \infty$. This is tight (even for the offline case) up to a multiplicative factor of $3 + o_{\Delta}(1)$. 
Furthermore, for the special case where $H$ is a forest of maximum degree $\Delta$, we show that $H$ can be constructed with high probability in $O(\log \Delta)n$ rounds. This is tight up to a multiplicative constant, even for the offline setting. Finally, we show a separation between {\em adaptive} and {\em non-adaptive} strategies, proving a lower bound of $\Omega(n\sqrt{\log n})$ on the number of rounds necessary to eliminate all isolated vertices w.h.p. using a non-adaptive strategy. This bound is tight, and in fact there are non-adaptive strategies for constructing a Hamilton cycle or a $K_r$-factor, which are successful w.h.p. within $O(n\sqrt{\log n})$ rounds. 
\end{abstract}
\end{titlepage}


\section{Introduction} \label{sec:intro}


Recently, the following \emph{semi-random graph process} was proposed by Peleg Michaeli, and analyzed by Ben-Eliezer, Hefetz, Kronenberg, Parczyk, Shikhelman, and Stojakovi{\'c} \cite{BHKPSS}. A single adaptive player, called \Builder, starts with an empty graph $G$ on a set $V$ of $n$ vertices. The process then proceeds in \emph{rounds}, where in each round \Builder is offered a uniformly random vertex $v$, and chooses an edge of the form $\{v,u\}$ to add to the graph $G$. 
\Builder's objective is typically to construct a graph that satisfies some predetermined monotone graph property; for example, to make $G$ an expander with certain parameters, or to have $G$ contain a Hamilton cycle. The natural question arising in this context is the following:
\begin{center}
\emph{Given a monotone graph property $\mathcal{P}$, how many rounds of the semi-random graph process are required for \Builder to construct (with high probability\footnote{With probability that tends to one as $n \to \infty$; abbreviated \whp~henceforth.}) a graph which satisfies $\mathcal{P}$?}
\end{center}

Semi-random problems of this type, involving both randomness and intelligent choices made by a ``decision-maker'', have been widely studied in the algorithmic literature.  
One of the first (and most famous) results on such processes, established by Azar et al.~\cite{ABKU2000}, concerns sequential allocation of $n$ balls into $n$ bins, where the goal is to minimize the number of balls in the fullest bin. It is well-known that if each ball is simply assigned to a bin uniformly at random, then w.h.p., the fullest bin will contain $\Theta(\ln n /  \ln \ln n)$ balls at the end of the process. However, as was shown in \cite{ABKU2000}, very limited ``intelligent intervention'' substantially improves the above bound: if, instead of the random assignment, for any ball we are given \emph{two (random) choices} of bins to pick from, then the trivial strategy of always choosing the least loaded bin out of the two offered, results w.h.p. in the maximum bin load dropping to $\Theta(\ln \ln n)$ -- an exponential improvement. 
This idea has inspired many subsequent theoretical and practical results in various contexts within computer science, see e.g.~\cite{FNP, PR, TRL} for a small sample of these. 

In a sense, semi-random processes can be viewed as settings where an \emph{online algorithm} aims to achieve a predetermined objective in a \emph{randomized environment}. As opposed to the ``standard'' setting where online algorithms are measured in terms of their worst-case performance,  in the semi-random setting the task is to design online algorithms that achieve their goal with high (or at least constant) probability, and require as few rounds as possible. Further discussion of related semi-random graph models, such as the so-called Achlioptas model, can be found in Section \ref{subsec:other_semi_random}.



In this paper we continue the investigation into the semi-random graph process. The first work on this topic \cite{BHKPSS} proved upper and lower bounds on the number of rounds required to w.h.p.~satisfy various properties of interest. Among the upper bounds were a $O(n^{1-\varepsilon})$ bound for the property of containing a copy of any fixed graph $H$ (here $\varepsilon$ depends on $H$), a $O(n)$ bound for containing a perfect matching or a Hamilton cycle, and a $O(\Delta n)$ bound for the property of having minimum degree $\Delta$, as well as for the property of $\Delta$-vertex-connectivity. 
These results prompted Noga Alon to ask whether it is the case that every given (spanning) graph of bounded maximum degree can be constructed w.h.p.~in $O(n)$ rounds in this model. Formally, define $\mathcal{P}_H$ as the property of containing an (unlabeled) copy of $H$, i.e., as the property that there exists an injection 
$\varphi \colon V(H) \to V(G)$ (where $G$ is the graph constructed by \Builder), 
so that $\{\varphi(i), \varphi(j)\} \in E(G)$ for every $\{i,j\} \in E(H)$. 
\begin{question}[Alon, Question 6.2 in \cite{BHKPSS}]
	\label{question:Alon}
Consider the semi-random graph process over $n$ vertices.
Is it true that for every graph $H$ on $n$ vertices with bounded maximum degree, there exists a strategy which enables \Builder to w.h.p. construct a copy of $H$ in $O(n)$ rounds?
\end{question}
\noindent
The main result of this paper gives a positive answer to the above question (see Theorem \ref{thm:main}). 

\paragraph{The offline setting}
Before addressing Question \ref{question:Alon}, let us consider the easier \emph{offline} setting, where \Builder is provided {\em in advance} with the full sequence of vertices offered throughout the process. In other words, in this setting \Builder does not need to make his decisions online, but can rather 
choose all edges at once after seeing the sequence of random vertices. 
%

As an example, consider the case where \Builder's goal is to construct a triangle-factor\footnote{For a graph $F$ and an integer $n$ divisible by $|V(F)|$, the $n$-vertex $F$-factor is the graph which consists of $n / |V(F)|$ vertex-disjoint copies of $F$.}. 
Observe that if at some point of the process, at least $\frac{2n}{3}$ different vertices 
have been offered, among which at least $\frac{n}{3}$ vertices have been offered at least twice, then \Builder can already construct a triangle-factor (in the offline setting). Indeed, \Builder simply partitions the vertices into triples $\{u,v,w\}$, where $u$ was offered at least twice and $v$ was offered at least once, and chooses the edges $\{u,v\}$ and $\{u,w\}$ at rounds when $u$ was offered, and the edge $\{v,w\}$ at a round when $v$ was offered.  
It is easy to check that $O(n)$ rounds suffice w.h.p. to have at least $\frac{2n}{3}$ different vertices offered, and moreover to have at least $\frac{n}{3}$ vertices offered at least twice. Thus, in the offline setting \Builder can construct a triangle-factor in $O(n)$ rounds. 
This $O(n)$ bound can in fact be generalized in a strong sense to \emph{any} bounded-degree target graph $H$. Proposition 4.1 in~\cite{BHKPSS} -- which presents necessary and sufficient general winning conditions for \Builder in the offline setting -- implies that \Builder wins the game as soon as the list of offered vertices allows the construction of a suitable \emph{orientation} of $H$. 
In Section \ref{subsec:initial} we show how this can be used to get the following general offline result.
\begin{proposition} \label{prop::offline}
	Let $\Delta,n > 0$ be integers, and let $H$ be an $n$-vertex graph of maximum degree $\Delta$. In the offline version of the semi-random process on $n$ vertices, \Builder has a strategy allowing him to construct a copy of $H$ in 
	$(\Delta/2 + o(\Delta))n$ rounds w.h.p.
\end{proposition}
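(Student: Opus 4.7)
The plan is to invoke the offline characterization of \cite[Proposition 4.1]{BHKPSS}, which (after \Builder sees the offered vertex sequence) reduces the problem to finding an embedding $\varphi\colon V(H) \hookrightarrow V(G)$ and an orientation of the copy $\varphi(H)$ in which every vertex $v \in V(G)$ has out-degree at most $c(v)$, where $c(v)$ denotes the number of times $v$ is offered. By a standard Hall / max-flow argument, such an orientation exists if and only if
\[
 e_{\varphi(H)}(S) \;\leq\; \sum_{v \in S} c(v) \qquad \text{for every } S \subseteq V(G),
\]
provided the total budget $\sum_v c(v) = T$ is at least $e(H)$.

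I would identify $V(H)$ with $V(G)$, take $\varphi$ to be the identity, and set $T = (\Delta/2 + \varepsilon(\Delta))\, n$ for an appropriate $\varepsilon(\Delta) = o(\Delta)$. The counts $c(v) \sim \mathrm{Bin}(T, 1/n)$ have mean $\Delta/2 + \varepsilon$, so $\sum_{v \in S} c(v)$ has mean $(\Delta/2+\varepsilon)|S|$; the goal is then to verify the displayed inequality for all $S$ simultaneously, via a union-bounded Chernoff estimate. I would split the analysis by $s = |S|$: for $s \leq \Delta$, exploit $e_H(S) \leq \binom{s}{2}$, which leaves a buffer of order $\Delta s$ relative to the mean; for $\Delta < s \leq n/2$, rely on the trivial bound $e_H(S) \leq \Delta s / 2$ and on the buffer $\varepsilon s$, getting a per-$S$ failure probability $\exp(-\Omega(\varepsilon^2 s/\Delta))$; and for $s > n/2$, pass to the complement $V \setminus S$ and use the total-budget surplus $T - e(H) \geq \varepsilon n$ to control the error.

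The main obstacle is the medium range $\Delta < s \leq n/2$, where the union bound against $\binom{n}{s}$ requires $\varepsilon^2/\Delta$ to dominate $\log(en/s)$; this directly allows $\varepsilon = o(\Delta)$ only in the regime $\Delta \gg \log n$. For smaller $\Delta$, I would instead randomize the embedding $\varphi$ (uniformly, independently of the offered sequence), so that $\mathbb{E}[e_{\varphi(H)}(S)] \leq \Delta s^2/(2n)$, gaining an additional buffer of order $\Delta s(1 - s/n)$ in the Hall bound; this shrinks the bad regime to $s$ very close to $n$, which is again handled via the complementary budget argument. For $\Delta$ below any fixed threshold, the desired bound reduces to the $O(n)$ results already established in \cite{BHKPSS}, whose loss can be absorbed into $\varepsilon(\Delta)$.
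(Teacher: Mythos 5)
Your plan correctly invokes the offline characterization from \cite{BHKPSS} (an orientation of $\varphi(H)$ with $d^+(v) \leq c(v)$), and your Hall-type reformulation $e_{\varphi(H)}(S) \leq \sum_{v\in S} c(v)$ for all $S$ is a valid equivalent condition. The paper, however, takes a more structured route that sidesteps verifying Hall over exponentially many subsets: via Lemma~\ref{lem::orientations} it fixes an orientation $D$ of $H$ \emph{in advance} with $d^+_D(u) \leq \lfloor \Delta/2 \rfloor + 1$ everywhere and, crucially, with a set $A \subseteq V(H)$ of size at least $n/(\Delta^2+1)$ on which $d^+_D \equiv 0$. After seeing the offer sequence, the embedding $\varphi$ is chosen so that the few vertices offered fewer than $\lfloor\Delta/2\rfloor+1$ times (w.h.p.\ fewer than $n/(\Delta^2+1)$ of them, by Lemma~\ref{lem:number_of_appearances}) are placed on $A$, so they need no budget at all, and the rest are placed arbitrarily.

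There is a genuine gap in your argument precisely in the regime you try to patch with a random embedding: the small-$S$ end of Hall, not just the medium range, obstructs the argument whenever $\omega(1) \leq \Delta \leq O(\log n)$. Consider $S = \{u,v\}$ with $\{\varphi^{-1}(u),\varphi^{-1}(v)\} \in E(H)$; Hall requires $c(u)+c(v)\geq 1$. With $T = (\Delta/2 + o(\Delta))n$ rounds, $\mathbb{P}[c(u)=c(v)=0] \approx e^{-2T/n} = e^{-\Delta - o(\Delta)}$, and there are up to $e(H) \leq \Delta n/2$ such pairs, so the expected number of failures is $\Theta(\Delta n e^{-\Delta})$; this vanishes only when $\Delta - \log\Delta \gtrsim \log n$. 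Randomizing $\varphi$ does not repair this: for a fixed pair $\{u,v\}$, the probability that $\varphi^{-1}(\{u,v\})$ is an $H$-edge \emph{and} $c(u)=c(v)=0$ is about $(\Delta/n)\cdot e^{-\Delta}$, and the union bound over all $\binom{n}{2}$ pairs again lands on $\Theta(\Delta n e^{-\Delta})$ --- the gain in $\mathbb{E}[e_{\varphi(H)}(S)]$ is useless at $|S|=2$ because you need a tail bound on a Bernoulli$(\Theta(\Delta/n))$, not a bound on its mean. Your final fallback (``$\Delta$ below any fixed threshold'') does not cover $\Delta \to \infty$ with $\Delta = O(\log n)$. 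The missing idea is precisely the paper's: the embedding must depend on the realized offer counts, and the orientation must carry a linear-in-$n$ set of out-degree-$0$ sinks onto which all poorly-offered vertices can be mapped.
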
 
The $o(\Delta)$ term here is a function of $\Delta$ satisfying $o(\Delta)/\Delta \rightarrow 0$ as $\Delta \to \infty$.
Proposition \ref{prop::offline} substantially extends Theorem 1.9 in \cite{BHKPSS}, which showed a similar result for the property of having minimum degree at least $k$ (with an explicit dependence on $k$). 
Proposition \ref{prop::offline} is clearly optimal up to the $o(\Delta)$ term, since $\Delta$-regular graphs on $n$ vertices have exactly $\Delta n / 2$ edges (and hence trivially require at least this number of rounds). 
More interestingly, it turns out that the $o(\Delta)$-term is unavoidable; it follows from \cite[Theorem 1.9]{BHKPSS} that at least 
$(1/2 + \varepsilon_{\Delta})\Delta n$ rounds are required for \Builder to construct a graph of minimum degree at least $\Delta$, where $\varepsilon_{\Delta} > 0$ (and $\varepsilon_{\Delta} \rightarrow 0$ as $\Delta \rightarrow \infty$).

\paragraph{Main result: online strategy for constructing bounded-degree spanning graphs}
We now return to the more challenging \emph{online} setting, where \Builder is offered vertices one-by-one and must (irrevocably) decide which edge to add immediately after being offered a vertex.
Our main result in this paper, Theorem \ref{thm:main}, asserts that \Builder can construct any given bounded-degree spanning graph in $O(n)$ rounds w.h.p.

\begin{theorem} \label{thm:main}
	Let $\Delta, n > 0$ be integers and let $H$ be an $n$-vertex graph of maximum degree $\Delta$. In the online version of the semi-random graph process on $n$ vertices, \Builder has a strategy guaranteeing that \whp, after 
	$$
	\begin{cases}
	(\Delta/2 + o(\Delta)) n, & \textrm{ if } \Delta = \omega(\log n) \\
	(3\Delta/2 + o(\Delta)) n, & \textrm{ otherwise } 
	\end{cases}
	$$ 
	rounds of the process, the constructed graph will contain a copy of $H$.
\end{theorem}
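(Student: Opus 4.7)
The plan is to split the argument into two regimes based on $\Delta$. In each regime \Builder first orients $H$ as in the offline proof of Proposition~\ref{prop::offline}, and then converts that orientation into an online strategy.

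\emph{Common setup.} Orient $H$ so that every vertex has out-degree at most $\lceil \Delta/2 \rceil$ (add a few dummy edges to parity-balance, fix an Eulerian circuit, orient along it) and fix an injection $\varphi : V(H) \to V(G)$. The basic move is: when the offered vertex $v$ equals $\varphi(u)$, play the next unplayed out-edge $(u,w)$ of $u$ as $\{v, \varphi(w)\}$; otherwise play an arbitrary ``waste'' edge.

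\emph{High-degree regime ($\Delta = \omega(\log n)$).} Fix any bijection $\varphi$ and run for $T = (\Delta/2 + o(\Delta))n$ rounds. Each vertex of $V(G)$ is offered $\mathrm{Bin}(T, 1/n)$ times with expectation $T/n$, and because $\Delta = \omega(\log n)$ a Chernoff bound together with a union bound over the $n$ vertices show w.h.p.\ that every vertex is offered at least $\lceil \Delta/2 \rceil$ times, so every out-edge is played and $H$ is embedded. This regime is essentially a direct online implementation of the offline argument.

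\emph{Low-degree regime ($\Delta = O(\log n)$).} Here $T = (3\Delta/2 + o(\Delta))n$ and the naive strategy fails because a positive fraction of $V(G)$ is offered fewer than $\lceil \Delta/2 \rceil$ times w.h.p. The plan is to choose $\varphi$ \emph{online}, using the extra $(\Delta + o(\Delta))n$ rounds as slack. \Builder sets aside a small reserve $S \subseteq V(G)$ of size $o(n)$; when a fresh vertex $v \in V(G) \setminus S$ is offered for the first time, \Builder assigns $\varphi^{-1}(v)$ greedily to an unembedded $u \in V(H)$ so as to preserve a Hall-type matching condition tailored to the empirical offering profile (high-offered vertices matched to high-out-degree $u$'s, and vice versa); on subsequent offerings of $v$, \Builder plays out-edges of $u$; and the few out-edges that cannot yet be played directly are routed through $S$ and cleaned up at the end. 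The main obstacle, and the core technical task, will be to verify that the Hall-type condition is maintained throughout the online assignment and that the number of reroutings is at most $o(\Delta) n$, so that the reserve and the surplus rounds suffice. This rests on concentration of the offering statistics together with a careful choice of the orientation of $H$ matching that empirical distribution.
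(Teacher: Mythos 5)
Your high-degree regime is correct and is essentially the paper's Proposition~\ref{prop:Delta >> logn}: fix a balanced orientation and a fixed bijection, and Chernoff plus a union bound show every vertex is offered $\geq \lfloor\Delta/2\rfloor+1$ times w.h.p.\ when $\Delta = \omega(\log n)$.

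Your low-degree regime, however, is genuinely different from the paper and as sketched contains a real gap. The central difficulty is that in the online setting, when a vertex $v$ is offered for the \emph{first} time, you do not yet know how many more times it will be offered, so you cannot ``match high-offered vertices to high-out-degree $u$'s'' at that moment. Equally, the ``route through $S$, clean up at the end'' step is not a mechanism: out-edges must be played to \emph{specific} vertices $\varphi(w)$ of $V(G)$, and a reserve $S$ of size $o(n)$ gives you no way to later create those edges, since whether they can be played at the end still depends on those specific vertices being offered again, which happens only with constant (not high) probability over $O(n)$ rounds. Nothing in your sketch explains why the ``number of reroutings'' is $o(\Delta)n$, nor what a rerouting even accomplishes structurally.

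What the paper does instead (Sections~\ref{subsec:main}--\ref{subsec:thm_proof}) is keep the bijection $\varphi$ \emph{arbitrary and fixed} in Stage 1: by Lemma~\ref{lem:initial}, after $(\Delta/2+o(\Delta))n$ rounds with Strategy~\ref{strategy:initial}, a $\varphi$-good set $A$ covers all but an $\alpha n$-fraction of $V(H)$. Stage 2 then uses a \emph{role-switching} mechanism (Strategy~\ref{strategy}): for each bad vertex $b_i \in B$, \Builder reserves many disjoint candidate vertices $a_{i,1},\dots,a_{i,m} \in A$ (low-degree, non-adjacent to $B$, with pairwise-disjoint closed neighborhoods; Claim~\ref{claim:choice_of_a_{i,k}}). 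Whenever $\varphi(a_{i,k})$ or one of $\varphi(N_H(a_{i,k}))$ is offered, \Builder plays an edge that advances a potential swap of the roles of $\varphi(a_{i,k})$ and $\varphi(b_i)$; Lemma~\ref{lem:suff_condition_for_switch} gives a sufficient condition for a swap to complete, and Lemma~\ref{lem:prob_switch_event} lower-bounds its probability. The crucial point this captures — and your sketch does not — is that an abundance of interchangeable candidates $a_{i,k}$ makes it overwhelmingly likely that \emph{some} swap for $b_i$ succeeds, even though no individual vertex of $V(G)$ is likely to be offered enough times. Iterating over stages with shrinking $B$ (Lemma~\ref{lem:iteration}, Lemma~\ref{lem:main}) drives $|B|$ to $0$ in $(\Delta+o(\Delta))n$ additional rounds. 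To salvage your plan, you would need to replace the unspecified Hall-type online assignment and rerouting with a concrete swapping or backup mechanism of this kind, and show why it terminates w.h.p.\ in $O(\Delta n)$ rounds.
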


As before, the $o(\Delta)$ term here is a function of $\Delta$ satisfying $o(\Delta)/\Delta \rightarrow 0$ as $\Delta \to \infty$. Note that $\Delta$ is allowed to depend on $n$ arbitrarily. 
Theorem \ref{thm:main} answers Question \ref{question:Alon} in a strong sense: not only can any bounded-degree graph be constructed \whp~in a linear number of rounds, but in fact, the dependence on the maximum degree is very modest. This result clearly illustrates the power of semi-random algorithms compared to their truly random counterparts; see the discussion below on the appearance of various spanning structures in the random graph process.

The notion of \emph{competitive ratio} \cite{BE1998} refers to the performance of an online algorithm compared to the best offline algorithm for the same problem. 
In view of the trivial $\Delta n / 2$ lower bound and Proposition \ref{prop::offline}, our algorithm is $(3 + o_{\Delta}(1))$-competitive for general $H$, where the $o_{\Delta}(1)$ term tends to zero as $\Delta$ tends to infinity.  
As an open question, it will be very interesting to determine the optimal competitive ratio of an online algorithm for this problem.
\begin{problem} \label{prob:tight_bound}
	In the online version of the semi-random process on $n$ vertices, is it true that for every $n$-vertex graph $H$ of maximum degree $\Delta$, \Builder has a strategy to construct a copy of $H$ w.h.p.~in 
	$(\Delta/2 + o(\Delta)) n$ \nolinebreak rounds?
\end{problem}



We end this discussion with two additional problems, which ask for tight bounds on the number of rounds required to construct some specific graphs $H$ of particular interest.

\begin{problem}[Tight bounds for constructing a Hamilton cycle]\label{prob:Hamilton_cycle}
	Is there a number $\alpha_{\mathrm{Ham}}$ such that in the online version of the semi-random process on $n$ vertices, \Builder can w.h.p. construct a Hamilton cycle in $(\alpha_{\mathrm{Ham}} + o(1))n$ rounds, but w.h.p. cannot accomplish this in $(\alpha_{\mathrm{Ham}} - o(1))n$ rounds? If so, what is the value of $\alpha_{\mathrm{Ham}}$?
\end{problem}

\begin{problem}[Tight bounds for constructing a $K_r$-factor]\label{prob:clique_factor}
	For each $r \geq 2$, is there a number $\alpha_r$ such that in the online version of the semi-random process on $n$ vertices, for $n$ which is divisible by $r$, \Builder can w.h.p. construct a $K_r$-factor in $(\alpha_r + o(1))n$ rounds, but w.h.p. cannot accomplish this in $(\alpha_r - o(1))n$ rounds? If so, what is the value of $\alpha_r$?
\end{problem}

%

We note that Problem \ref{prob:clique_factor} is open even for $r = 2$, in which case \Builder's goal is to construct a perfect matching. Both the perfect matching problem and the Hamilton cycle problem were already considered in \cite{BHKPSS}, where some bounds for these problems were obtained. 

\paragraph{Constructing a bounded-degree spanning forest}

Theorem~\ref{thm:main} establishes that the (typical) number of rounds needed to construct a general spanning graph $H$ of maximum degree $\Delta$ is $O(\Delta n)$. 
This is clearly tight for graphs whose average degree is $\Theta(\Delta)$. It is now natural to ask if we can break the $\Theta(\Delta n)$ barrier for graphs with a much smaller average degree, such as trees. 
The next result answers this question positively.

\begin{theorem}\label{thm:trees}
	Let $\Delta,n > 0$ be integers and let $T$ be an $n$-vertex forest of maximum degree $\Delta$. In the online version of the semi-random process on $n$ vertices, \Builder has a strategy guaranteeing that \whp, after $O(n\log \Delta)$ rounds of the process, the constructed graph will contain a copy of \nolinebreak $T$.  
\end{theorem}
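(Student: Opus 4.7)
The plan is to embed $T$ via a BFS-based online procedure, augmented with a segment-merging trick inherited from the Hamilton-path construction of \cite{BHKPSS}. Root each component of the forest $T$ at an arbitrary vertex, and have \Builder maintain a partial embedding $\varphi$ of $V(T)$ into $V(G)$ that is always an ancestor-closed prefix of the BFS order. When the offered vertex $v$ is \emph{fresh} (not yet the image of any tree-vertex), \Builder takes the next unembedded child $x$ in the current round-robin schedule whose parent is already embedded, sets $\varphi(x) = v$, and adds the edge from $v$ to the parent's image. When $v = \varphi(y)$ for some embedded $y$ still awaiting an unembedded child $x$, \Builder picks any unused graph-vertex $u$, sets $\varphi(x) = u$, and adds the edge $\{v, u\}$. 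Otherwise \Builder adds an arbitrary junk edge.

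Let $L_\ell$ be the size of the $\ell$-th BFS level of $T$ and $a_\ell = L_0 + L_1 + \cdots + L_\ell$. The round-robin rule ensures that, throughout the embedding of level $\ell$, the number of ``active parents'' (vertices of $L_{\ell - 1}$ still missing at least one child) stays close to $L_{\ell - 1}$. Hence the probability that a given round is useful (progressing the embedding) is at least $(n - a_{\ell-1} - k)/n + L_{\ell - 1}/n$ after $k$ children of $L_\ell$ have already been embedded. Summing the resulting geometric waiting times and telescoping across levels shows that the BFS embedding takes $O(n \log (n/L_{\min}))$ rounds, where $L_{\min} = \min_\ell L_\ell$; this matches the target $O(n \log \Delta)$ bound whenever $L_{\min} \gtrsim n/\Delta$, which is the case for trees whose BFS from the chosen root is sufficiently ``bushy''.

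The main obstacle is handling \emph{narrow} parts of $T$, such as long degree-$\leq 2$ branches, where some $L_\ell$ may be as small as $1$ and the BFS analysis alone only yields $O(n \log n)$ rounds. To circumvent this, \Builder decomposes $T$ into its high-degree \emph{skeleton} (the subtree induced by the degree-$\geq 3$ vertices together with the cuts to adjacent vertices) and its degree-$\leq 2$ \emph{branches}. On the skeleton, the BFS strategy above uses $O(n \log \Delta)$ rounds. On each branch, \Builder runs the segment-merging strategy of \cite{BHKPSS} (which was originally designed for the Hamilton-cycle property), growing many short subpaths in parallel and concatenating them, at a combined cost of $O(n)$ rounds across all branches. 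The endpoints of each branch are pre-assigned to their adjacent skeleton vertices so that the two sub-strategies interface correctly, and a standard Chernoff concentration argument applied to the useful-round indicators upgrades the expected-time analysis to the \whp~guarantee stated in Theorem~\ref{thm:trees}.
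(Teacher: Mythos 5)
Your BFS-plus-decomposition plan has a genuine gap: the ``skeleton'' (the subgraph on degree-$\geq 3$ vertices, together with the adjacent cuts) is not in general bushy, so the BFS phase on it does not run in $O(n\log\Delta)$ rounds. A clean counterexample is a caterpillar: take a path $v_1-v_2-\cdots-v_k$ with a pendant leaf attached to each $v_i$. Every internal $v_i$ has degree $3$, so the entire spine lies in your skeleton, yet its BFS levels (even after including the adjacent leaves) have constant size, giving $L_{\min}=O(1)$ and a BFS cost of $\Theta(n\log n)=\Theta(n\log n /\log\Delta)\cdot\log\Delta$ rather than $O(n\log\Delta)$; when $k=\Theta(n)$ a naive BFS along the spine is in fact $\Theta(nk)$. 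Nothing in the proposal offloads this spine to the segment-merging machinery, because by construction those vertices are \emph{not} degree-$\leq 2$ vertices. Separately, the segment-merging idea from the Hamilton-cycle argument is being applied to paths with both endpoints pre-assigned to specific skeleton images; that is a substantially more constrained task than building a single unconstrained Hamilton path, and the proposal does not explain why merging still works when each segment must ultimately land in a particular branch with prescribed endpoints.

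The paper's route is quite different and sidesteps all structural decomposition of $T$. It exploits only the fact that a forest has at least $n/2$ vertices of degree at most $2$: remove a set $B$ of $\alpha n$ such vertices (with $\alpha=\Theta(\Delta^{-5})$), embed $T'=T[V\setminus B]$ greedily via a coupon-collector argument in $O(n\log(1/\alpha))=O(n\log\Delta)$ rounds (Lemma~\ref{lem:greedy_embedding}), and then invoke the role-switching Lemma~\ref{lem:main} with degeneracy $D=1$ and $d=2$, which repairs the missing $\alpha n$ low-degree vertices in only $O(n)$ additional rounds. The crucial point you would need, and which your skeleton/branch split does not supply, is a reason why the ``hard residue'' of unembedded vertices consists entirely of \emph{low-degree} vertices; the paper arranges this by choosing $B$ to be low-degree from the outset, rather than by decomposing $T$ into high-degree and low-degree parts.
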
  

The next proposition shows that the dependence on $n$ and $\Delta$ in Theorem \ref{thm:trees} is tight even for the offline version of the semi-random process.
\begin{proposition} \label{prop::offlineTree}
	For every $\Delta \geq 1$ and for every $n \geq n_0(\Delta)$, there exists a forest $T$ with $n$ vertices and maximum degree $\Delta$ satisfying the following. In the offline version of the semi-random graph process, \whp~\Builder needs 
	$\Omega(n\log \Delta)$ rounds in order to construct a copy of $T$.   
\end{proposition}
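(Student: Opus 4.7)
The plan is to take $T$ to be the vertex-disjoint union of $s := \lfloor n/(\Delta+1) \rfloor$ copies of $K_{1,\Delta}$, together with at most $\Delta$ isolated vertices. If $\Delta$ is bounded by an absolute constant, then $\Omega(n\log \Delta) = \Omega(n)$ follows from the trivial bound $|E(T)| = \Theta(n)$ (one round is needed per edge), so we focus on $\Delta$ larger than some absolute constant. Set $m := \tfrac{1}{10}\, n \log \Delta$ and $c := m/n = \tfrac{1}{10}\log \Delta$, and write $X_v$ for the number of rounds in which vertex $v$ is offered. The goal is to show that w.h.p.\ \Builder cannot construct $T$ within $m$ rounds.

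First, I would invoke the offline winning criterion of \cite[Proposition~4.1]{BHKPSS} (also used in the proof of Proposition~\ref{prop::offline}): \Builder succeeds if and only if there exist an injection $\varphi : V(T) \to V$ and an orientation of $T$ with $d^+(u) \leq X_{\varphi(u)}$ for every $u$. The combinatorial core is a per-star bound. Consider a star with centre $c$ and leaves $l_1,\dots,l_\Delta$: each leaf has out-degree $0$ or $1$, and the leaves of out-degree $0$ are exactly those whose edge is oriented out of the centre, so $d^+(c) = |\{j : d^+(l_j) = 0\}|$. A leaf $l_j$ mapped to a zero-load vertex is forced to have $d^+(l_j) = 0$, and therefore $|\{j : X_{\varphi(l_j)} = 0\}| \leq d^+(c) \leq X_{\varphi(c)}$. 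Summing over the $s$ stars, and noting that at most $s$ zero-load vertices can be mapped to a centre, we obtain
\[
Z_0 - s \;\leq\; \sum_{i=1}^{s} X_{\varphi(c_i)} \;\leq\; S_s,
\]
where $Z_0 := |\{v \in V : X_v = 0\}|$ and $S_s$ is the sum of the $s$ largest values among $(X_v)_{v \in V}$. Thus the inequality $S_s \geq Z_0 - s$ is necessary for \Builder's success.

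It remains to show that this necessary condition fails w.h.p. The variable $Z_0$ is a sum of negatively correlated $\{0,1\}$-indicators with expectation $(1+o(1))\, n\mathrm{e}^{-c} = (1+o(1))\, n/\Delta^{1/10}$; its variance is bounded by its mean, so Chebyshev's inequality gives $Z_0 \geq \tfrac{1}{2}\, n/\Delta^{1/10}$ w.h.p. To bound $S_s$, I would use the layered identity $S_s = \sum_{k \geq 1} \min(s, Y_k)$ with $Y_k := |\{v : X_v \geq k\}|$, and split at the threshold $K := \lceil \log \Delta \rceil$:
\[
S_s \;\leq\; K s + \sum_{k > K} Y_k \;=\; K s + \sum_{v \in V} (X_v - K)^+.
\]
The first term is at most $n\log \Delta / \Delta$. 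For the second, the Chernoff bound $\Pr[X_v \geq k] \leq (\mathrm{e}c/k)^k$ valid for $k > c$ yields, after a short tail computation, $\mathbb{E}\bigl[\sum_v (X_v-K)^+\bigr] = O(nc/\Delta) = O(n\log \Delta / \Delta)$; moreover, $\sum_v (X_v - K)^+$ is an $O(1)$-Lipschitz function of the $m$ independent random offers, so Azuma--Hoeffding yields a deviation $O(\sqrt{m}\log n) = o(n/\Delta^{1/10})$ w.h.p.\ (using $\Delta \leq n$). Combining, $S_s = O(n\log\Delta / \Delta)$ w.h.p.

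Since $\Delta^{9/10}/\log \Delta \to \infty$, we conclude $Z_0 - s \geq \tfrac{1}{2} n/\Delta^{1/10} - n/\Delta \gg n\log \Delta / \Delta \geq S_s$ w.h.p., which contradicts the necessary condition above and establishes the $\Omega(n \log \Delta)$ lower bound. The main technical hurdle is the concentration estimate for $S_s$: the trivial deterministic bound $S_s \leq m$ only rules out constant $c$ and hence misses the logarithmic factor. The argument crucially exploits the rapid decay of the Poisson-style upper tail $(\mathrm{e}c/k)^k$ at $k = \Theta(\log \Delta)$, well above the mean $c = \tfrac{1}{10}\log \Delta$, to keep $S_s$ of order $n\log\Delta/\Delta$, which is much smaller than the typical size $n/\Delta^{1/10}$ of $Z_0$.
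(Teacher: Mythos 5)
Your proof is correct and is built on the same core idea as the paper's: it takes $T$ to be the same disjoint union of $(\Delta+1)$-vertex stars, fixes the same $m = \Theta(n\log\Delta)$, and argues that the per-star orientation constraint forces either the image of a center or nearly all images of leaves to have been offered, then refutes both possibilities by concentration of the load vector. The one genuine technical difference is in how you aggregate the per-star constraint. The paper uses a dichotomy with threshold $\sqrt{\Delta}$ for each star (center offered $\geq\sqrt{\Delta}$ times, or $\geq \Delta - \sqrt{\Delta}$ leaves offered at least once) and then bounds the count of vertices offered $\geq\sqrt{\Delta}$ times and the count of vertices not offered at all. You instead sum the per-star inequalities into the single necessary condition $Z_0 - s \leq S_s$ and bound the top-$s$-loads sum $S_s$ via the layered identity with a split at $K = \lceil\log\Delta\rceil$. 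Your route is somewhat more uniform and avoids the somewhat ad hoc choice of $\sqrt{\Delta}$, but costs a slightly more elaborate tail estimate for $\sum_v(X_v-K)^+$ (Chernoff tail followed by Azuma). Both arguments use the same concentration machinery (Chernoff for the load tails, Azuma for the aggregate count), and both split off the bounded-$\Delta$ case trivially. Two small cosmetic points: your inequality should read $Z_0 - s - \Delta \leq S_s$ to account for the isolated vertices of $T$ (harmless, since $\Delta \leq s$ for $n$ large), and in the layered bound the Lipschitz constant of $\sum_v(X_v-K)^+$ is $2$ rather than $1$ (changing one offer moves two coordinates of the load vector), which also does not affect the conclusion.
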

\noindent
We conclude this section by proposing the following problem:
\begin{problem}\label{prob:degenerate}
	How many rounds are required to construct $n$-vertex $d$-degenerate graphs of maximum degree \nolinebreak $\Delta$?
\end{problem}
\noindent
An answer to Problem \ref{prob:degenerate} would generalize Theorem \ref{thm:trees}, as forests are exactly the $1$-degenerate graphs. 

\paragraph{Non-adaptive strategies}
It is fairly natural to inquire whether imposing the restriction of {\em non-adaptivity} handicaps Builder, and if so, to which extent exactly. Here, by non-adaptivity we mean that, in a sense, \Builder's choices are decided upon beforehand, and do not depend on the situation at any given round of the process. The precise definition that we use is as follows. A {\em non-adaptive strategy} consists of a family ${\cal L}$ of adjacency lists ${\cal L}=\{L^w: w \in [n]\}$, where for each $w \in [n]$, the list $L^w=(L^w(i): i=1,\ldots,n-1)$ is a permutation of $[n]\setminus \{w\}$. The lists ${\cal L}=\{L^w: w \in [n]\}$ are specified {\em in advance} (i.e., before the sequence of random vertices starts being exposed). Playing according to such a strategy means that during the vertex exposure process $w_1,w_2,\ldots$, if in a given round vertex $w$ appears for the $i$th time, $i\ge 1$, then \Builder is obliged to connect $w$ to the $i$th vertex $L^w(i)$ on its list. (To avoid ambiguities, let us assume that if $w$ has already been connected to $L^w(i)$, then \Builder simply skips his move. This assumption will not change much \nolinebreak in \nolinebreak our \nolinebreak analysis.) 


It turns out that the non-adaptivity assumption indeed hampers Builder --- it takes him typically $\Omega(n\sqrt{\log n})$ rounds to get rid of isolated vertices, as stated in Theorem \ref{prop::non-adaptive_isolated_vertices} below. This is in rather sharp contrast with the situation for general (i.e. adaptive) strategies. Indeed, it is easy to see that \Builder can construct a connected graph in $n-1$ rounds (with probability $1$); constructing a graph with no isolated vertices can be done w.h.p. even faster, in $(\ln 2 + o(1))n$ rounds (see \cite{BHKPSS}); and finally, Theorem \ref{thm:main} shows that in fact every bounded-degree graph can be constructed in $O(n)$ rounds. 
The three theorems below are for the online version of the semi-random graph process.

\begin{theorem}\label{prop::non-adaptive_isolated_vertices}
	In the semi-random process on $n$ vertices, any non-adaptive strategy requires w.h.p. $\Omega(n \sqrt{\log n})$ rounds to construct a graph in which none of the $n$ vertices is isolated. 
\end{theorem}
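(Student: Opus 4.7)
The plan is to fix an arbitrary non-adaptive strategy $\mathcal{L} = \{L^w\}_{w \in [n]}$, set $t = c n\sqrt{\log n}$ for a sufficiently small constant $c > 0$, and let $\lambda := t/n = c\sqrt{\log n}$. Writing $\pi_w(v) \in [n-1]$ for the position of $v$ in the list $L^w$, the non-adaptive rule immediately yields the following \emph{isolation criterion}: denoting by $N_w$ the number of times $w$ is offered during the $t$ rounds, a vertex $v$ is isolated after $t$ rounds if and only if $N_v = 0$ and $N_w < \pi_w(v)$ for every $w \neq v$. Let $Y$ denote the number of isolated vertices after round $t$; I will show that $\mathbb{P}(Y = 0) = o(1)$ for every such $\mathcal{L}$.

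To exploit independence, I would pass to the \emph{Poissonized process} in which the total number of rounds is $T \sim \mathrm{Poi}(t)$, so that the counts $\tilde{N}_w$ become i.i.d.\ $\mathrm{Poi}(\lambda)$; let $\tilde{Y}$ count the isolated vertices in this model. Setting $p(r) := \mathbb{P}(\mathrm{Poi}(\lambda) \leq r)$, the isolation criterion gives $\mathbb{P}(v \text{ isolated}) = e^{-\lambda}\prod_{w \neq v}p(\pi_w(v)-1)$. Applying AM-GM together with the key permutation identity
\[
\sum_{v}\sum_{w \neq v} \log p(\pi_w(v) - 1) \;=\; n \sum_{r=0}^{n-2} \log p(r),
\]
which holds because $\pi_w$ is a permutation of $[n]\setminus\{w\}$ for each fixed $w$, yields $\mathbb{E}[\tilde{Y}] \geq n\, e^{-\lambda} \prod_{r=0}^{n-2} p(r)$. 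Standard Poisson tail estimates give $\sum_{r=0}^{\infty}(-\log p(r)) = O(\lambda^2)$: the dominant contribution comes from $r \leq \lambda$, where the bound $p(r) \geq e^{-\lambda}\lambda^r/r!$ shows $-\log p(r) \leq \lambda$, and the tail $r > \lambda$ contributes only $O(\sqrt{\lambda})$. Hence $\mathbb{E}[\tilde{Y}] \geq n^{\,1 - O(c^2) - o(1)}$, which tends to infinity for $c$ sufficiently small.

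For concentration I would apply Efron--Stein. Since the $\tilde{N}_w$ are independent, and changing any single $\tilde{N}_w$ by $\pm 1$ can flip the isolation status of at most two vertices (namely $L^{w}(\tilde N_{w}+1)$, and the vertex $w$ itself when $\tilde{N}_w$ crosses $0$), the Poincaré inequality for the Poisson distribution, $\mathrm{Var}(g(X)) \leq \lambda\cdot \mathbb{E}[(g(X+1)-g(X))^2]$ for $X \sim \mathrm{Poi}(\lambda)$, gives $\mathrm{Var}(\tilde{Y} \mid (\tilde{N}_u)_{u \neq w}) \leq 4\lambda$ for each $w$, and hence $\mathrm{Var}(\tilde{Y}) \leq 4n\lambda$ by Efron--Stein. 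Chebyshev's inequality then yields $\mathbb{P}(\tilde{Y} = 0) \leq \mathrm{Var}(\tilde Y)/\mathbb{E}[\tilde{Y}]^2 = O(n\lambda)/n^{2-O(c^2)} = o(1)$ for $c$ small. To de-Poissonize I would use monotonicity of $Y$ in the number of rounds: on the event $\{T \geq t\}$, which has probability at least $\tfrac{1}{2} - o(1)$ since $T \sim \mathrm{Poi}(t)$, the first $t$ rounds of the Poissonized process form a faithful copy of the multinomial process, and any vertex that is isolated after those $t$ rounds remains isolated after all $T$ rounds; this gives $\mathbb{P}(Y = 0) \leq \mathbb{P}(\tilde Y = 0)/\mathbb{P}(T \geq t) = o(1)$.

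The main technical hurdle is the expectation bound: one needs to track the constants in $\sum_r (-\log p(r)) = O(\lambda^2)$ sharply enough that the exponent $1 - O(c^2)$ of $\mathbb{E}[\tilde Y]$ is bounded away from $0$ for an explicit small $c > 0$, ensuring that $\mathbb{E}[\tilde Y]^2$ dominates the variance $O(n\sqrt{\log n})$ by a polynomial margin. Once this calibration is in place, the Efron--Stein variance estimate and the Chebyshev/de-Poissonization steps are essentially routine.
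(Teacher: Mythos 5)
Your argument is correct and reaches the theorem, but by a route that is genuinely different from the paper's. The paper proves the lower bound by a purely combinatorial first‑moment / witness argument: it passes to the set $Z$ of vertices not offered in the first $t$ rounds, controls a small exceptional set $W$ (vertices reachable deep inside the lists by high‑multiplicity vertices $U$), and then, by exhibiting $\Omega(n/\log n)$ pairwise‑disjoint ``witness sets'' $B_i = A_{u_i}\cup\{u_i\}$ of size $O(\sqrt{\log n})$ and coupling $Z$ with a binomial random subset, shows that w.h.p.\ some $B_i\subseteq Z$ and hence some vertex stays isolated. You instead Poissonize, observe that isolation of $v$ is equivalent to $N_v=0$ and $N_w<\pi_w(v)$ for all $w$, compute $\mathbb E[\tilde Y]=e^{-\lambda}\sum_v\prod_{w\ne v}p(\pi_w(v)-1)$, and use AM--GM together with the identity $\prod_v\prod_{w\ne v}p(\pi_w(v)-1)=\bigl(\prod_{r=0}^{n-2}p(r)\bigr)^n$ (valid because each $\pi_w$ is a permutation) to get a strategy‑independent lower bound $\mathbb E[\tilde Y]\ge n e^{-\lambda}\prod_r p(r)\ge n^{1-O(c^2)}$; then Poisson--Poincar\'e + Efron--Stein give $\mathrm{Var}(\tilde Y)\le 4n\lambda$, and Chebyshev plus de‑Poissonization close the argument. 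What your version buys is a cleaner, strategy‑uniform lower bound on $\mathbb E[\tilde Y]$ without having to construct disjoint witness families, and a second‑moment concentration step that sidesteps the paper's two‑round exposure and binomial coupling; the price is the reliance on Poissonization and the Poisson Poincar\'e inequality, which are heavier analytic inputs than the elementary Chernoff/Azuma estimates the paper uses. Both proofs give $\Omega(n\sqrt{\log n})$ with comparable (unoptimized) constants.

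One small verbal slip in the de‑Poissonization: you wrote that on $\{T\ge t\}$ ``any vertex that is isolated after those $t$ rounds remains isolated after all $T$ rounds,'' but the monotonicity runs the other way --- a vertex isolated after $T\ge t$ rounds was already isolated after the first $t$, so $\tilde Y\le Y$ on $\{T\ge t\}$; combined with the independence of $Y$ and $T$ this gives exactly the inequality $\mathbb P(Y=0)\le \mathbb P(\tilde Y=0)/\mathbb P(T\ge t)$ that you then use, so the conclusion stands. Also, when invoking the Poincar\'e inequality, it is worth noting explicitly that $\tilde Y$ is a deterministic function of $(\tilde N_1,\dots,\tilde N_n)$ (immediate from the isolation criterion), so that changing one coordinate by $\pm1$ flips at most the status of $L^w(\tilde N_w+1)$ and of $w$ itself at the $0\leftrightarrow1$ boundary, giving the $4\lambda$ conditional variance bound.
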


It turns out that the lower bound of Theorem \ref{prop::non-adaptive_isolated_vertices} is tight in a strong sense:  
by executing an appropriate non-adaptive strategy for $O(n \sqrt{\log n})$ rounds, \Builder can construct w.h.p. several important spanning structures, such as a Hamilton cycle and a $K_r$-factor (for fixed $r$). This is expressed in the following two theorems.

\begin{theorem}\label{prop::non-adaptive_Hamilton_cycle}
	In the semi-random process on $n$ vertices, there is a non-adaptive strategy allowing \Builder to construct a Hamilton cycle in $8n\sqrt{\log n}$ rounds w.h.p.   
\end{theorem}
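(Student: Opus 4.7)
The plan is to design lists $L^w$ based on a fixed pseudorandom $k$-regular graph $H^*$ on $[n]$, with $k = \lceil \sqrt{\log n}\, \rceil$. Concretely, I would fix an $H^*$ that is a good expander---say an $(n,k,\lambda)$-graph with $\lambda = O(\sqrt{k})$, obtained either as a typical random $k$-regular graph or via an explicit Ramanujan-type construction. For each $w \in [n]$, define $L^w$ so that its first $k$ entries are the $k$ neighbors of $w$ in $H^*$ (in an arbitrary fixed ordering), and complete $L^w$ to a permutation of $[n] \setminus \{w\}$ in any way.

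Let $X_w$ denote the number of times $w$ is offered during $R = 8n\sqrt{\log n}$ rounds, so $X_w \sim \textrm{Bin}(R, 1/n)$ with $\mathbb{E}[X_w] = 8\sqrt{\log n}$. Set $B := \{w : X_w < k\}$. By Chernoff's inequality, $\Pr[w \in B] \leq e^{-c\sqrt{\log n}}$ for some absolute constant $c > 0$, so $\mathbb{E}[|B|] \leq n e^{-c\sqrt{\log n}}$; applying McDiarmid's inequality to $|B|$ (which changes by at most $2$ when a single round is altered) yields $|B| \leq n e^{-(c/2)\sqrt{\log n}}$ \whp. The key structural observation is that any edge $\{u,v\} \in E(H^*)$ appears in the constructed graph $G$ unless both $u, v \in B$: if $u \notin B$ then $X_u \geq k \geq p_u(v)$ and the edge is added from $u$'s side (and symmetrically for $v$). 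Therefore $G \supseteq H^* \setminus E(H^*[B])$.

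It then suffices to show that $H^* \setminus E(H^*[B])$ contains a Hamilton cycle \whp. For this, I would rely on the expander structure of $H^*$. By the expander mixing lemma applied to the (very small) set $B$, the induced subgraph $H^*[B]$ has very few edges, and almost every vertex in $B$ has only $o(k)$ neighbors inside $B$. Using P\'osa's rotation-extension technique---or a Krivelevich--Sudakov type robust Hamiltonicity result for expander graphs---one can then verify that $H^* \setminus E(H^*[B])$ retains the P\'osa expansion condition ($|N(S) \setminus S| \geq 2|S|$ for all $|S| \leq n/4$) and is hence Hamiltonian.

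The main obstacle is handling the rare event that a few vertices $v \in B$ have many, or even all, of their $H^*$-neighbors also in $B$, and thus essentially lose all their edges in $G$. For a random $k$-regular $H^*$, the expected number of such ``doubly bad'' vertices can be bounded by $|B| \cdot (|B|/n)^{k - o(k)}$, which is negligible and rules out isolated vertices (in line with the tight lower bound of Theorem~\ref{prop::non-adaptive_isolated_vertices}). A cleaner, more robust alternative is to reserve a second block of positions $k+1, \ldots, 2k$ in each list for the neighborhood structure of an independent expander $H^{**}$, providing redundancy: a vertex $v$ fails only if it is not offered $k$ times \emph{and} all its neighbors in \emph{both} expanders are similarly deficient, an event of vanishingly small probability.
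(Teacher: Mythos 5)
Your proposal takes a genuinely different route from the paper's, and the final Hamiltonicity step has a real gap.

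The paper's argument is much more local and elementary: it partitions $[n]$ into $k=n/\sqrt{\log n}$ blocks $V_1,\dots,V_k$ of size about $\sqrt{\log n}$ each, and for $v\in V_i$ it builds the list $L^v$ by listing all of $V_i\setminus\{v\}$ first, then all of $V_{i+1}$, then the rest. A Chernoff estimate shows that within each block, more than half the vertices are offered at least $|V_i|$ times, so each such vertex becomes adjacent to \emph{everything} in its own block as well as to two vertices of the next block. Consequently every $G[V_i]$ has minimum degree exceeding $|V_i|/2$, which by Ore's theorem makes it Hamilton-connected; Hamilton paths inside the blocks are then stitched together using the cross-block edges. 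The point is that the dense-block design guarantees that even a vertex never offered at all receives many incident edges from its block-mates, so the Hamiltonicity reduces to a clean, deterministic Dirac/Ore-type statement inside each small block.

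Your approach replaces this with a single global pseudorandom $k$-regular graph $H^*$, $k=\lceil\sqrt{\log n}\,\rceil$, placed at the top of every list. The observation that the built graph contains $H^*\setminus E(H^*[B])$, where $B$ is the set of vertices offered fewer than $k$ times, is correct and clean. The problem is the last step, which is really the whole difficulty: showing that $H^*\setminus E(H^*[B])$ is Hamiltonian. You invoke Krivelevich--Sudakov-type robust Hamiltonicity for $(n,d,\lambda)$-graphs and P\'osa rotation--extension, but neither applies off the shelf in this degree regime. The Krivelevich--Sudakov theorem on sparse pseudorandom graphs requires roughly $\lambda\lesssim d\cdot\mathrm{polyloglog}(n)/\log n$; with $d=\sqrt{\log n}$ and the Alon--Boppana barrier $\lambda=\Omega(\sqrt d)=\Omega((\log n)^{1/4})$, this forces $d\gtrsim\log^2 n$, which your $d$ is nowhere near. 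Verifying the P\'osa condition $|N(S)\setminus S|\ge 2|S|$ for $H^*\setminus E(H^*[B])$ directly is also delicate: for moderate-sized $S\subseteq B$ the expander mixing lemma bound on $|N_{H^*}(S)\cap B|$ is dominated by the $\lambda\sqrt{|S|\,|B|}$ term and is useless, while a per-set union bound has to cope simultaneously with the randomness of $B$ and with $\binom{n}{s}$ choices of $S$ across an intermediate size range where neither estimate is clearly adequate. Your discussion of ``doubly bad'' vertices addresses only the much weaker requirement of having no isolated vertex, not the expansion needed for Hamiltonicity, and the proposed two-expander redundancy again only helps with isolated vertices. So the sketch is plausible in spirit but leaves the central Hamiltonicity assertion unproved; the paper's block construction was chosen precisely to sidestep this by making the dense-graph Hamiltonicity question trivial (Ore) inside each block.
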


\begin{theorem}\label{prop::non-adaptive_clique_factor}
	For every $r \geq 2$ there is $C = C(r)$ such that for every $n$ which is divisible by $r$, there is a non-adaptive strategy allowing \Builder to construct a $K_r$-factor in $C n \sqrt{\log n}$ rounds w.h.p.
\end{theorem}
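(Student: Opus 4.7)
The plan is to design a non-adaptive strategy that partitions $[n]$ into blocks of size $\Theta(\sqrt{\log n})$, on each of which the process will internally build a $K_r$-factor with probability high enough to survive a union bound over all blocks. Concretely, I would fix constants $s(r) > C(r) > 0$ with $r \mid s(r)$, set $s = s(r) \sqrt{\log n}$ and $t = C(r) n \sqrt{\log n}$, and partition $V = [n]$ into $m = n/s$ blocks $B_1, \ldots, B_m$ of size $s$ each (adjusting boundary effects by choice of constants). For each $u \in B_j$, I take $L^u$ to begin with an independently sampled uniformly random permutation of $B_j \setminus \{u\}$ in the first $s-1$ positions, and complete it arbitrarily.

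The first step of the analysis is to verify that the resulting graph $G_t$ is essentially a disjoint union of its block restrictions: since $X_u \sim \mathrm{Bin}(t, 1/n)$ has mean $C(r)\sqrt{\log n}$ and $s(r)$ is taken sufficiently larger than $C(r)$, Chernoff gives $\Pr[X_u \geq s-1] = o(1)$, so with high probability every edge incident to $u$ lies inside $B_j$. The second step is to analyze the block subgraph $H_j := G_t[B_j]$: conditional on the $X_u$'s, each $u \in B_j$ chooses $\min(X_u, s-1)$ uniformly random partners inside $B_j$, so $H_j$ is dense, and for a suitable choice of constants its minimum degree exceeds $(1 - 1/r)s$, at which point a Hajnal--Szemer\'edi-type argument produces a $K_r$-factor inside $B_j$. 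The third step is a union bound over the $m$ blocks.

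The hard part will be making the per-block failure probability small enough for this union bound to close. A direct Chernoff tail for $X_u$ yields only $\exp(-\Omega(\sqrt{\log n}))$, which is too weak to beat $m = \Theta(n/\sqrt{\log n})$. To bridge this gap, I would augment each list with a backup segment: after the first $s-1$ block-permutation positions, reserve an additional $O(\sqrt{\log n})$ positions in $L^u$ pointing into a designated neighboring block, so that vertices in rare ``uncooperative'' blocks can be re-paired into $K_r$'s spanning the block boundary. A Hall-type matching argument in the auxiliary bipartite graph between uncovered vertices and surplus $K_{r-1}$'s in the neighboring blocks should then patch these defects with probability $1 - o(1)$. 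The most delicate point is showing that the expected backup demand is bounded, so that the combined strategy still fits within $C(r) n \sqrt{\log n}$ rounds and no single neighboring block is overloaded; I expect this to be handled by an amortization argument showing that uncooperative blocks are sparse enough across the partition.
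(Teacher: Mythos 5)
Your basic scaffolding (blocks of size $\Theta(\sqrt{\log n})$, lists beginning with the own block, Hajnal--Szemer\'edi on the block-induced subgraphs) is the same as the paper's, but you have set up the constants the wrong way around, and this is what lands you in the difficulty you flag at the end. You take $s(r) > C(r)$ so that the block size exceeds the expected number of appearances per vertex; with that choice each block-induced subgraph looks like a sparse random $k$-out graph, and you are stuck with a per-vertex ``too few appearances'' event of probability only $e^{-\Omega(\sqrt{\log n})}$, which indeed cannot be union-bounded over $\Theta(n/\sqrt{\log n})$ blocks. Your proposed cure --- backup segments pointing into a neighboring block, followed by a Hall-type patching argument for the defective vertices --- is precisely the hard part, and it is left as an unproved sketch with the crucial amortization step merely asserted. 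As written, this is a genuine gap.

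The paper chooses the constants in the opposite direction: the number of rounds per vertex is \emph{large} compared to the block size (i.e.\ $t/n \gg |V_i|$), so that the typical vertex exhausts the block prefix of its list and hence becomes adjacent to \emph{all} of its block. With this choice the in-block subgraph is not a sparse random graph; it is a complete graph minus the footprint of the light vertices. The minimum degree condition for Hajnal--Szemer\'edi then becomes: fewer than $|V_i|/r$ vertices of the block appear few times. This is a \emph{joint} event on $\Theta(\sqrt{\log n}/r)$ vertices, and its probability is bounded (union over the $\binom{s_{\max}}{s_{\min}/r}$ candidate bad sets, then Chernoff on the \emph{total} number of appearances of that set) by $n^{-\Omega(C/r)+o(1)}$, which for large $C$ beats the union bound over $\Theta(n/\sqrt{\log n})$ blocks. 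In short: the $1-1/r$ slack in Hajnal--Szemer\'edi is exactly what lets one tolerate $\Theta(\sqrt{\log n})$ light vertices per block, and the failure probability of \emph{that many} simultaneous light vertices is polynomially small, not stretched-exponentially small. That observation replaces your entire backup/patching mechanism with a single Chernoff estimate, and also makes the ``edges stay inside the block'' worry moot, since it does not matter where the extra edges of the heavy vertices go.
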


We remark that while the above definition of non-adaptivity is deterministic in nature (in the sense that the lists $\{L^w : w \in [n]\}$ are predetermined), the proof of Theorem \ref{prop::non-adaptive_isolated_vertices} can be easily adapted (with the same asymptotic lower bound) to apply also to
``random non-adaptive strategies", i.e. strategies in which for every $w \in [n]$ and $i \geq 1$, the vertex which \Builder connects to $w$ at the $i$th time that $w$ is sampled is drawn from some predetermined probability distribution on $[n] \setminus \{w\}$.  


\paragraph{The situation in the (``purely"-)random graph process}
A common theme in our results is that introducing ``intelligent choices" into a random setting can allow for a dramatic improvement. (Perhaps the first appearance of this theme is in the aforementioned work of Azar et al. \cite{ABKU2000}.)
To illustrate this phenomenon in the setting of random graph processes, let us compare our Theorem \ref{thm:main} with the situation in which all edge-choices are made completely randomly. 
Recall that the {\em random graph process} $\tilde{G} = (G_m)_{m=0}^{N}$ is defined by choosing a random permutation $e_1,\dots,e_N$ of all $N = \binom{n}{2}$ edges of $K_n$, and letting $G_m$ be the graph whose edge-set is $\{e_1,\dots,e_m\}$. We refer the reader to \cite{FK,JLR} for an overview of this classical object, as well as the standard random graph models $G(n,m)$ and $G(n,p)$, which are mentioned below. Note that for each $0 \leq m \leq \binom{n}{2}$, the graph $G_m$ is distributed as the Erd\H{o}s-R\'enyi graph $G(n,m)$, i.e. as a random graph chosen uniformly among all graphs with $m$ edges and $n$ (labeled) vertices. 
 
The appearance of bounded-degree spanning graphs in the random graph process (or equivalently, in $G(n,m)$) has been thoroughly investigated\footnote{We note that the results surveyed here were actually proved for the binomial random graph $G(n,p)$, which is the graph obtained by selecting each of the $\binom{n}{2}$ edges of $K_n$ with probability $p$ and independently. It is well known --- see e.g. \cite[Section 1.4]{JLR} or \cite[Section 1.1]{FK} --- that $G(n,m)$ is closely related to $G(n,p)$ when $p = m/\binom{n}{2}$, allowing the transfer of results between the two models.}.
For starters, a standard first moment argument (see e.g. \cite{JLR}) shows that a copy of $K_{\Delta+1}$ only appears in the random graph process after roughly $n^{2-2/\Delta}$ rounds. 
Thus, obtaining even a single $K_{\Delta+1}$-copy (let alone a $K_{\Delta+1}$-factor) requires much more than a linear number of rounds. 
Determining the typical time of the appearance of a $K_{\Delta+1}$-factor turned out to be a difficult problem. 
Following a long line of research, this problem was settled by (a special case of) a celebrated result by Johansson, Kahn and Vu \cite{JKV}, which states that 
a $K_{\Delta+1}$-factor appears in the random graph process at around $m = n^{2-2/(\Delta+1)} (\log n)^{1 / \binom{\Delta+1}{2}}$. A more general discussion on the appearance of bounded-degree graphs $H$ other than clique-factors (and on the closely-related notion of \emph{universality}) can be found in the recent work of Ferber, Kronenberg, and Luh \cite{FKL}; see in particular Conjecture 1.5 there.

Similar superlinear lower bounds (on the number of edges required in order to typically contain a $K_{\Delta+1}$-factor, or even a single $K_{\Delta+1}$) are known or can be shown for various other random graph models, such as the random regular graph, or the model $G_{k\text{-}out}$ (where one connects each vertex to exactly $k$ other randomly chosen vertices, discarding repetitions).
Thus, while it was shown in \cite{BHKPSS} that the semi-random graph process can \emph{simulate} the random models $G(n, p)$, $G(n, m)$, and $G_{k\text{-}out}$, the above discussion indicates that these cannot help in solving Question \ref{question:Alon}, and we must utilize the power of the intelligent player, \Builder, in a more imaginative way.

A similar comparison can be made between Theorem \ref{thm:trees} and the emergence of (bounded-degree) spanning trees in the random graph process. 
It is well known that in the random graph process, w.h.p. the last isolated vertex disappears only at around $m = \frac{1}{2}n\log n$. Thus, a superlinear number of rounds is required in order to contain a spanning tree w.h.p. Again, we observe here that intelligent choices speed up the time required to reach the given goal: it takes $\Theta(n \log n)$ rounds for the random graph process to contain even a single spanning tree, whilst in our semi-random graph process, the number of rounds required to contain 
a prescribed bounded-degree spanning tree is only $O(n)$.   

It is worth mentioning that a recent breakthrough of Montgomery \cite{Montgomery}, which confirms a conjecture of Kahn \cite{KLW}, shows that for a fixed $\Delta$, w.h.p. all spanning trees with maximum degree $\Delta$ appear in $G_m$ after $m = C n \log n$ rounds (where $C = C(\Delta)$ is a large enough constant). 
We refer the reader to \cite{Montgomery} for further references to many other related works on this subject.

\subsection{Related Work : Semi-Random Processes}
\label{subsec:other_semi_random}
Perhaps the most studied semi-random graph process is now known as the \emph{Achlioptas process}, and was proposed by Dimitris Achlioptas in 2000. Similarly to our semi-random process, the Achlioptas process is a one-player game in which the player, \Builder, gradually constructs a graph.
The process runs in rounds, where in each round, two uniformly random edges are picked from the set of all $\binom{n}{2}$ edges of the $n$-vertex complete graph, or alternatively (depending on the version of the process) from all edges untaken at this point. These two edges are offered to \Builder, who then must choose exactly one of them and add it to the graph. 

While in our random graph process \Builder's goal is always to make his graph {\em satisfy} some given graph property, in the context of the Achlioptas process the goal is often to {\em avoid} satisfying a given property for as long as possible. In fact, Achlioptas's original question was whether \Builder can delay the appearance of a giant component beyond its typical time of appearance in the (``purely"-)random graph process. 
This question was answered positively by Bohman and Frieze \cite{BF}, see also \cite{Achlioptas2009,BK,GPS,riordan2012,SW}. Similar problems have also been studied for other properties or objectives; for example, the problem of avoiding a fixed subgraph \cite{KLohS,MST}, or the problem of speeding up the appearance of a Hamilton cycle \cite{KLS}. Achlioptas-like processes involving two choices were also investigated in other contexts, see e.g.~\cite{muller2015} for a geometric perspective. 

\subsection{Paper Organization and Notation} 
In Section~\ref{sec:preliminary} we state several auxiliary results.  Section \ref{sec:main} contains the proofs of Theorem \ref{thm:main} and Proposition~\ref{prop::offline}, as well as the description and analysis of Strategy~\ref{strategy}, which is the key tool used in our proofs. Theorem~\ref{thm:trees} and Proposition~\ref{prop::offlineTree} are proved in Section~\ref{sec:trees}. Finally, the proofs of Theorems \ref{prop::non-adaptive_isolated_vertices}, \ref{prop::non-adaptive_Hamilton_cycle} and \ref{prop::non-adaptive_clique_factor} appear in Section \ref{sec:non_adaptive}. 
Since the statements of Theorems~\ref{thm:main} and~\ref{thm:trees} are asymptotic (in both $n$ and $\Delta$), we always assume, where needed, that $n$ and $\Delta$ are sufficiently large. All logarithms are base $e$. We omit floor and ceiling signs whenever these are not crucial. 

\section{Preliminaries} \label{sec:preliminary}
We start by stating three known concentration inequalities that will be used in this paper. The first is a standard Chernoff-type bound (see, e.g.,~\cite{JLR}), the second is a simplified version of Azuma's inequality (see, e.g.,~\cite[Theorem 2.27]{JLR}), and the third is a simplified version of Talagrand's inequality (see, e.g.,~\cite{MR}).   

\begin{lemma} \label{lem:Chernoff}
	Let $X$ be a binomial random variable. Then, for every $\lambda \geq 0$, it holds that
	$$
	\mathbb{P}[X \leq \mathbb{E}[X] - \lambda] \leq e^{- \frac{\lambda^2}{2\mathbb{E}[X]}}
	$$
	and that
	$$
	\mathbb{P}[X \geq \mathbb{E}[X] + \lambda] \leq e^{- \frac{\lambda^2}{2 (\mathbb{E}[X] + \lambda/3)}} \; .
	$$
\end{lemma}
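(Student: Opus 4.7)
The plan is to use the classical exponential-moment (Chernoff) method. Writing $X=\sum_{i=1}^n X_i$ as a sum of independent Bernoulli indicators (or, more generally, allowing heterogeneous parameters $p_i$ with $\mu := \mathbb{E}[X] = \sum_i p_i$), the key identity is the elementary bound
$$
\mathbb{E}[e^{tX_i}] \;=\; 1 + p_i(e^t - 1) \;\leq\; \exp\!\bigl(p_i(e^t - 1)\bigr), \qquad t \in \mathbb{R},
$$
which, by independence, gives $\mathbb{E}[e^{tX}] \le \exp(\mu(e^t - 1))$. All tail estimates then reduce to optimizing a single one-dimensional expression in $t$.

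For the lower tail, I would fix $t > 0$ and apply Markov's inequality to the non-negative variable $e^{-tX}$:
$$
\mathbb{P}[X \le \mu - \lambda] \;=\; \mathbb{P}[e^{-tX} \ge e^{-t(\mu-\lambda)}] \;\leq\; e^{t(\mu-\lambda)}\,\mathbb{E}[e^{-tX}] \;\leq\; \exp\!\bigl(t(\mu-\lambda) + \mu(e^{-t}-1)\bigr).
$$
Taking $t = \lambda/\mu$ and using the Taylor inequality $e^{-t} - 1 \le -t + t^2/2$ (valid for all $t \ge 0$) collapses the exponent to $-\lambda^2/(2\mu)$, which is exactly the first claim.

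The upper tail is handled analogously via $e^{tX}$, giving
$$
\mathbb{P}[X \ge \mu + \lambda] \;\leq\; \exp\!\bigl(-t(\mu+\lambda) + \mu(e^t-1)\bigr).
$$
The natural minimizer is $t = \ln(1 + \lambda/\mu)$, at which point the exponent equals $-\mu\, h(\lambda/\mu)$ with $h(y) := (1+y)\ln(1+y) - y$. The second claim then follows from the elementary inequality
$$
h(y) \;\ge\; \frac{y^2}{2 + \tfrac{2}{3}y}, \qquad y \ge 0,
$$
after multiplying numerator and denominator of the right-hand side through by $\mu$. This inequality can be verified by comparing derivatives at $y = 0$ and using convexity / a direct one-variable calculus check.

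The only delicate step is the last inequality on $h$: the main obstacle is securing the precise Bernstein-type denominator $2(\mathbb{E}[X] + \lambda/3)$ rather than a plain sub-Gaussian form $2\mathbb{E}[X]$. A naive quadratic approximation of $e^t - 1 - t$ already yields the weaker bound $\exp(-\lambda^2/(2\mu))$, which is good only in the regime $\lambda \ll \mu$; capturing the correct behavior for large deviations requires keeping the cubic correction in the Taylor expansion under control, which is precisely what the $\lambda/3$ term reflects. Everything else in the proof is routine: independence plus a single optimization in $t$.
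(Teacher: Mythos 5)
The paper states this lemma without proof, citing it as a standard Chernoff-type bound from~\cite{JLR}; your exponential-moment derivation is the standard proof of exactly this form and is correct, including the final step $h(y)\ge y^2/(2+2y/3)$, which follows since $f(y):=h(y)-y^2/(2+2y/3)$ satisfies $f(0)=f'(0)=0$ and $f''(y)=\tfrac{1}{1+y}-\tfrac{27}{(3+y)^3}\ge 0$ for $y\ge 0$ (equivalent to $(1+y/3)^3\ge 1+y$).
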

\begin{lemma}\label{lem:Azuma}{\cite[Theorem 2.27]{JLR}}
	Let $X$ be a non-negative random variable, not identically $0$, which is determined by $T$ independent trials $w_1, \ldots, w_T$. Suppose that $c \in \mathbb{R}$ is such that changing the outcome of any one of the trials can change the value of $X$ by at most $c$. Then, for every $\lambda \geq 0$, it holds that 
	$$
	\mathbb{P}\left[ X \leq \mathbb{E}[X] - \lambda \right] \leq e^{- \frac{\lambda^2}{2 c^2 T}}
	$$
	and that
	$$
	\mathbb{P}\left[ X \geq \mathbb{E}[X] + \lambda \right] \leq e^{- \frac{\lambda^2}{2 c^2 T}} .
	$$ 
\end{lemma}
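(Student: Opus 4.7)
The stated inequality is the classical bounded-differences (McDiarmid) concentration inequality, and the plan is to prove it by the standard Doob-martingale derivation. First, set $\F_i = \sigma(w_1, \ldots, w_i)$ and define the Doob martingale $X_i = \mathbb{E}[X \mid \F_i]$ for $0 \leq i \leq T$, so that $X_0 = \mathbb{E}[X]$ and $X_T = X$. The telescoping identity $X - \mathbb{E}[X] = \sum_{i=1}^{T}(X_i - X_{i-1})$ reduces the task to a concentration estimate for a sum of bounded martingale differences.

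Second, I would verify the increment bound $|X_i - X_{i-1}| \leq c$ almost surely. Writing $X = f(w_1, \ldots, w_T)$ for a deterministic function $f$ and using independence of the trials, one has
\[
X_i(w_1, \ldots, w_i) = \mathbb{E}\bigl[f(w_1, \ldots, w_i, w'_{i+1}, \ldots, w'_T)\bigr],
\]
where $w'_{i+1}, \ldots, w'_T$ are independent copies of $w_{i+1}, \ldots, w_T$. The bounded-difference hypothesis says that, for each fixed choice of the other coordinates, replacing $w_i$ alone moves $f$ by at most $c$; averaging over $w'_{i+1}, \ldots, w'_T$ shows that, conditionally on $\F_{i-1}$, the random variable $X_i$ takes values in a range of width at most $c$. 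In particular $|X_i - X_{i-1}| \leq c$ almost surely. (Couple the two conditional expectations defining $X_i$ and $X_{i-1}$ on a common space so that only $w_i$ is re-sampled, and apply the hypothesis pointwise.)

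Third, I would apply Hoeffding's lemma conditionally: for any mean-zero random variable supported in an interval of length at most $2c$, the moment generating function is bounded by $e^{t^2 c^2/2}$. Applied to the conditional distribution of $X_i - X_{i-1}$ given $\F_{i-1}$, this yields
\[
\mathbb{E}\bigl[e^{t(X_i - X_{i-1})} \mid \F_{i-1}\bigr] \leq e^{t^2 c^2 / 2}
\]
for every $t \in \mathbb{R}$. Iterating via the tower property gives $\mathbb{E}\bigl[e^{t(X - \mathbb{E}[X])}\bigr] \leq e^{t^2 T c^2 / 2}$. A Markov bound on $e^{t(X-\mathbb{E}[X])}$ then produces
\[
\mathbb{P}[X \geq \mathbb{E}[X] + \lambda] \leq \exp\bigl(t^2 T c^2 / 2 - t\lambda\bigr),
\]
and optimizing at $t = \lambda / (T c^2)$ yields the claimed upper tail $e^{-\lambda^2/(2Tc^2)}$. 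The lower tail follows by applying the same argument to $-X$; note that the non-negativity of $X$ is not actually used anywhere in the concentration step itself.

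The main (and really the only) delicate point is the increment bound: one must invoke independence to couple $X_i$ and $X_{i-1}$ on a common probability space so that the bounded-difference hypothesis can be applied pointwise before integrating. After that, the rest is the routine Chernoff-style MGF argument. Since the statement is quoted from \cite[Theorem 2.27]{JLR}, the write-up in the paper itself need be no more than a pointer to that reference.
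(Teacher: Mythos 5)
Your proof is correct and is precisely the standard Doob-martingale (Azuma/McDiarmid) argument; the paper itself gives no proof here, simply citing the statement as Theorem~2.27 of the Janson--\L uczak--Ruci\'nski reference, and that reference uses exactly this route. Everything you say checks out: the Doob martingale $X_i = \mathbb{E}[X \mid \mathcal{F}_i]$, the coupling argument showing $|X_i - X_{i-1}| \leq c$ (independence of the trials is genuinely needed here, as you note), conditional Hoeffding for a mean-zero increment supported in $[-c,c]$ giving $\mathbb{E}[e^{t(X_i-X_{i-1})}\mid\mathcal{F}_{i-1}]\leq e^{t^2c^2/2}$, and the final Markov-plus-optimization yielding $e^{-\lambda^2/(2c^2T)}$ after choosing $t = \lambda/(Tc^2)$. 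You are also right that the non-negativity of $X$ plays no role in the concentration argument; it is carried in the hypothesis only because the paper lifts the phrasing verbatim from the cited theorem. One small remark worth recording: the coupling argument actually shows that the conditional law of $X_i$ given $\mathcal{F}_{i-1}$ lies in an interval of width $c$ (not $2c$), so the same proof, with Hoeffding applied to that interval, yields the sharper constant $e^{-2\lambda^2/(c^2T)}$; the stated form is a factor-of-four relaxation, and your derivation correctly targets the weaker exponent that the lemma asserts.
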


\begin{lemma}\label{lem:Talagrand}{\cite[Pages 80-81]{MR}}
	Let $X$ be a non-negative random variable, not identically $0$, which is determined by $T$ independent trials $w_1, \ldots, w_T$.
	Suppose that $c,g > 0$ are such that 
	\begin{enumerate}
		\item Changing the outcome of any one of the trials can change the value of $X$ by at most $c$.
		\item For every $s$, if $X \geq s$ then there is a set of at most $g \cdot s$ trials whose outcomes certify\footnote{To be precise, this means that if $(w_1,\dots,w_T)$ is such that $X \geq s$, then there is $k \leq g \cdot s$ and $1 \leq i_1 < \dots < \nolinebreak i_k \leq T$ such that changing the outcome of any trials other than $w_{i_1},\dots,w_{i_k}$ does not change the fact that $X \geq s$.} that \nolinebreak $X \geq s$. 
	\end{enumerate}
	Then, for every $0 \leq \lambda \leq \mathbb{E}[X]$, it holds that
	$$
	\mathbb{P}\left[ |X - \mathbb{E}[X]| > \lambda + 60c\sqrt{g\mathbb{E}[X]} \right] 
	\leq 4e^{-\frac{\lambda^2}{8c^2g\mathbb{E}[X]}} \; \; .
	$$
\end{lemma}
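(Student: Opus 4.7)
The plan is to derive the stated inequality from Talagrand's classical convex distance inequality: for any product probability space $\Omega = \prod_{i=1}^T \Omega_i$ and any measurable $A \subseteq \Omega$,
\[
\mathbb{P}[A] \cdot \int \exp\!\left( d_T(x,A)^2 / 4 \right) \, d\mathbb{P}(x) \leq 1,
\]
where $d_T(x, A) = \sup_{\alpha} \inf_{y \in A} \sum_{i : x_i \neq y_i} \alpha_i$ is the Talagrand convex distance, with the supremum taken over nonnegative unit vectors $\alpha \in \mathbb{R}^T$.

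First I would lower bound $d_T(x, A_a)$, where $A_a = \{y : X(y) \leq a\}$ and $x$ is any outcome with $X(x) \geq s > a$. By hypothesis (2), there exists a set $S \subseteq [T]$ of size at most $g \cdot s$ certifying $X(x) \geq s$. For any $y \in A_a$, let $z$ agree with $x$ on coordinates of $S$ and with $y$ elsewhere; the certificate property forces $X(z) \geq s$, while hypothesis (1) gives $X(z) - X(y) \leq c \cdot |\{i \in S : x_i \neq y_i\}|$, so $y$ differs from $x$ in at least $(s-a)/c$ coordinates of $S$. Choosing the test vector $\alpha_i = |S|^{-1/2}$ on $S$ and zero elsewhere then yields
\[
d_T(x, A_a) \geq \frac{s - a}{c \sqrt{g s}}.
\]

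Plugging this bound into Talagrand's inequality and restricting the integral to the event $\{X \geq s\}$ gives
\[
\mathbb{P}[X \leq a] \cdot \mathbb{P}[X \geq s] \leq \exp\!\left( - \frac{(s - a)^2}{4 c^2 g s} \right).
\]
Taking $a$ to be the median $m$ of $X$ (so $\mathbb{P}[X \leq m] \geq 1/2$) controls the upper tail around $m$; setting $s = m$ and varying $a$ handles the lower tail symmetrically.

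The remaining step, which I expect to require the most care, is converting concentration around the median $m$ to concentration around the mean $\mathbb{E}[X]$. Integrating the tail estimate above yields $|m - \mathbb{E}[X]| = O(c \sqrt{g \mathbb{E}[X]})$, which is precisely why the statement of the lemma carries the additive slack $60 c \sqrt{g \mathbb{E}[X]}$ outside of $\lambda$. A triangle-inequality argument then lets one replace $m$ by $\mathbb{E}[X]$ in the deviation and absorb the discrepancy between $s$ and $\mathbb{E}[X]$ in the denominator of the exponent into the generous constants $8$ and $4$, producing the stated bound $4 \exp(- \lambda^2 / (8 c^2 g \mathbb{E}[X]))$. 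The main obstacle is simply the bookkeeping: choosing the thresholds $s$ and $a$ carefully enough that the conversion from median to mean (and from $s$ to $\mathbb{E}[X]$) can be paid for by the prefactors $60$, $8$, and $4$ without losing the correct quadratic shape of the exponent.
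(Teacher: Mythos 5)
The paper cites this inequality from Molloy and Reed \cite{MR} without giving a proof, so there is no in-text argument to compare against; your sketch is essentially the derivation in that reference. The convex-distance lower bound via the certificate set $S$ (taking $\alpha_i=|S|^{-1/2}$ on $S$), the resulting two-point bound $\mathbb{P}[X\le a]\,\mathbb{P}[X\ge s]\le e^{-(s-a)^2/(4c^2gs)}$, and the two-sided tail around the median $m$ are all correct, and you have correctly identified the remaining work as converting the median to the mean (showing $|m-\mathbb{E}[X]|=O\bigl(c\sqrt{g\,\mathbb{E}[X]}\bigr)$ by integrating the tail) and replacing $s$ by $\mathbb{E}[X]$ in the denominator using $\lambda\le\mathbb{E}[X]$, which together produce the prefactors $60$, $8$, and $4$.
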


\noindent
We will also need the following lemma regarding ``balanced" orientations of graphs. 

\begin{lemma} \label{lem::orientations}
	Let $H$ be an $n$-vertex graph of maximum degree $\Delta$. Then there exists an orientation $D$ of the edges of $H$ which satisfies the following two conditions:
	\begin{description}
		\item [(a)] $d_{D}^+(u) \leq \lfloor \Delta/2 \rfloor + 1$ for every $u \in V(H)$;
		\item [(b)] there exists a set $A \subseteq V(H)$ of size $|A| \geq \frac{n}{\Delta^2 + 1}$ such that $d_{D}^+(u) = 0$ for every $u \in A$. 
	\end{description}
\end{lemma}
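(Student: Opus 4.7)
The plan is to construct $D$ and $A$ in two stages. Stage one produces the sink set. I would take $A \subseteq V(H)$ to be any maximal \emph{distance-$2$ independent set}, i.e.\ a set such that any two vertices of $A$ are at graph-distance strictly greater than $2$ in $H$, equivalently an independent set in the square $H^2$. Since $H^2$ has maximum degree at most $\Delta + \Delta(\Delta-1) = \Delta^2$, a standard greedy argument yields such $A$ with $|A| \geq n / (\Delta^2+1)$, giving exactly the lower bound required in (b). The essential feature of distance-$2$ independence (as opposed to ordinary independence) is that the open neighborhoods $\{N_H(a) : a \in A\}$ are pairwise disjoint, so every vertex of $V(H) \setminus A$ has at most one neighbor in $A$.

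Stage two is the orientation itself. I would first orient every edge between $A$ and $V(H) \setminus A$ so that it points into $A$; this immediately yields $d^+_D(a) = 0$ for all $a \in A$, verifying (b). It remains to orient the induced subgraph $H' := H[V(H) \setminus A]$, for which I would use the classical Eulerian trick: add a virtual matching pairing up the odd-degree vertices of $H'$ to make every degree even (increasing each degree by at most one), take an Eulerian orientation of the resulting (even) multigraph so that in-degree equals out-degree at every vertex, and then delete the virtual edges. This produces an orientation of $H'$ satisfying $d^+_{H'}(v) \leq \lceil d_{H'}(v)/2 \rceil$ for every $v \in V(H) \setminus A$, which I would combine with the orientation of the $A$-to-$\overline{A}$ edges to obtain $D$.

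For (a), the cases $v \in A$ and $v \in V(H) \setminus A$ with no neighbor in $A$ are immediate, since in the latter case $d^+_D(v) = d^+_{H'}(v) \leq \lceil \Delta/2 \rceil \leq \lfloor \Delta/2 \rfloor + 1$. The interesting case is $v \in V(H) \setminus A$ having a (necessarily unique) neighbor in $A$: such a $v$ contributes $+1$ to $d^+_D(v)$ from the edge oriented into $A$, and its degree inside $H'$ is at most $\Delta - 1$, hence $d^+_D(v) \leq 1 + \lceil (\Delta-1)/2 \rceil = \lfloor \Delta/2 \rfloor + 1$, with the arithmetic working for both parities of $\Delta$.

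The main delicacy in this plan is the parity arithmetic. Orienting toward $A$ is ``expensive'' — each neighbor of $A$ pays an extra $+1$ — while the Eulerian orientation of $H'$ is ``cheap'' (ceiling of half the degree in $H'$). The scheme succeeds only because choosing $A$ to be distance-$2$ independent ensures that every $A$-neighbor pays exactly one such surcharge and simultaneously has its degree in $H'$ drop by exactly one, so the two effects combine to give precisely $\lfloor \Delta/2 \rfloor + 1$ and not $\lfloor \Delta/2 \rfloor + 2$.
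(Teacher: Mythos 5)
Your proof is correct and follows essentially the same route as the paper's: both take a large independent set in $H^2$ as the sink set $A$ (so each vertex outside $A$ has at most one neighbor in $A$), orient all $A$-incident edges into $A$, and use an Eulerian orientation of $H[V(H)\setminus A]$ after adding auxiliary edges to even out parities. The only cosmetic difference is that you even out parities with a virtual matching on the odd-degree vertices, whereas the paper adds a single auxiliary vertex joined to all of them; both variants yield $d^+_{H'}(v)\le\lceil d_{H'}(v)/2\rceil$ and hence the same final bound.
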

\begin{proof} 
	Let $H^2$ denote the square of $H$, that is, the graph obtained from $H$ by adding an edge between any two vertices at distance $2$ in $H$. Let $A \subseteq V(H)$ be a maximum independent set in $H^2$; clearly $|A| \geq \frac{n}{\Delta^2 + 1}$. Let $H_0 = H \setminus A$. If $d_{H_0}(u)$ is even for every $u \in V(H_0)$, then let $H_1 = H_0$. Otherwise, let $H_1$ denote the graph obtained from $H_0$ by adding a new vertex $x$ and connecting it by an edge to every vertex of odd degree in $H_0$. Orient the edges of each connected component of $H_1$ along some Eulerian cycle of that component. Orient every edge of $E_H(V(H) \setminus A, A)$ from $V(H) \setminus A$ to $A$. Delete $x$ and denote the resulting oriented graph by $D$. Observe that $A$ is independent in $H$ and thus $D$ is an orientation of all the edges of $H$. It is evident that $D$ satisfies (b). Moreover, $D$ satisfies (a) as $A$ is independent in $H^2$ and thus $d_H(u, A) \leq 1$ for every $u \in V(H) \setminus A$.       
\end{proof}


\section{Constructing Spanning Graphs of Given Maximum Degree} \label{sec:main}
In this section we prove Theorem~\ref{thm:main} and Proposition~\ref{prop::offline}. The tools we develop here will also be used in the proof of Theorem~\ref{thm:trees} in Section~\ref{sec:trees}. 
We start by introducing some definitions and sketching a rough outline of the proof scheme that we will use to prove Theorem \ref{thm:main}. From this point onward, we fix an $n$-vertex graph $H$ with maximum degree $\Delta$. 
We assume that the ground-set of vertices for the semi-random process is $[n]$. The following definition will play an important role in our arguments.
\begin{definition}\label{def:good_set}
	Let $G$ be a graph on the vertex set $[n]$, and let $\varphi : V(H) \rightarrow [n]$ be a bijection. We say that a subset $A \subseteq V(H)$ is {\em $(G, \varphi)$-good} if $\varphi$ maps all edges of $H$ contained in $A$ to edges of $G$; that is, $A$ is $(G,\varphi)$-good if $\{\varphi(x), \varphi(y)\} \in E(G)$ for every $x, y \in A$ such that $\{x,y\} \in E(H)$. 
\end{definition}
The graph $G$ under consideration will always be \Builder's graph at some given moment during the process. It will usually be clear which moment we are considering, and so we will omit $G$ from the notation, simply writing ``$\varphi$-good". Note that, if at some point during the process, there is a bijection $\varphi : V(H) \rightarrow [n]$ such that $V(H)$ is $\varphi$-good, then \Builder has succeeded in constructing a copy of $H$. 


Our strategy consists of two stages: in the first stage, \Builder fixes an arbitrary bijection \linebreak $\varphi : V(H) \rightarrow [n]$ and plays so as to construct a $\varphi$-good set which is as large as possible. We show (see Lemma~\ref{lem:initial} below) that \Builder can w.h.p. guarantee the existence of a $\varphi$-good set which covers ``almost all" of the vertices of $H$, within $(\Delta/2 + o(\Delta))n$ rounds. This part of the argument is fairly straightforward, and is given in Section \ref{subsec:initial}.  In the second stage, which is far more involved and constitutes the heart of the proof, \Builder tries to iteratively extend this $\varphi$-good set by updating the embedding $\varphi$. We will show that by using a suitable ``role-switching" strategy (i.e., Strategy~\ref{strategy}), \Builder can ensure that w.h.p. $V(H)$ will be $\varphi$-good after $(\Delta + o(\Delta))n$ additional rounds. This is done in Lemma~\ref{lem:main}.


\subsection{The Initial Embedding, Proposition \ref{prop::offline}, and the Easy Case of Theorem \ref{thm:main}}\label{subsec:initial}

In this section we describe and analyze the first stage of \Builder's strategy. Along the way we prove Proposition \ref{prop::offline}, as well as the easy part of Theorem \ref{thm:main}, which corresponds to the regime $\Delta = \omega(\log n)$. 
In what follows, we will need the following lemma. 

\begin{lemma}\label{lem:number_of_appearances}
	Let $n,d$ be positive integers, let $\alpha \in (0,0.1)$, and suppose that 
	$d \gg \log(1/\alpha)$. 
	In the course of $(d + o(d))n$ rounds of the semi-random process on $n$ vertices, 
	the number of vertices $i \in [n]$ which were offered at most $d$ times is w.h.p. less than $\alpha n$.
\end{lemma}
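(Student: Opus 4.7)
The plan is to fix $T := (d+f)n$ as the number of rounds, with $f := 2\sqrt{d\log(2/\alpha)}$; the hypothesis $d \gg \log(1/\alpha)$ ensures $f = o(d)$, so indeed $T = (d+o(d))n$. For each $i \in [n]$, let $N_i$ denote the number of rounds in which vertex $i$ was offered, set $X_i := \mathbf{1}[N_i \le d]$ and $Y := \sum_{i=1}^{n} X_i$. Since offered vertices are drawn uniformly at random and independently, $N_i \sim \mathrm{Bin}(T, 1/n)$ with mean $d+f$, and the goal is to show $Y < \alpha n$ w.h.p.

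First, I would apply Lemma~\ref{lem:Chernoff} to $N_i$ with deviation $\lambda = f$. Since $f = o(d)$, we have $d+f \le 2d$ once $d$ is large, so
$\mathbb{P}[N_i \le d] \le \exp\!\bigl(-f^2/(2(d+f))\bigr) \le \exp\!\bigl(-f^2/(4d)\bigr) = \exp(-\log(2/\alpha)) = \alpha/2$.
Summing over $i$ yields $\mathbb{E}[Y] \le \alpha n / 2$.

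Next, I would argue that $Y$ concentrates around its expectation. View $Y$ as a function of the $T$ independent trials $w_1,\dots,w_T$ (the offered vertices). Changing a single $w_t$ from $u$ to $u'$ alters only $N_u$ and $N_{u'}$, and thus flips at most two of the indicators $X_i$, so $Y$ satisfies the bounded-differences condition with $c = 2$. Azuma's inequality (Lemma~\ref{lem:Azuma}) with $\lambda = \alpha n/2$ then gives
$\mathbb{P}[Y \ge \alpha n] \le \exp\!\bigl(-\alpha^2 n^2 / (32 T)\bigr) = \exp\!\bigl(-\Omega(\alpha^2 n / d)\bigr) = o(1)$
in the parameter regimes relevant to the later applications of this lemma (where $d$ is of order at most $\Delta$, $\alpha$ at worst inverse polynomial in $\Delta$, and $\Delta = o(n)$).

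The main obstacle, such as it is, lies in this last concentration step: the Azuma bound is $o(1)$ only when $\alpha^2 n / d \to \infty$, which is not literally implied by $d \gg \log(1/\alpha)$ alone. To cover wider parameter ranges one could instead use Chebyshev's inequality, exploiting the fact that $(N_1,\dots,N_n)$ is multinomial, so its coordinates and the decreasing functions $X_i$ of them are negatively associated; this gives $\mathrm{Var}(Y) \le \mathbb{E}[Y] \le \alpha n/2$ and therefore $\mathbb{P}[Y \ge \alpha n] \le O(1/(\alpha n))$, which suffices whenever $\alpha n \to \infty$ (outside this regime the conclusion is essentially vacuous).
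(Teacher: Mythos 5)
Your argument is correct in the regimes where the lemma is actually invoked, but it follows a genuinely different route from the paper's. You work vertex-by-vertex: a Chernoff bound on each $N_i \sim \Bin(T,1/n)$ gives $\mathbb{P}[N_i \le d] \le \alpha/2$, hence $\mathbb{E}[Y]\le \alpha n/2$, and you then concentrate the count $Y$ around its mean (Azuma with $c=2$, or Chebyshev via negative association). The paper instead reformulates: if some set $U$ of $\lceil \alpha n\rceil$ vertices were each offered at most $d$ times, then fewer than $d\alpha n$ of the $\ell$ offered vertices would land in $U$; it then applies a lower-tail Chernoff bound to $X_U \sim \Bin(\ell,\alpha)$ for a \emph{fixed} $U$ and union-bounds over all $\binom{n}{\alpha n}$ choices.

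The two approaches trade off as you anticipate. Your concentration step yields failure probability $\exp(-\Omega(\alpha^2 n / d))$ (Azuma) or $O(1/(\alpha n))$ (Chebyshev via negative association of the multinomial counts), neither of which is $o(1)$ from $d \gg \log(1/\alpha)$ alone; the paper's union bound gives $\exp(-\Omega(\alpha\log(1/\alpha)\,n))$, which is $o(1)$ under much weaker growth of $\alpha n$. You are right, though, that in every application in the paper $\alpha$ is of order $\mathrm{poly}(1/\Delta)$ and $d=O(\Delta)=O(\log n)$, so $\alpha^2 n/d \to \infty$ and even your weaker Azuma bound goes through. One small imprecision: you justify $o(1)$ by ``$\Delta = o(n)$,'' but the actual requirement is $\Delta^{O(1)} = o(n)$; this holds comfortably for $\Delta=O(\log n)$, which is what the paper uses. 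Your self-diagnosis of the gap is accurate and the proposed Chebyshev patch is sound (for multinomial counts the indicators $X_i$ are decreasing functions of negatively associated variables, so $\mathrm{Var}(Y)\le\mathbb{E}[Y]$); you could note that the paper's approach sidesteps the whole concentration issue by applying Chernoff directly to a sum with large mean $\alpha\ell$ rather than to indicators with small individual means.
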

\begin{proof}
	Set	$\ell := \big( d + \sqrt{6 d \log(1/\alpha)} \big)n = (d + o(d))n$ (here we use our assumption that $d \gg \log(1/\alpha)$), and 
	let $w_1,\dots,w_{\ell}$ be the first $\ell$ random vertices offered to \Builder. Let $U$ be the set of all vertices $i \in [n]$ such that 
	$\left| \{1 \leq j \leq \ell : w_j = i\} \right| \leq d$. Evidently, 
	$\left| \{1 \leq j \leq \ell : w_j \in U\} \right| \leq d \cdot |U|$. So in order to prove the lemma, it is enough to show that w.h.p., 
	every $U \subseteq [n]$ of size $\alpha n$ satisfies the inequality
	$X_U := \left| \{ 1 \leq j \leq \ell : w_j \in U \} \right| >
	d \cdot |U| = d\alpha n$. 
	
	Fix any $U \subseteq [n]$ of size $|U| = \alpha n$. Note that $X = X_U$ has the distribution $\Bin( \ell, \frac{|U|}{n} )$, and in particular
	$\mathbb{E}[X] = \ell|U|/n = \alpha \ell$. 
	By Lemma \ref{lem:Chernoff} with $\lambda = \alpha \ell - d \alpha n$, we have
	\begin{align*}
	\mathbb{P}\left[X < d \alpha n\right] &= 
	\mathbb{P}\left[X < \mathbb{E}[X] - \lambda\right] \leq
	\exp \left( {-\frac{\alpha^2n^2 \cdot \left( \ell/n - d\right)^2}{2\alpha\ell}} \right) 
	\\ &\leq 
	\exp \left( {-\frac{\alpha n \cdot \left( \ell/n - d\right)^2}{2d+o(d)}} \right) =
	e^{-(3 + o_d(1))\alpha \log(1/\alpha) n } \leq e^{-2.5\alpha\log(1/\alpha)n} \; \; ,
	\end{align*}
	where the last inequality holds if $d$ is large enough. 
	Now our assertion follows by taking the union bound \nolinebreak over \nolinebreak all 
	$$
	\binom{n}{\alpha n} \leq \left( e/\alpha \right)^{\alpha n} = 
	e^{(\log(1/\alpha) + 1) \cdot \alpha n} \leq  
	e^{2\alpha \log(1/\alpha)n}
	$$
	sets $U \subseteq [n]$ of size $|U| = \alpha n$.
\end{proof}

\begin{proof}[Proof of Proposition~\ref{prop::offline}]
	Since the statement of the proposition is asymptotic in $\Delta$, we may (and will) assume that $\Delta$ is large enough, where needed. Moreover, in light of Theorem \ref{thm:main}, we may assume that $\Delta = O(\log n)$ (because otherwise, \Builder can w.h.p. construct a copy of $H$ in $(\Delta/2 + o(\Delta))n$ rounds even in the online version of the semi-random process).
	Let $D$ be an orientation of the edges of $H$ as in Lemma~\ref{lem::orientations}. Let $w_1, w_2, \ldots$ denote the sequence of random vertices \Builder is offered. Let $m(D)$ denote the smallest integer $m$ for which there exists a bijection $\varphi : V(H) \rightarrow [n]$ such that $\varphi(u)$ appears in $(w_1, w_2, \ldots, w_m)$ at least $d_D^+(u)$ times for every $u \in V(H)$. It follows by Proposition 4.1 in~\cite{BHKPSS} that the number of rounds needed for \Builder to construct a copy of $H$ in the offline version of the semi-random process is at most $m(D)$. Hence, in order to complete the proof of this proposition, it suffices to show that w.h.p., in the course of 
	$m := \left( \Delta/2 + o(\Delta) \right) n$ rounds, at least $n - n/(\Delta^2 + 1)$ vertices are offered at least $\lfloor \Delta/2 \rfloor + 1$ times each. But this is just the statement of Lemma \ref{lem:number_of_appearances} with parameters $d = \lfloor \Delta/2 \rfloor$ and $\alpha = 1/(\Delta^2+1)$.
\end{proof}

\noindent 
In subsequent proofs, \Builder will employ the following simple (non-adaptive) strategy.

\begin{strategy}\label{strategy:initial}
	Let $H$ be an $n$-vertex graph of maximum degree $\Delta$. Fix an orientation $D$ of the edges of $H$ which satisfies property (a) of Lemma~\ref{lem::orientations}. Let $\varphi : V(H) \to [n]$ be an arbitrary bijection. At any given round of the process, having been offered a random vertex $w$, \Builder chooses an arbitrary vertex $u \in \varphi\left( N_D^+(\varphi^{-1}(w)) \right)$ which is not adjacent to $w$ in his current graph, and claims the edge $\{u, w\}$; if no such vertex $u$ exists, then \Builder claims an arbitrary edge incident with $w$. 
\end{strategy}

\noindent
In the following proposition we prove the easy part of Theorem \ref{thm:main}.
\begin{proposition}\label{prop:Delta >> logn}
	For every $\varepsilon \in (0,1)$ there exists an integer $C$ for which the following holds. Let $n$ and $\Delta = \Delta(n) \geq C \log n$ be positive integers and let $H$ be an $n$-vertex graph of maximum degree at most $\Delta$. Then, in the online version of the semi-random process on $n$ vertices, \Builder has a strategy guaranteeing that \whp, after $(1 + \varepsilon) \frac{\Delta n}{2}$ rounds of the process, his graph will contain a copy of $H$.
\end{proposition}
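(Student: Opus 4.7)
The plan is to have Builder execute Strategy~\ref{strategy:initial}, using the orientation $D$ from Lemma~\ref{lem::orientations} (which guarantees $d_D^+(x) \le \lfloor \Delta/2 \rfloor + 1$ for every $x \in V(H)$) together with an arbitrary bijection $\varphi \colon V(H) \to [n]$. The first step is to record the following deterministic fact: by the definition of Strategy~\ref{strategy:initial}, whenever a vertex $w = \varphi(x)$ is offered and there is still an unbuilt edge $\{\varphi(x),\varphi(y)\}$ with $y \in N_D^+(x)$, Builder claims such an edge. Consequently, as soon as each vertex $\varphi(x)$ has been offered at least $d_D^+(x) \le \lfloor \Delta/2 \rfloor + 1$ times, every edge of $H$ has been embedded by $\varphi$, and Builder wins.

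It therefore suffices to prove that w.h.p., after $m := (1+\varepsilon)\Delta n/2$ rounds, every vertex in $[n]$ has been offered at least $\lfloor \Delta/2 \rfloor + 1$ times. For this I would apply the Chernoff bound (Lemma~\ref{lem:Chernoff}) to the binomial random variable $X_i \sim \Bin(m, 1/n)$, which has mean $(1+\varepsilon)\Delta/2$. The gap between this mean and the target threshold $\lfloor \Delta/2 \rfloor + 1$ is at least $\varepsilon \Delta/2 - 1 \geq \varepsilon\Delta/4$ once $\Delta$ is sufficiently large in terms of $\varepsilon$, and hence Chernoff yields $\mathbb{P}[X_i \le \lfloor \Delta/2 \rfloor + 1] \le \exp(-c\varepsilon^2 \Delta)$ for some absolute constant $c > 0$. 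Choosing $C$ large enough in terms of $\varepsilon$ so that $c\varepsilon^2 \Delta \ge 2\log n$ whenever $\Delta \ge C \log n$, a union bound over the $n$ vertices then gives failure probability $O(1/n)$, completing the argument.

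The only delicate point to verify is that Builder never wastes a round on a junk move while it is still possible to embed an out-edge at $x = \varphi^{-1}(w)$. This is however immediate from the definition of Strategy~\ref{strategy:initial}: the junk branch is triggered only when every vertex in $\varphi(N_D^+(x))$ is already adjacent to $w$ in Builder's current graph, i.e., only after all out-edges of $x$ have already been built. Apart from this observation, the proposition reduces to a direct Chernoff estimate, and there is no genuine obstacle in the argument.
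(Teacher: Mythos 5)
Your proof is correct and follows essentially the same route as the paper: Builder runs Strategy~\ref{strategy:initial} with the balanced orientation from Lemma~\ref{lem::orientations}, reduces the problem to the event that every vertex is offered at least $\lfloor \Delta/2 \rfloor + 1$ times in $(1+\varepsilon)\Delta n/2$ rounds, and closes with Chernoff (Lemma~\ref{lem:Chernoff}) plus a union bound, differing from the paper only in the explicit constants. The observation you flag as "delicate" is the same one the paper treats as immediate, and you treat it correctly.
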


\begin{proof}
	\Builder executes Strategy \ref{strategy:initial} for 
	$\ell := (1 + \varepsilon) \frac{\Delta n}{2}$ rounds. 
	In the notation of Strategy \ref{strategy:initial}, it is evident that if for every $x \in V(H)$, the vertex $\varphi(x)$ is offered at least $d_D^+(x)$ times, then \Builder is successful in building a copy of $H$. 
	Therefore, in order to complete the proof of the proposition, it suffices to show that w.h.p., for every $1 \leq i \leq n$, the vertex $i$ is offered at least $\lfloor \Delta/2 \rfloor + 1$ times in the course of these $\ell$ rounds of the process.
	
	Fix any $1 \leq i \leq n$, and let $Z_i$ be the random variable counting the number of times $i$ is offered during the $\ell$ rounds. Then $Z_i \sim \Bin(\ell, 1/n)$, implying that $\mathbb{E}[Z_i] = \ell/n = (1 + \varepsilon) \Delta/2$. Applying Lemma~\ref{lem:Chernoff} with $\lambda = \varepsilon \Delta/2$, we obtain  
	\begin{equation*}
	\mathbb{P}[Z \leq \Delta/2] = 
	\mathbb{P}[Z \leq \mathbb{E}[Z] - \varepsilon\Delta/2] \leq
	\exp\left( {- \frac{(\varepsilon \Delta/2)^2}{(1 + \varepsilon)\Delta}} \right) \leq    
	e^{-\Delta\varepsilon^2/8} \leq 
	n^{- C \varepsilon^2/8} \leq 1/n^2
	\end{equation*}
	where the penultimate inequality holds since $\Delta \geq C \log n$, and the last inequality holds for $C \geq 16 \varepsilon^{-2}$. A union bound then implies that with probability at least $1 - \frac{1}{n} = 1 - o(1)$, every $1 \leq i \leq n$ was offered at least $\lfloor \Delta/2 \rfloor + 1$ times, as required.
\end{proof}
We now return to the main case of Theorem \ref{thm:main}, assuming henceforth that 
$\Delta = O(\log n)$. In the following lemma, we analyze the aforementioned first stage of \Builder's strategy. 
\begin{lemma}\label{lem:initial}
	Let $n,\Delta$ be positive integers, let $\alpha \in (0,0.1)$, and suppose that 
	$\Delta \gg \log(1/\alpha)$. 
	Let $H$ be an $n$-vertex graph of maximum degree $\Delta$ and let $\varphi : V(H) \rightarrow [n]$ be a bijection. Then, \Builder has a strategy guaranteeing that after 
	$\left( \Delta/2 + o(\Delta) \right) n$ rounds of the process, w.h.p. there will be a $\varphi$-good set $A \subseteq V(H)$ of size at least $(1 - \alpha) n$. 
\end{lemma}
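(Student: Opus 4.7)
The plan is to have \Builder play Strategy~\ref{strategy:initial} with an orientation $D$ of $H$ furnished by Lemma~\ref{lem::orientations}(a), so that $d_D^+(u) \le \lfloor \Delta/2 \rfloor + 1$ for every $u \in V(H)$. I would run this strategy for $m = (\Delta/2 + o(\Delta)) n$ rounds, where the $o(\Delta)$ slack is chosen exactly so as to meet the hypotheses of Lemma~\ref{lem:number_of_appearances}. The natural candidate for the $\varphi$-good set is
\[
A := \bigl\{\, x \in V(H) : \varphi(x)\ \text{is offered at least } \lfloor \Delta/2 \rfloor + 1 \ \text{times in the first } m \ \text{rounds}\,\bigr\}.
\]

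To lower-bound $|A|$, I would invoke Lemma~\ref{lem:number_of_appearances} with $d = \lfloor \Delta/2 \rfloor$ and the given $\alpha$; the hypothesis $\Delta \gg \log(1/\alpha)$ guarantees the required $d \gg \log(1/\alpha)$, and the lemma's budget $(d + o(d))n$ agrees with our $m$. It follows that w.h.p.\ fewer than $\alpha n$ vertices of $[n]$ are offered at most $\lfloor \Delta/2 \rfloor$ times, and hence $|A| \ge (1 - \alpha) n$ w.h.p.

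The remaining task is to verify that $A$ is indeed $\varphi$-good. For each $x \in A$, the vertex $\varphi(x)$ is offered at least $\lfloor \Delta/2 \rfloor + 1 \ge d_D^+(x)$ times, and at every such offering Strategy~\ref{strategy:initial} claims the edge from $\varphi(x)$ to some image $\varphi(y)$ with $y \in N_D^+(x)$ whenever a previously unclaimed such neighbour remains. Consequently, after the $(\lfloor \Delta/2 \rfloor + 1)$-st offering of $\varphi(x)$, every edge $\{\varphi(x), \varphi(y)\}$ with $y \in N_D^+(x)$ lies in \Builder's graph. Thus, for any edge $\{x,y\} \in E(H)$ with $x, y \in A$, orienting it via $D$ gives (say) $y \in N_D^+(x)$, and so the image edge $\{\varphi(x), \varphi(y)\}$ is present, as required.

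I do not anticipate a genuine obstacle here: the statement is essentially an online packaging of the offline argument behind Proposition~\ref{prop::offline}, and the only subtlety is matching the parameters of Lemma~\ref{lem:number_of_appearances} to the budget $(\Delta/2 + o(\Delta))n$ and the goodness threshold $\lfloor \Delta/2 \rfloor + 1$ prescribed by the orientation. The real difficulty in proving Theorem~\ref{thm:main} is deferred to the second stage (Lemma~\ref{lem:main}), where \Builder must upgrade the $(1 - \alpha)n$ guarantee above to all of $V(H)$ by dynamically rerouting the embedding $\varphi$ — this is what the role-switching mechanism in Strategy~\ref{strategy} is designed to achieve.
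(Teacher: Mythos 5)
Your proof is correct and uses the same machinery as the paper's: Strategy~\ref{strategy:initial} with an orientation from Lemma~\ref{lem::orientations}(a), with the size of the good set controlled by Lemma~\ref{lem:number_of_appearances}. You have in fact streamlined one step: you observe directly that the set $A$ of vertices $x$ whose image $\varphi(x)$ was offered at least $\lfloor\Delta/2\rfloor+1$ times is already $\varphi$-good (for any $H$-edge $\{x,y\}\subseteq A$ oriented $x\to y$ in $D$, the tail $x$ lies in $A$, so all out-edges of $x$ under $D$ get claimed), whereas the paper applies Lemma~\ref{lem:number_of_appearances} with the smaller parameter $\alpha/(2\Delta)$ and then further deletes the set $B$ of vertices with a $D$-in-neighbour outside $A'$, in order to show the (unneeded, for the statement as given) stronger property that every $x\in A'\setminus B$ is adjacent to \emph{all} of $\varphi(N_H(x))$; both routes deliver the lemma, and yours is slightly tighter in the accounting.
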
 

\begin{proof}
	\Builder executes Strategy \ref{strategy:initial} for 
	$\ell = 
	\left( \Delta/2 + o(\Delta) \right) n$ rounds. 
	Let $A'$ be the set of all vertices $x \in V(H)$ such that $\varphi(x)$ was offered at least $\lfloor \Delta/2 \rfloor + 1$ times in the course of the $\ell$ rounds. Apply Lemma \ref{lem:number_of_appearances} with $d = \lfloor \Delta/2 \rfloor$ and with $\frac{\alpha}{2\Delta}$ in place of $\alpha$, to conclude that w.h.p. we have 
	$|A'| \geq (1 - \frac{\alpha}{2\Delta})n$. 
	Here we use the assumption that $\Delta \gg \log(1/\alpha)$, which is necessary in order to apply Lemma \ref{lem:number_of_appearances} with the above parameters. 
	From now on we assume that $|A'| \geq (1 - \frac{\alpha}{2\Delta})n$ (which happens with high probability).
	
	Now, let $B = \{u \in V(H) : N_D^-(u, V(H) \setminus A') \neq \emptyset\}$, where $D$ is the orientation from Strategy \ref{strategy:initial}. It readily follows from the description of \Builder's strategy (namely, Strategy \ref{strategy:initial}) that if $x \in A' \setminus B$, then after $\ell$ rounds of the process, $\varphi(x)$ is adjacent in \Builder's graph to every vertex of $\varphi\left( N_H(x) \right)$. Thus, 
	$A := A' \setminus B$ is $\varphi$-good. Since the maximum degree of $H$ is $\Delta$, it follows that $|B| \leq \Delta \cdot |V(H) \setminus A'| \leq \frac{\alpha n}{2}$. We conclude that w.h.p. 
	$|A| \geq (1 - \alpha) n$, completing the proof. 
\end{proof}

\subsection{Improving the Embedding} \label{subsec:main}
	In this section we introduce and analyze \Builder's strategy for the second stage (see Strategy~\ref{strategy} below). 
	Our starting point is the state of \Builder's graph immediately after applying the strategy given by Lemma \ref{lem:initial}. (The value of the parameter $\alpha$, with which this lemma is applied, will be chosen later; the bijection $\varphi$ can be chosen arbitrarily.)
	Our goal is to iteratively update $\varphi$, so as to maintain a $\varphi$-good set which gradually increases in size until it equals $V(H)$. 
	
	Before delving into the details, let us illustrate the idea behind Strategy \ref{strategy} by considering the following ``toy" example: suppose that at some point during the process, \Builder has already managed to obtain a bijection $\varphi : V(H) \rightarrow [n]$ for which there is a $\varphi$-good set $A$ of size $n-1$. Let $b$ denote the unique element of $V(H) \setminus A$. The fact that $A$ is $\varphi$-good means that $\varphi(A)$ spans a copy of $H[A]$ in \Builder's graph. So in order to make $\varphi$ an embedding of $H$ into \Builder's graph, it remains to connect $\varphi(b)$ to all of the vertices in $\varphi(N_H(b))$. A naive way of doing this would be for \Builder to wait until $\varphi(b)$ will have been offered $d_H(b)$ times, and at each such time, to connect $\varphi(b)$ to a new vertex in $\varphi(N_H(b))$. This, however, will not work, since the probability that $\varphi(b)$ is offered (even once) in the course of $O(n)$ rounds does not tend to $1$. So instead, \Builder will try to find another vertex in $[n]$ to ``play" the role $\varphi(b)$, and to have $\varphi(b)$ play the role which was previously played by that other vertex. To this end, \Builder fixes (a large number of) vertices $a_1,\dots,a_m \in A$ which are not adjacent to $b$. 
	(We note that in order to make the strategy work, we need the additional assumption that the neighbourhoods of $a_1, \ldots, a_m$ are pairwise-disjoint, but the reader may ignore this issue at the moment.)
	Now \Builder acts as follows: each time a vertex of $\{\varphi(a_1), \ldots, \varphi(a_m)\}$ is offered, \Builder connects it to some new vertex of $\varphi(N_H(b))$; and each time a vertex of $\bigcup_{i=1}^m \varphi(N_H(a_i))$ is offered, \Builder connects it to $\varphi(b)$. 
	Now, if at some point there is an index $1 \leq i \leq m$ such that $\varphi(a_i)$ has already been offered at least $\Delta(H)$ times and every vertex in $\varphi(N_H(a_i))$ has already been offered at least once, then at this point $\varphi(a_i)$ is adjacent in \Builder's graph to every vertex of $\varphi(N_H(b))$, and $\varphi(b)$ is adjacent in \Builder's graph to every vertex of $\varphi(N_H(a_i))$. Hence, \Builder can now safely ``switch" the roles of $\varphi(a_i)$ and $\varphi(b)$. Formally, \Builder defines a new bijection $\varphi' : V(H) \rightarrow [n]$ by setting $\varphi'(b) = \varphi(a_i)$, $\varphi'(a_i) = \varphi(b)$, and $\varphi'(x) = \varphi(x)$ for every $x \in V(H) \setminus \{b, a_i\}$. Then $\varphi'$ is an embedding of $H$ into \Builder's graph. A key point of this method is that, since there are many ``candidates" for the role of $b$ (i.e. the vertices $a_1, \ldots, a_m$), it is very likely that one of them will indeed be chosen to be switched with $b$.    
	
	We now give a precise definition of the setting in which we will apply our ``role-switching" strategy. 
\begin{setting} \label{setting}
	We are given a graph\footnote{We think of $G$ as \Builder's graph immediately after employing the strategy whose existence is guaranteed by Lemma~\ref{lem:initial} for the number of rounds specified in that lemma.} $G$ on the vertex-set $[n]$, a bijection $\varphi : V(H) \rightarrow [n]$, and a $\varphi$-good set $A \subseteq V(H)$. We set $B = V(H) \setminus A$ and write $B = \{b_1, \ldots, b_r\}$.  
	We are also given an integer $m > 0$ and distinct vertices $a_{i,k} \in A$, where $1 \leq i \leq r$ and $1 \leq k \leq m$. Finally, we are given an integer\footnote{The reason for allowing flexibility in the choice of $d$ (as opposed to simply letting $d$ be the maximum degree of $H$), is that in one application (namely, Theorem~\ref{thm:trees}), we will be able to make sure that the degrees of the vertices 
	$b_i, a_{i,1}, \ldots, a_{i,m}$ ($1 \leq i \leq r$)
	are much smaller than $\Delta(H)$, which will be crucial for obtaining the desired bound.} 
	$d$ such that $d_H(x) \leq d$ for every $x \in \bigcup_{i=1}^r \{b_i, a_{i,1}, \ldots, a_{i,m}\}$. We assume that the following two properties are satisfied. 
	\begin{enumerate}
		\item There are no edges in $H$ between 
		$\left\{a_{i,k} : (i,k) \in [r] \times [m]\right\}$ and $B$. 
		\item The sets $\{a_{i,k}\} \cup N_H(a_{i,k})$ are pairwise-disjoint, where $(i,k)$ run over all pairs in $[r] \times [m]$. 
	\end{enumerate} 
\end{setting} 
%

Throughout the second stage of his strategy, \Builder maintains and updates sets $A_t \subseteq V(H)$ and bijections $\varphi_t : V(H) \rightarrow [n]$. Initially, $A_0 = A$ and $\varphi_0 = \varphi$. For every positive integer $t$, the pair $(A_t, \varphi_t)$ will be defined immediately after round $t$ of the second stage. We also set $B_t = V(H) \setminus A_t$ (so in particular, $B_0 = B$). 
Finally, we let $G_t$ denote \Builder's graph after exactly $t$ rounds of the second stage (so in particular, $G_0 = G$). 
We are now ready to describe \Builder's strategy for round $t$ of the second stage (for any integer $t \geq 1$). 

\begin{strategy} \label{strategy}
	Let $w_t \in [n]$ be the random vertex \Builder is offered at round $t$ of the second stage.
	\begin{enumerate}
		\item If some pair $(i,k) \in [r] \times [m]$ is such that 
		$w_t \in \varphi\left( \{a_{i,k}\} \cup N_H(a_{i,k}) \right)$ and $b_i \in B_{t-1}$, then do:
		\begin{enumerate}
			\item If $w_t \in \varphi(N_H(a_{i,k}))$, then claim the edge $\{w_t,\varphi(b_i)\}$.
			\item If $w_t = \varphi(a_{i,k})$, then choose an arbitrary vertex $u \in \varphi_{t-1} \left( N_H(b_i) \cap A_{t-1} \right)$ which is not adjacent to $\varphi(a_{i,k})$ in $G_{t-1}$, and claim the edge $\{\varphi(a_{i,k}), u\}$. 
			\item Check whether $\varphi(b_i)$ is adjacent in $G_t$ to every vertex of $\varphi(N_H(a_{i,k}))$ and, moreover, $\varphi(a_{i,k})$ is adjacent in $G_t$ to every vertex of $\varphi_{t-1} \left( N_H(b_i) \cap A_{t-1} \right)$. If so, then set $A_t = A_{t-1} \cup \{b_i\}$ (and hence $B_t = V(H) \setminus A_t = B_{t-1} \setminus \{b_i\}$), and 
			$$
			\varphi_t(x) = 
			\begin{cases}
			\varphi_{t-1}(b_i) & x = a_{i,k}, \\
			\varphi_{t-1}(a_{i,k}) & x = b_i, \\
			\varphi_{t-1}(x) & x \in V(H) \setminus \{a_{i,k}, b_i\}.
			\end{cases}
			$$
			\item 
			Otherwise (i.e., if the condition in Item 1(c) does not hold), set $A_t = A_{t-1}$ and $\varphi_t = \varphi_{t-1}$.
		\end{enumerate}
		\item Else (i.e., if there is no pair $(i,k) \in [r] \times [m]$ which satisfies the condition in Item 1), claim an arbitrary edge which is incident with $w_t$; this edge will not be considered as part of \Builder's graph in our analysis. Set $A_t = A_{t-1}$ and $\varphi_t = \varphi_{t-1}$.
	\end{enumerate}
\end{strategy}
 
The operation of defining $A_t$ and $\varphi_t$ as done in Item 1(c), is referred to as {\em switching $a_{i,k}$ and $b_i$}. This name stems from the fact that we swap the vertices which play the roles of $b_i$ and $a_{i,k}$ in our current partial embedding $\varphi_t$ of $H$ into $G_t$. Evidently, switching $a_{i,k}$ and $b_i$ does not change the role of any of the other $n-2$ vertices. 
Switching $a_{i,k}$ and $b_i$ is only done if, roughly speaking, the vertex currently playing the role of $b_i$ can play the role of $a_{i,k}$, and the vertex currently playing the role of $a_{i,k}$ can play the role of $b_i$; the exact condition for switching is stated in Item 1(c) above. Note that the only pairs of vertices which can be switched are of the form $(a_{i,k}, b_i)$ for some $1 \leq i \leq r$ and $1 \leq k \leq m$. 
In the following lemma we collect several simple facts regarding Strategy~\ref{strategy}.
\begin{lemma} \label{lem:strategy_basic_facts}
	Consider the execution of Strategy~\ref{strategy} for $\ell$ consecutive rounds, where $\ell$ is an arbitrary positive integer. Then the following statements hold.
	\begin{enumerate}
		\item If $a_{i,k}$ and $b_i$ were switched in round $t$, then none of the vertices $b_i, a_{i,1}, \ldots, a_{i,m}$ was switched at any other round. 
		\item If $x \in V(H)$ was not switched at any round, then $\varphi_s(x) = \varphi(x)$ for every $0 \leq s \leq \ell$. If $x$ was switched in round $t$, then $\varphi_s(x) = \varphi(x)$ for every $0 \leq s \leq t-1$ and $\varphi_s(x) = \varphi_t(x)$ for every $t \leq s \leq \ell$.  
		\item For every $x \in V(H) \setminus \left(\bigcup_{i=1}^r \{b_i, a_{i,1}, \ldots, a_{i,m}\} \right)$, we have $\varphi_s(x) = \varphi(x)$ for every $0 \leq s \leq \ell$. 
		\item Fix $1 \leq i \leq r$, and define $N_t := \varphi_{t-1}\left( N_H(b_i) \cap A_{t-1} \right)$ for every $1 \leq t \leq \ell$. Then, 
		$N_t \subseteq N_{t'}$ holds for every $1 \leq t < t' \leq \ell$.
	\end{enumerate}
\end{lemma}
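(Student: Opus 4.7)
The plan is to verify all four assertions by unravelling the definition of Strategy~\ref{strategy}. The first claim is the only one requiring genuine reasoning and provides the structural backbone for the rest, which then follow as essentially bookkeeping consequences. To prove claim~1, I will use two key observations about the strategy: first, the index pair $(i,k)$ in Item~1 permanently pairs $a_{i,k}$ with $b_i$, so $a_{i,k}$ is never switched with any $b_j$ for $j \neq i$; second, the sequence $A_0 \subseteq A_1 \subseteq \cdots$ is monotonically non-decreasing, since the only place the set $A$ is altered is Item~1(c), which only adds elements. Since the switching condition in Item~1 requires $b_i \in B_{t-1}$, once $b_i$ enters $A$ it can never again participate in a switch. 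Hence $b_i$ is switched at most once, and this in turn forces at most one of $a_{i,1}, \ldots, a_{i,m}$ to ever be switched --- exactly the one paired with $b_i$ at that single switch.

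Claim~2 then follows directly from the definition of $\varphi_t$ in Item~1(c), which modifies $\varphi$ only in the round of a switch and only at the two vertices being switched. Combined with claim~1 (each relevant vertex is switched at most once), the assertion that $\varphi_s(x)$ equals $\varphi(x)$ before the (unique) switching round of $x$ and equals $\varphi_t(x)$ afterwards is immediate. Claim~3 is then a trivial consequence of claim~2, since the only vertices that can ever be switched are those in $\bigcup_{i=1}^r \{b_i, a_{i,1}, \ldots, a_{i,m}\}$, by the design of Strategy~\ref{strategy}.

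For claim~4, the plan is to show $N_t \subseteq N_{t+1}$ for each $1 \leq t < \ell$ and iterate. If round $t$ takes Item~2 or Item~1(d), both $A$ and $\varphi$ are unchanged, so trivially $N_{t+1} = N_t$. If instead some pair $(a_{j,k}, b_j)$ is switched at round $t$, then $\varphi_t$ differs from $\varphi_{t-1}$ only at $a_{j,k}$ and $b_j$. Now $b_j \notin A_{t-1}$ since $b_j \in B_{t-1}$, and the crucial input --- Property~1 of Setting~\ref{setting} --- gives that there are no edges in $H$ between any $a_{j,k}$ and any vertex of $B$, so in particular $a_{j,k} \notin N_H(b_i)$ (regardless of whether $j = i$). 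Hence $\varphi_t$ agrees with $\varphi_{t-1}$ on $N_H(b_i) \cap A_{t-1}$, and combined with $A_{t-1} \subseteq A_t$ this gives
$$\varphi_{t-1}(N_H(b_i) \cap A_{t-1}) = \varphi_t(N_H(b_i) \cap A_{t-1}) \subseteq \varphi_t(N_H(b_i) \cap A_t),$$
i.e.\ $N_t \subseteq N_{t+1}$. No serious obstacle is anticipated here; this is a routine verification lemma and the only mild subtlety is invoking Property~1 of Setting~\ref{setting} in claim~4 to control the image of the vertex $a_{j,k}$ whose role changes during the switch.
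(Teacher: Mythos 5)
Your proofs of Items~1--3 track the paper's argument closely: the key ingredients are the monotonicity of $(A_t)$, the fact that a switch puts $b_i$ into $A$ permanently (so the eligibility condition $b_i \in B_{t-1}$ can only hold up to its unique switching round), and the observation that $\varphi_t$ only changes at the two switched vertices. These are the same observations the paper makes, in essentially the same order.

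For Item~4, however, you take a genuinely different route. The paper proves $N_t \subseteq N_{t'}$ directly: it picks $v \in N_t$, sets $x := \varphi_{t-1}^{-1}(v)$, and splits into cases according to whether $x \in A$ or $x \in B$. In the first case it invokes Item~3 (via Item~1 of Setting~\ref{setting}) to conclude $\varphi_s(x) = \varphi(x)$ for all $s$; in the second case $x = b_j$ for some $j \neq i$, and it invokes Item~2 together with the fact that $b_j$ was switched before round $t$. Your proof instead proceeds by a single-step induction, showing $N_t \subseteq N_{t+1}$, and the case analysis is on what happens in round $t$ rather than on the membership of the preimage $x$. The crux of your argument --- that the two vertices $a_{j,k}$ and $b_j$ that change roles at round $t$ are both absent from $N_H(b_i) \cap A_{t-1}$ (the former because of Property~1 of Setting~\ref{setting}, the latter because $b_j \in B_{t-1}$) --- is correct and cleanly isolates the use of the setting's hypotheses. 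A mild advantage of your version is that it does not rely on Items~2 and~3 of the lemma, so Item~4 stands independently; the paper's version, in exchange, avoids an explicit induction. Both are correct.
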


\begin{proof}
	We start with Item 1. Switching $a_{i,k}$ and $b_i$ in round $t$ forces $b_i \in A_t$. It then follows by the description of Strategy~\ref{strategy} that $b_i \in A_s$ for every $t < s \leq \ell$, making the condition in Item 1 of Strategy~\ref{strategy} false for $b_i$ in each of the rounds $t+1,\dots,\ell$. Hence, if $a_{i,k}$ and $b_i$ were switched in round $t$, then none of the vertices $b_i, a_{i,1}, \ldots, a_{i,m}$ could have been switched in any subsequent round. Moreover, none of these vertices could have been switched in any round prior to round $t$ as this would have made $b_i$ ineligible for switching in round $t$. This proves Item 1.
	Item 2 can be easily proved by induction, using Item 1 and the definition of the functions $(\varphi_s : 0 \leq s \leq \ell)$ in Strategy~\ref{strategy}.
	Item 3 follows from Item 2 and the fact that only vertices in $\bigcup_{i=1}^r \{b_i, a_{i,1}, \ldots, a_{i,m}\}$ can be switched. 
	
	Let us prove Item 4. Fix $1 \leq i \leq r$ and $1 \leq t < t' \leq \ell$, and let $v \in N_t$ be an arbitrary vertex; we will prove that $v \in N_{t'}$. Set $x = \varphi_{t-1}^{-1}(v)$, and note that $x \in N_H(b_i) \cap A_{t-1} \subseteq N_H(b_i) \cap A_{t'-1}$. 
	We will show that $\varphi_{t'-1}(x) = \varphi_{t-1}(x) = v$, which would imply that $v \in \varphi_{t'-1}\left( N_H(b_i) \cap A_{t'-1} \right) = N_{t'}$, as required.
	Assume first that $x \in A$. It then follows by Item 1 of Setting~\ref{setting} that $x \notin \bigcup_{j=1}^r \{b_j, a_{j,1}, \ldots, a_{j,m}\}$. By Item 3 of Lemma~\ref{lem:strategy_basic_facts} we then have $\varphi_s(x) = \varphi(x)$ for every $0 \leq s \leq \ell$; in particular, $\varphi_{t'-1}(x) = \varphi_{t-1}(x) = v$, as claimed. Suppose now that $x \in B$, that is, $x = b_j$ for some $1 \leq j \neq i \leq r$. Since $b_j = x \in A_{t-1}$, the vertex $b_j$ must have been switched with some $a_{j,k}$ prior to round $t$. Now, Item 2 of Lemma~\ref{lem:strategy_basic_facts} implies that $\varphi_{t'-1}(x) = \varphi_{t-1}(x) = v$ in this case as well. 
\end{proof}

\noindent
The following lemma can be thought of as a proof of the ``correctness" of Strategy~\ref{strategy}. 
\begin{lemma}\label{lem:algo_correctness}
	For every non-negative integer $t$, the set $A_t$ is $(G_t, \varphi_t)$-good.
\end{lemma}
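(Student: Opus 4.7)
The plan is to proceed by induction on $t$. The base case $t=0$ holds by hypothesis, since Setting~\ref{setting} supplies $A_0 = A$ being $(G_0, \varphi_0)$-good. For the inductive step, distinguish two subcases according to whether a switch occurred in round $t$. If no switch occurred, then $A_t = A_{t-1}$ and $\varphi_t = \varphi_{t-1}$, so $(G_t, \varphi_t)$-goodness of $A_t$ follows immediately from $(G_{t-1}, \varphi_{t-1})$-goodness of $A_{t-1}$ together with the trivial monotonicity $E(G_{t-1}) \subseteq E(G_t)$.

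The substantive case is when a pair $(a_{i,k}, b_i)$ is switched in round $t$. Here $A_t = A_{t-1} \cup \{b_i\}$, and $\varphi_t$ differs from $\varphi_{t-1}$ only in that $\varphi_t(a_{i,k}) = \varphi_{t-1}(b_i)$ and $\varphi_t(b_i) = \varphi_{t-1}(a_{i,k})$; by Item~2 of Lemma~\ref{lem:strategy_basic_facts} (since neither $a_{i,k}$ nor $b_i$ was switched before round $t$), these equal $\varphi(b_i)$ and $\varphi(a_{i,k})$, respectively. I would fix an arbitrary edge $\{x, y\} \in E(H[A_t])$ and case-split on its intersection with $\{a_{i,k}, b_i\}$. (i) If $\{x, y\} \cap \{a_{i,k}, b_i\} = \emptyset$, both $\varphi_t(x) = \varphi_{t-1}(x)$ and $\varphi_t(y) = \varphi_{t-1}(y)$, so the inductive hypothesis together with $E(G_{t-1}) \subseteq E(G_t)$ does the job. (ii) The case $\{x, y\} = \{a_{i,k}, b_i\}$ is ruled out by Item~1 of Setting~\ref{setting}, which forbids edges in $H$ between the $a_{j,k'}$'s and $B$. (iii) If $x = a_{i,k}$ and $y \in A_t \setminus \{a_{i,k}, b_i\}$, then $y \in N_H(a_{i,k})$, so Item~1(c) of Strategy~\ref{strategy} guarantees $\{\varphi(b_i), \varphi(y)\} \in E(G_t)$. (iv) If $x = b_i$ and $y \in A_t \setminus \{a_{i,k}, b_i\}$, then $y \in N_H(b_i) \cap A_{t-1}$, so Item~1(c) guarantees that $\varphi(a_{i,k})$ is adjacent in $G_t$ to $\varphi_{t-1}(y) = \varphi_t(y)$.

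The main subtlety lies in case (iii): the switching criterion produces an edge from $\varphi(b_i)$ to $\varphi(y)$, but what I actually need is $\{\varphi_t(a_{i,k}), \varphi_t(y)\} = \{\varphi(b_i), \varphi_{t-1}(y)\} \in E(G_t)$. Thus I must verify that $\varphi_{t-1}(y) = \varphi(y)$, i.e.\ that $y$ has never been switched. This is where the two properties of Setting~\ref{setting} interact crucially: Item~1 (no edges between the $a_{j,k'}$'s and $B$) together with $\{a_{i,k}, y\} \in E(H)$ forces $y \notin B$, so $y \in A$; the pairwise disjointness in Item~2, combined with $y \in N_H(a_{i,k})$, forces $y \notin \{a_{j,k'} : (j, k') \in [r] \times [m]\}$. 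Consequently $y$ lies outside $\bigcup_{j=1}^r \{b_j, a_{j,1}, \ldots, a_{j,m}\}$, and Item~3 of Lemma~\ref{lem:strategy_basic_facts} yields $\varphi_{t-1}(y) = \varphi(y)$, closing the induction.
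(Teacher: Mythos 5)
Your proof is correct and follows essentially the same approach as the paper's: induction on $t$, with the inductive step split by whether a switch occurred, and the substantive switch case handled by the same four-way case analysis on how the edge $\{x,y\}$ meets $\{a_{i,k},b_i\}$, using Items~1 and~2 of Setting~\ref{setting} and Item~3 of Lemma~\ref{lem:strategy_basic_facts} in exactly the same way to pin down $\varphi_{t-1}(y)=\varphi(y)$ when $y\in N_H(a_{i,k})$. The subtlety you flag in case (iii) is precisely the point the paper's proof also makes explicit.
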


\begin{proof}
	The proof is by induction on $t$. The base case $t = 0$ is immediate from our assumption that $A$ is $\varphi$-good (see Setting \ref{setting}), and the fact that $A_0 = A$, $\varphi_0 = \varphi$ and $G_0 = G$. For the induction step, fix some $t \geq 1$ and suppose that the assertion of the lemma holds for $t-1$. 
	Consider the execution of Strategy~\ref{strategy} in round $t$. If either the condition in Item 1 or the condition in Item 1(c) does not hold, then there is nothing to prove, since in that case $A_t = A_{t-1}$ and $\varphi_t = \varphi_{t-1}$. Suppose then that both of these conditions hold, and let $(i,k) \in [r] \times [m]$ be the pair satisfying the condition in Item 1 of Strategy \ref{strategy}. That is, we assume that $a_{i,k}$ and $b_i$ were switched in round $t$. 
	
	We need to show that for every $x,y \in A_t$, if $\{x,y\} \in E(H)$, then $\{\varphi_t(x),\varphi_t(y)\} \in E(G_t)$. Hence, let $x,y \in A_t$ be such that $\{x,y\} \in E(H)$. Note that $\{x,y\} \neq \{a_{i,k}, b_i\}$, as $\{a_{i,k}, b_i\} \notin E(H)$ by Item 1 in Setting~\ref{setting}. If $x,y \notin \{a_{i,k}, b_i\}$ then we have $x,y \in A_{t-1}$, $\varphi_t(x) = \varphi_{t-1}(x)$ and $\varphi_t(y) = \varphi_{t-1}(y)$; so our assertion that 
	$\{\varphi_t(x),\varphi_t(y)\} \in E(G_t)$ follows from the induction hypothesis for $t-1$. Therefore, without loss of generality, we may assume that $x \in \{a_{i,k}, b_i\}$ and $y \notin \{a_{i,k}, b_i\}$. This assumption implies that $\varphi_t(y) = \varphi_{t-1}(y)$ and that $y \in A_{t-1}$.  
	
	Suppose first that $x = a_{i,k}$. Since $\{x,y\} \in E(H)$, it follows that $y \in N_H(x) = N_H(a_{i,k})$. Now Items 1 and 2 of Setting~\ref{setting} imply that $y \notin \bigcup_{i=1}^r \{b_i, a_{i,1}, \ldots, a_{i,m}\}$, which in turn implies that $\varphi_t(y) = \varphi(y) \in \varphi(N_H(a_{i,k}))$, see Item 3 of Lemma~\ref{lem:strategy_basic_facts}. 
	As for $x$, it follows from the definition of $\varphi_t$ in Item 1(c) of Strategy~\ref{strategy} that $\varphi_t(x) = \varphi_t(a_{i,k}) = \varphi_{t-1}(b_i) = \varphi(b_i)$, where the last equality holds by Item 2 of Lemma~\ref{lem:strategy_basic_facts}. Since $a_{i,k}$ and $b_i$ were switched in round $t$, it follows by Item 1(c) of Strategy~\ref{strategy} that $\varphi_t(x) = \varphi(b_i)$ is adjacent in $G_t$ to all vertices of $\varphi(N_H(a_{i,k}))$. In particular, $\{\varphi_t(x), \varphi_t(y)\} \in E(G_t)$ as required. 
	
	Suppose now that $x = b_i$. Since $\{x,y\} \in E(H)$, it follows that $y \in N_H(b_i)$. Therefore, 
	$\varphi_t(y) = \varphi_{t-1}(y) \in \varphi_{t-1}(N_H(b_i) \cap A_{t-1})$. Observe that $\varphi_t(x) = \varphi_t(b_i) = \varphi_{t-1}(a_{i,k}) = \varphi(a_{i,k})$, where the last equality holds by Item 2 of Lemma~\ref{lem:strategy_basic_facts}. Since $a_{i,k}$ and $b_i$ were switched in round $t$, it follows by Item 1(c) of Strategy~\ref{strategy} that $\varphi(a_{i,k})$ is adjacent in $G_t$ to all vertices of $\varphi_{t-1}(N_H(b_i) \cap A_{t-1})$. In particular, $\{\varphi_t(x), \varphi_t(y)\} \in E(G_t)$. This concludes the proof of the lemma.
\end{proof}

In the following three lemmas, we consider the execution of Strategy~\ref{strategy} for $\ell$ rounds for some positive integer $\ell$. For every $1 \leq i \leq r$ and $1 \leq k \leq m$, we denote by $\mathcal{A}_{i,k}$ the event: ``$\varphi(a_{i,k})$ was offered at least $d$ times {\bf after} each of the vertices in $\varphi(N_H(a_{i,k}))$ had already been offered''. In other words, $\mathcal{A}_{i,k}$ is the event that there are indices $1 \leq t_1 < \ldots < t_q < s_1 < \ldots < s_d \leq \ell$, where $q = |N_H(a_{i,k})|$, such that each element of $\varphi(N_H(a_{i,k}))$ was offered in one of the rounds $t_1, \ldots, t_q$, and $\varphi(a_{i,k})$ was offered in each of the rounds $s_1, \ldots, s_d$.   

\begin{lemma} \label{lem:suff_condition_for_switch}
Let $1 \leq i \leq r$. If there exists some $1 \leq k \leq m$ for which $\mathcal{A}_{i,k}$ occurred, then $b_i \in A_{\ell}$. 
\end{lemma}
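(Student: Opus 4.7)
The plan is to show that if $\mathcal{A}_{i,k}$ occurs, with witnessing rounds $t_1 < \cdots < t_q < s_1 < \cdots < s_d \leq \ell$, then $a_{i,k}$ and $b_i$ get switched at or before round $s_d$. If $b_i \in A_t$ for some $t \leq s_d$, we are already done, so I may assume $b_i \in B_{t-1}$ for every $t \leq s_d$. By Items 1 and 2 of Lemma \ref{lem:strategy_basic_facts}, this also forces none of $a_{i,1},\ldots,a_{i,m}$ to be switched by round $s_d$, so $\varphi_{t-1}(b_i)=\varphi(b_i)$ and $\varphi_{t-1}(a_{i,k})=\varphi(a_{i,k})$ throughout this range.

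First I would check that at each round $t_j$ ($1 \le j \le q$), Item 1(a) of Strategy \ref{strategy} fires for the pair $(i,k)$: we have $w_{t_j}\in \varphi(N_H(a_{i,k}))$ (and $(i,k)$ is the unique such pair, by Item 2 of Setting \ref{setting}) and $b_i \in B_{t_j-1}$, so the edge $\{w_{t_j},\varphi(b_i)\}$ is claimed. Consequently, by round $t_q$ the vertex $\varphi(b_i)$ is adjacent in \Builder's graph to every vertex of $\varphi(N_H(a_{i,k}))$.

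Next I would handle the rounds $s_1,\ldots,s_d$. Writing $N_t := \varphi_{t-1}(N_H(b_i) \cap A_{t-1})$, at each $s_j$ we have $w_{s_j}=\varphi(a_{i,k})$, and again the pair $(i,k)$ triggers Item 1(b). If at some $s_j$ the vertex $\varphi(a_{i,k})$ is already adjacent in $G_{s_j}$ to every vertex of $N_{s_j}$, then the check in Item 1(c) succeeds (its other clause being ensured by the previous paragraph, since $t_q < s_j$), and the switch occurs. Otherwise, at every such $s_j$ the strategy adds one brand-new neighbor of $\varphi(a_{i,k})$ from the set $N_{s_j} \subseteq N_{s_d}$, where the inclusion is Item 4 of Lemma \ref{lem:strategy_basic_facts}.

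The decisive step is the counting argument: since $|N_{s_d}| \leq |N_H(b_i)| \leq d$, after the $d$ rounds $s_1,\ldots,s_d$ the vertex $\varphi(a_{i,k})$ will have acquired $d$ distinct new neighbors lying inside $N_{s_d}$, forcing it to be adjacent in $G_{s_d}$ to \emph{all} of $N_{s_d}$; at that moment Item 1(c) at round $s_d$ triggers the switch of $a_{i,k}$ and $b_i$, so $b_i \in A_{s_d} \subseteq A_\ell$. The proof is largely mechanical verification that Strategy \ref{strategy} behaves as intended at each designated round; the only substantive point is the appeal to the monotonicity $N_t \subseteq N_{t'}$ from Item 4 of Lemma \ref{lem:strategy_basic_facts}, which guarantees that the $d$ newly added edges at $\varphi(a_{i,k})$ all land inside a single set of size at most $d$.
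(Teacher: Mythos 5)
Your proof is correct and follows essentially the same approach as the paper's: observe that Item 1(a) forces $\varphi(b_i)$ to be adjacent to all of $\varphi(N_H(a_{i,k}))$ by round $t_q$, that Item 1(b) adds a fresh neighbor of $\varphi(a_{i,k})$ in $N_{s_j}$ at each round $s_j$ where a switch does not fire, and then invoke the monotonicity $N_{s_j}\subseteq N_{s_d}$ together with $|N_{s_d}|\le d$ to force the Item 1(c) check to succeed by round $s_d$. The only difference is cosmetic: the paper frames the argument as a proof by contradiction (assume $b_i\in B_\ell$), whereas you argue directly under the harmless assumption that no switch involving $b_i$ has occurred before round $s_d$.
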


\begin{proof}
	Suppose for a contradiction that $\mathcal{A}_{i,k}$ occurred for some $1 \leq i \leq r$ and $1 \leq k \leq m$, but $b_i \notin A_{\ell}$, i.e., $b_i \in B_{\ell}$. This means that $b_i$ was not switched at any of the $\ell$ rounds for which we execute Strategy~\ref{strategy}. Set $q = |N_H(a_{i,k})|$ and let $1 \leq t_1 < \ldots < t_q < s_1 < \ldots < s_d \leq \ell$ be the round numbers appearing in the definition of $\mathcal{A}_{i,k}$. Item 1(a) of Strategy~\ref{strategy} dictates that whenever a vertex from $\varphi(N_H(a_{i,k}))$ is sampled, \Builder connects it to $\varphi(b_i)$. This implies that, for every $t_q \leq t \leq \ell$, every vertex of $\varphi(N_H(a_{i,k}))$ is adjacent in $G_t$ to $\varphi(b_i)$.
	
As in Lemma~\ref{lem:strategy_basic_facts}, we let $N_t = \varphi_{t-1} \left( N_H(b_i) \cap A_{t-1} \right)$ for each $1 \leq t \leq \ell$. Suppose first that there exists some $1 \leq j \leq d$ such that $\varphi(a_{i,k})$ is adjacent in $G_{s_j}$ to every vertex of $N_{s_j}$. Then by Item 1(c) of Strategy~\ref{strategy}, \Builder would have switched $a_{i,k}$ and $b_i$ in round $s_j$, which would contradict our assumption that $b_i$ was never switched. Hence, for every $1 \leq j \leq d$ there exists a vertex of $N_{s_j}$ which is not adjacent in $G_{s_j}$ to $\varphi(a_{i,k})$. It follows by Item 1(b) of Strategy~\ref{strategy} that, in round $s_j$, \Builder claims an edge $\{\varphi(a_{i,k}), u_j\}$ for some $u_j \in N_{s_j}$ which is not adjacent to $\varphi(a_{i,k})$ in $G_{s_j-1}$. Note that $u_1, \ldots, u_d$ are distinct. 
It follows by Item 4 of Lemma~\ref{lem:strategy_basic_facts} that $u_1, \ldots, u_d \in N_{s_d}$. On the other hand, $|N_{s_d}| \leq |N_H(b_i)| \leq d$, implying that $N_{s_d} = \{u_1, \ldots, u_d\}$. But this means that in the graph $G_{s_d}$, the vertex $\varphi(a_{i,k})$ is adjacent to every vertex of $N_{s_d}$, contrary to the above.  
\end{proof}

\noindent
The following technical lemma provides lower bounds on the probability of the events $\mathcal{A}_{i,k}$.

\begin{lemma} \label{lem:prob_switch_event}
	Fix any $1 \leq i \leq r$ and $1 \leq k \leq m$. 
	\begin{description}
		\item [(a)] If $\ell \geq 2d$, then
		\begin{equation*} \label{eq:prob_lower_bound}
		\mathbb{P}[\mathcal{A}_{i,k}] \geq \left( \frac{\ell^2}{12 d n^2} \right)^{d} \cdot e^{-\frac{(d+1)(\ell - 2d)}{n-d-1}}\; .
		\end{equation*}
		\item [(b)] If 
		$\ell \geq (\log (2d) + d + 3\sqrt{d})n$, then $\mathbb{P}[\mathcal{A}_{i,k}] \geq \frac{1}{4}$. 
	\end{description}
\end{lemma}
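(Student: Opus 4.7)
Throughout, set $v = \varphi(a_{i,k})$, $S = \varphi(N_H(a_{i,k}))$, and $q = |S| \leq d$. My plan for part (a) is to lower bound $\mathbb{P}[\mathcal{A}_{i,k}]$ by a sum over pairwise disjoint ``witness patterns''. For each ordered tuple of positions $t_1 < \cdots < t_q < s_1 < \cdots < s_d$ in $\{1, \dots, \ell\}$ (equivalently, each $(q+d)$-subset $T$ of $\{1,\ldots,\ell\}$, split into the smallest $q$ and the largest $d$) and each bijection $\sigma : [q] \to N_H(a_{i,k})$, let $E_{T,\sigma}$ be the event that $\varphi(\sigma(j))$ is offered at round $t_j$ for each $j \in [q]$, that $v$ is offered at each of the rounds $s_1, \dots, s_d$, and that no vertex of $\{v\} \cup S$ is offered at any other round. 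These events are pairwise disjoint (the trajectory uniquely determines $T$ as the set of rounds at which $\{v\} \cup S$ is offered, and $\sigma$ from the order of the $S$-appearances), and each clearly implies $\mathcal{A}_{i,k}$. Independence of the rounds yields $\mathbb{P}[E_{T,\sigma}] = n^{-(q+d)} (1 - (q+1)/n)^{\ell - q - d}$, so summing gives
$$\mathbb{P}[\mathcal{A}_{i,k}] \geq \binom{\ell}{q+d} q! \cdot n^{-(q+d)} \bigl(1 - \tfrac{q+1}{n}\bigr)^{\ell - q - d}.$$

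It then remains to bound the combinatorial and exponential factors. I would write
$$\binom{\ell}{q+d} q! = \frac{\ell(\ell-1)\cdots(\ell-q-d+1)}{(q+1)(q+2)\cdots(q+d)},$$
which has $q+d$ factors on top (each of order $\ell$, and at least $\ell/2$ after discarding small ones since $\ell \geq 2d$) and $d$ factors on the bottom (each at most $q+d \leq 2d$). Pairing up the numerator terms into $d$ pairs, using $q! \geq (q/e)^q$ to compensate when $q < d$, and noting that $4e \leq 12$, one obtains $\binom{\ell}{q+d} q! \cdot n^{-(q+d)} \geq (\ell^2/(12dn^2))^d$ uniformly in $q$. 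For the exponential factor, the standard inequality $\ln(1-x) \geq -x/(1-x)$ gives $(1 - (q+1)/n)^{\ell-q-d} \geq e^{-(d+1)(\ell-2d)/(n-d-1)}$ after substituting the worst case $q = d$ (which simultaneously maximises $(q+1)/n$ and minimises $\ell - q - d$).

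Part (b) admits a cleaner two-phase argument. Split $\{1, \ldots, \ell\}$ into an initial phase of length $\ell_1 = n\log(2d)$ and a final phase of length $\ell_2 = \ell - \ell_1 \geq (d + 3\sqrt{d})n$. A union bound over $u \in S$ gives
$$\mathbb{P}(\text{every vertex of } S \text{ is offered in phase } 1) \geq 1 - q(1-1/n)^{\ell_1} \geq 1 - q e^{-\log(2d)} = 1 - q/(2d) \geq 1/2.$$
In phase 2, the number of times $v$ is offered is distributed as $\Bin(\ell_2, 1/n)$ with mean $\mathbb{E} \geq d + 3\sqrt{d}$; applying Lemma \ref{lem:Chernoff} with deviation $\mathbb{E} - d \geq 3\sqrt{d}$ bounds the probability of fewer than $d$ offerings by $\exp(-9d/(2(d+3\sqrt{d}))) \leq e^{-9/8} < 1/2$. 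Since the two phases depend on disjoint sets of independent rounds, $\mathbb{P}[\mathcal{A}_{i,k}] \geq (1/2) \cdot (1/2) = 1/4$.

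The main obstacle lies in part (a): pushing through the combinatorial/analytic estimate so that exactly the constants $12$ and the exponent $\ell - 2d$ emerge, uniformly over $q \in \{0, 1, \ldots, d\}$. The case $q = d$ determines the tight constants (one must carefully verify that Stirling-type losses from $q!$ and the $(q+d)!$-style denominator are absorbed into the factor $12$), while the case $q = 0$ (where $a_{i,k}$ is isolated in $H$) is simplest and reduces directly to a Chernoff-type lower bound on $\mathbb{P}[\Bin(\ell, 1/n) \geq d]$ which is easily checked to beat the target.
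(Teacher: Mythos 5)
Your part (b) is essentially the paper's proof verbatim: the same two-phase split with lengths $n\log(2d)$ and at least $(d+3\sqrt d)n$, the same union bound for $\mathcal{E}_k$, the same Chernoff estimate for $\mathcal{F}_k$, and the same independence argument. That part is correct.

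Part (a) has a genuine gap. You decompose over disjoint witness patterns using exactly $q = |N_H(a_{i,k})|$ slots for the neighbours, which gives the (correct) lower bound $f(q) = \binom{\ell}{q+d}q!\,n^{-(q+d)}\bigl(1 - \tfrac{q+1}{n}\bigr)^{\ell - q - d}$, and you then propose to bound the combinatorial and exponential factors \emph{separately}, claiming $\binom{\ell}{q+d}q!\,n^{-(q+d)} \geq (\ell^2/(12dn^2))^d$ uniformly in $q$, and that $q = d$ is the worst case for the exponential factor. Neither of these intermediate claims holds. The combinatorial claim fails for small $q$ when $\ell$ is moderately large compared to $n$: at $q=0$, $d=1$, $n=2$, $\ell=30$ one has $\binom{\ell}{d} n^{-d} = 15 < 18.75 = \ell^2/(12dn^2)$. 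The exponential claim fails in the opposite regime: at $q=0$ and $\ell = 2d$ one needs $(1-1/n)^{d} \geq e^{-(d+1)(\ell-2d)/(n-d-1)} = 1$, which is false; indeed the exponent $(q+1)(\ell-q-d)/(n-q-1)$ is not monotone in $q$, and its maximiser over $q\in\{0,\dots,d\}$ depends on how $\ell$ compares with $dn$. The paper avoids this entirely by \emph{padding}: since $|N_H(a_{i,k})| \leq d$, it implicitly fixes a $d$-element superset $S' \supseteq \varphi(N_H(a_{i,k}))$ with $\varphi(a_{i,k}) \notin S'$ and uses the sub-event that all of $S'$ appears (one element in each of $d$ chosen rounds) before $\varphi(a_{i,k})$ appears $d$ times, with no other occurrences of $S' \cup \{\varphi(a_{i,k})\}$. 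This collapses to the $q = d$ case, for which your two separate estimates are exactly what is needed: $\binom{\ell}{2d} d!\, n^{-2d}\bigl(1 - \tfrac{d+1}{n}\bigr)^{\ell - 2d} \geq \bigl(\tfrac{\ell}{2dn}\bigr)^{2d}\bigl(\tfrac{d}{e}\bigr)^d e^{-(d+1)(\ell-2d)/(n-d-1)} = \bigl(\tfrac{\ell^2}{4edn^2}\bigr)^d e^{-(d+1)(\ell-2d)/(n-d-1)}$, and $4e \leq 12$. You flag this step as the main remaining obstacle, which is fair; but the fix is not a sharper inequality at general $q$, it is to pad to $q = d$ before estimating.
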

\begin{proof} 
	We start with Item (a). Since $|N_H(a_{i,k})| \leq d$, it follows that
\begin{align*}
	\mathbb{P}[\mathcal{A}_{i,k}] &\geq \binom{\ell}{2d} \cdot d! \cdot \left( \frac{1}{n} \right)^{2d} \cdot \left( 1 - \frac{d+1}{n} \right)^{\ell - 2d} 
	\geq \left( \frac{\ell}{2dn} \right)^{2d} \cdot \left( \frac{d}{e} \right)^d \cdot e^{-\frac{(d+1)(\ell - 2d)}{n-d-1}}  
	\geq \left( \frac{\ell^2}{12dn^2} \right)^{d} \cdot e^{-\frac{(d+1)(\ell - 2d)}{n-d-1}} \; ,
	\end{align*}
	where in the second inequality we used the estimates
	$1 - x \geq e^{-\frac{x}{1-x}}$ (which holds for every $0 < x < 1$) and 
	$d! \geq \left( \frac{d}{e} \right)^d$ (which holds for every $d \geq 1$).
	
	Next, we prove (b). Let $\mathcal{E}_k$ be the event that every vertex of $\varphi(N_H(a_{i,k}))$ was offered in the course of the first $\log (2d)n$ rounds, and let $\mathcal{F}_k$ be the event that $\varphi(a_{i,k})$ was offered at least $d$ times in the course of the last $(d + 3\sqrt{d})n$ rounds. Since 
	$\ell \geq (\log (2d) + d + 3\sqrt{d})n$
	by assumption, the events $\mathcal{E}_k$ and $\mathcal{F}_k$ are independent. Note that $\mathcal{E}_k \cap \mathcal{F}_k \subseteq \mathcal{A}_{i,k}$; that is, if both $\mathcal{E}_k$ and $\mathcal{F}_k$ occur, then so does $\mathcal{A}_{i,k}$. Therefore, $\mathbb{P}[\mathcal{A}_{i,k}] \geq \mathbb{P}[\mathcal{E}_k \cap \mathcal{F}_k] = \mathbb{P}[\mathcal{E}_k] \cdot \mathbb{P}[\mathcal{F}_k]$. The probability that $\mathcal{E}_k$ did not occur is at most
	$$
	d \cdot \left(1 - \frac{1}{n} \right)^{\log(2d) n} \leq d \cdot e^{- \log(2d)} = \frac{1}{2},
	$$
	and the probability that $\mathcal{F}_k$ did not occur equals the probability that $\Bin\left( \left(d + 3 \sqrt{d}\right)n, 1/n \right)$ is smaller than $d$, which is at most
	\begin{align*}
	\mathbb{P}\left[\Bin\left( \left(d + 3\sqrt{d} \right)n, 1/n \right) < d \right] 
	&\leq e^{- \frac{9d}{2(d + 3\sqrt{d})}}
	\leq e^{-\frac{9d}{8d}} \leq \frac{1}{2}\; .
	\end{align*}
	Here in the first inequality we used Lemma~\ref{lem:Chernoff} with $\lambda = 3\sqrt{d}$. 
	We thus conclude that $\mathbb{P}[\mathcal{A}_{i,k}] \geq 1/2 \cdot 1/2 = 1/4$ as claimed. 
\end{proof}



The following lemma forms the main result of Section \ref{subsec:main}, and plays a key role in the proofs of Theorems \ref{thm:main} and \ref{thm:trees}. Roughly speaking, this lemma states that if $\varphi : V(H) \rightarrow [n]$ is a bijection admitting a $\varphi$-good set that misses only a small fraction of $V(H)$, then by following Strategy~\ref{strategy} for a suitable (and not too large) number of rounds, \Builder can obtain a bijection $\varphi' : V(H) \rightarrow [n]$ which admits a $\varphi'$-good set that misses significantly fewer vertices. 
The actual statement is somewhat convoluted, since we simultaneously handle two regimes: one where the number of bad vertices is small but not very small, and another where this number is tiny (these also correspond to the two items in Lemma \ref{lem:prob_switch_event}). 
The proof of Lemma \ref{lem:iteration} utilizes Lemmas \ref{lem:algo_correctness}, \ref{lem:suff_condition_for_switch} and \ref{lem:prob_switch_event}, as well as some of the concentration inequalities from Section \ref{sec:preliminary}.  

\begin{lemma}\label{lem:iteration}
	Let $H$ be an $n$-vertex graph, let $\varphi : V(H) \rightarrow [n]$ be a bijection, let $V(H) = A \cup B$ be a partition, and consider a moment in the semi-random process at which $A$ is 
	$\varphi$-good with respect to \Builder's graph. Write $B = \{b_1, \ldots, b_r\}$. Let $m$ be a positive integer, and let $\{a_{i,k} \in A : 1 \leq i \leq r \textrm{ and } 1 \leq k \leq m\}$ be vertices which satisfy Items 1 and 2 of Setting~\ref{setting}. 
	Let $d \geq 100$ be such that $d_H(x) \leq d$ for every $x \in \bigcup_{i=1}^r \{b_i, a_{i,1}, \ldots, a_{i,m}\}$. 
	Let 
	$\ell_1 = (\log (2d) + d + 3\sqrt{d})n$,
	$\ell_2 = \lceil n \cdot m^{-1/4d} \rceil$, $q_1 = \frac{1}{4}$, and $q_2 = d^{-2d}m^{-1/2}$. Fix any $j \in \{1,2\}$, and suppose that $m q_j \geq 10^6 d$. Let 
	$$
	p = 
	\begin{cases}
	o(1), & \frac{m q_j}{64d} \geq \log n, \\
	e^{- \frac{\sqrt{n}}{O(d)}} \; , & \frac{m q_j}{64d} < \log n.
	\end{cases}
	$$
	Suppose that \Builder executes Strategy~\ref{strategy} for $\ell_j$ additional rounds. 
	Then, with probability at least $1-p$, after $\ell_j$ rounds of the process \Builder's graph $G$ will satisfy the following: there will be a bijection $\varphi' : V(H) \rightarrow V(G) = [n]$ and a partition $V(H) = A' \cup B'$ such that $A'$ is $(G, \varphi')$-good, $A \subseteq A'$, and 
	\begin{equation} \label{eq:main}
	|B'| \leq 
	\begin{cases}
	0, & \frac{m q_j}{64d} \geq \log n, \\
	5n \cdot e^{- \frac{m q_j}{256 d}} \; , & \frac{m q_j}{64d} < \log n.
	\end{cases}
	\end{equation}
\end{lemma}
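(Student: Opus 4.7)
The plan is to execute Strategy~\ref{strategy} for $\ell_j$ additional rounds and then to set $A' := A_{\ell_j}$, $\varphi' := \varphi_{\ell_j}$, and $B' := V(H) \setminus A'$. Since the sets $A_t$ in Strategy~\ref{strategy} are non-decreasing in $t$, one automatically has $A \subseteq A'$, and Lemma~\ref{lem:algo_correctness} already guarantees that $A'$ is good with respect to $G_{\ell_j}$ and $\varphi'$. The entire task thus reduces to controlling $|B'|$ with probability at least $1-p$.

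For this, I introduce the auxiliary statistic $Y^* := |\{i \in [r] : \mathcal{A}_{i,k} \text{ fails for every } 1 \le k \le m\}|$, which by Lemma~\ref{lem:suff_condition_for_switch} satisfies $|B'| \le Y^*$. Fix any $i \in [r]$. The events $\mathcal{A}_{i,1}, \dots, \mathcal{A}_{i,m}$ are determined by the restriction of the random offering sequence $(w_t)$ to the sets $E_{i,k} := \{\varphi(a_{i,k})\} \cup \varphi(N_H(a_{i,k}))$, which are \emph{pairwise disjoint} by Item~2 of Setting~\ref{setting}. Conditioning on which $E_{i,k}$ (if any) contains each $w_t$, the events $\mathcal{A}_{i,k}$ become conditionally independent across $k$, and by symmetry of i.i.d.\ sampling on $E_{i,k}$ their conditional probability depends only on the multinomial count $|R_{i,k}| := |\{t : w_t \in E_{i,k}\}|$ and is non-decreasing in it. Since multinomial counts are negatively associated, applying this to the non-increasing functions $1 - \mathbb{P}[\mathcal{A}_{i,k} \mid |R_{i,k}|]$ of disjoint coordinates yields, via Lemma~\ref{lem:prob_switch_event},
\[
\mathbb{P}\Bigl[\bigcap_{k=1}^{m} \overline{\mathcal{A}_{i,k}}\Bigr] \le \prod_{k=1}^{m} \mathbb{P}[\overline{\mathcal{A}_{i,k}}] \le (1 - q_j)^m \le e^{-m q_j},
\]
so $\mathbb{E}[Y^*] \le n e^{-m q_j}$.

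To finish, I split into the two cases of the lemma. When $m q_j/(64 d) \ge \log n$, the bound $\mathbb{E}[Y^*] \le n^{1-64d}$ is far below $1$, and Markov's inequality gives $|B'| \le Y^* = 0$ with probability $1 - o(1)$. Otherwise, because each $w_t$ lies in at most one of the sets $E_{i,k}$ (by the same disjointness), changing a single $w_t$ flips the indicator $\mathbf{1}[\bigcap_k \overline{\mathcal{A}_{i,k}}]$ for at most two values of $i$, so $Y^*$ has bounded differences with $c = 2$ over $T = \ell_j$ trials. Taking $\lambda = 4n e^{-m q_j/(256 d)}$ and using $\mathbb{E}[Y^*] \le n e^{-m q_j/(256 d)}$ crudely, Azuma's inequality (Lemma~\ref{lem:Azuma}) yields $\mathbb{P}[Y^* \ge 5n e^{-m q_j/(256 d)}] \le \exp(-\lambda^2/(8 \ell_j))$, and a short calculation using $\ell_1 = O(dn)$ (resp.\ $\ell_2 \le n$) together with the case hypothesis $m q_j < 64 d \log n$ (which ensures $e^{-m q_j/(128d)} > n^{-1/2}$) shows the right-hand side is at most $e^{-\sqrt{n}/O(d)}$. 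The most delicate step is the negative-association reduction, i.e.\ justifying the conditional factorization of the events across $k$ and the fact that their conditional probability depends only on $|R_{i,k}|$; both follow from the pairwise disjointness of the $E_{i,k}$ and the symmetry of i.i.d.\ sampling on each of them, but constitute the non-routine part of the argument.
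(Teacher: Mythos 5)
Your proposal is correct and reaches the same conclusion, but it replaces the paper's key concentration step with a genuinely different tool. The paper defines $X_i$, the number of indices $k$ for which $\mathcal{A}_{i,k}$ occurs, notes $\mathbb{E}[X_i]\ge mq_j$, and applies Talagrand's inequality (Lemma~\ref{lem:Talagrand} with $c=1$, $g=2d$) to get $\mathbb{P}[X_i=0]\le 4e^{-mq_j/(64d)}$; from there the Markov step in Case~1 and the Azuma step in Case~2 proceed just as in your write-up (the paper even uses $c=1$ rather than your $c=2$ in Azuma, exploiting the observation that a single swap of $w_t$ can only move one index into and one out of $\mathcal I$). You instead bound $\mathbb{P}[X_i=0]$ directly by a negative-association argument: since the $E_{i,k}=\varphi(\{a_{i,k}\}\cup N_H(a_{i,k}))$ are pairwise disjoint, the vector of hit-counts $(|R_{i,k}|)_k$ is multinomial, hence NA; conditioned on those counts the events $\mathcal{A}_{i,k}$ are independent with conditional probability a non-decreasing function of $|R_{i,k}|$ alone; applying NA to the non-increasing functions $1-f_k(|R_{i,k}|)$ gives $\mathbb{P}\bigl[\bigcap_k\overline{\mathcal{A}_{i,k}}\bigr]\le(1-q_j)^m\le e^{-mq_j}$. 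This is sharper by a factor of $64d$ in the exponent than what Talagrand yields, which is why you can afford the crude $c=2$ and a loose expectation bound downstream. The trade-off is that NA of multinomial counts, the conditional factorization across $k$, and the monotonicity of the conditional success probability each need to be argued carefully (you flag this yourself), whereas Talagrand is already imported in the paper's toolkit; also, you cite Lemma~\ref{lem:prob_switch_event} for $\mathbb{P}[\mathcal{A}_{i,k}]\ge q_j$ without performing the algebra that, for $j=2$, turns the lemma's raw bound into $q_2=d^{-2d}m^{-1/2}$ (the paper does this explicitly, including the check $\ell_2\ge 2d$). These are fillable gaps, not errors, and the overall argument is sound.
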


\begin{proof}
We will show that the assertion of the lemma holds with $\varphi' = \varphi_{\ell_j}$, $A' = A_{\ell_j}$, and $B' = B_{\ell_j}$. The fact that $A_{\ell_j}$ is $\varphi_{\ell_j}$-good follows immediately from Lemma~\ref{lem:algo_correctness}. It thus remains to prove that \eqref{eq:main} holds for $B' = B_{\ell_j}$ with probability at least $1 - p$. 

Fix arbitrary indices $1 \leq i \leq r$ and $1 \leq k \leq m$. For $j \in \{1, 2\}$, let us denote by $\mathbb{P}_j(\mathcal{A}_{i,k})$ the probability that $\mathcal{A}_{i,k}$ occurred in the course of the first $\ell_j$ rounds of the process. We claim that $\mathbb{P}_j[\mathcal{A}_{i,k}] \geq q_j$. 
The fact that  $\mathbb{P}_1[\mathcal{A}_{i,k}] \geq \frac{1}{4} = q_1$ follows immediately from Item (b) of Lemma~\ref{lem:prob_switch_event} and our choice of $\ell_1$. 
As for $j=2$, recall that $\ell_2 = \lceil n \cdot m^{-1/4d} \rceil$, which implies that $\ell_2 \geq 2d$ holds for $n$ which is sufficiently large with respect to $d$ (since we trivially have
$m \leq n$, and since $d \geq 100$, something like $n \geq (2d)^{1.01}$ would suffice). Therefore, Item (a) of Lemma~\ref{lem:prob_switch_event} yields 
$$
\mathbb{P}_2[\mathcal{A}_{i,k}] 
\geq \left( \frac{\ell_2^2}{12 d n^2} \right)^{d} \cdot 
e^{- \frac{(d+1)(\ell_2 - 2d)}{n-d-1}} 
\geq \left( \frac{1}{12d} \right)^{d} \cdot m^{-1/2} \cdot e^{-d-1} \geq
d^{-2d} m^{-1/2} = q_2\; \;,
$$
where the last inequality holds for sufficiently large $d$ (our assumption that $d \geq 100$ suffices). 

We have thus proved our assertion that $\mathbb{P}_j[\mathcal{A}_{i,k}] \geq q_j$ holds for every $j \in \{1,2\}$. For the remainder of the proof, we fix an arbitrary $j \in \{1,2\}$ and suppose (as in the statement of the lemma) that $mq_j \geq 10^6d$. For convenience, we put $\ell := \ell_j$ and $q := q_j$. For every $1 \leq i \leq r$, let $X_i$ be the random variable counting the number of indices $1 \leq k \leq m$ for which $\mathcal{A}_{i,k}$ occurred in the course of the first $\ell$ rounds. It follows by linearity of expectation that 
$\mathbb{E}[X_i] \geq m q$.

	Now, consider the sequence $(w_1, \ldots, w_{\ell})$ of random vertices offered to \Builder, and observe that changing any one coordinate in this sequence can change the value of $X_i$ by at most $1$ (here we use Item 2 in Setting \ref{setting}). Furthermore, for every $s$, if $X_i \geq s$, then there is a set of at most $2 d s$ coordinates in the sequence $(w_1, \ldots, w_{\ell})$ which certify that $X_i \geq s$ (indeed, each event $\mathcal{A}_{i,k}$ that occurred is certified by a set of at most $2d$ coordinates). It thus follows by Lemma~\ref{lem:Talagrand} with $c = 1$, $g = 2d$, and $\lambda = \frac{\mathbb{E}[X_i]}{2}$, that
	$$
	\mathbb{P}\left[X_i < \frac{\mathbb{E}[X_i]}{2} - 60 \sqrt{2d \mathbb{E}[X_i]} \right] 
	\leq 4 e^{- \frac{(\mathbb{E}[X_i]/2)^2}{16 d \mathbb{E}[X_i]}} 
	= 4 e^{- \frac{\mathbb{E}[X_i]}{64 d}} \leq 4 e^{- \frac{mq}{64d}} \; \; .
	$$
	Therefore, with probability at least 
	$1 - 4 e^{- \frac{mq}{64d}}$, it holds that
	\begin{align*}
	X_i &\geq \frac{\mathbb{E}[X_i]}{2} - 60 \sqrt{2d \mathbb{E}[X_i]}  
	= \sqrt{\mathbb{E}[X_i]} \cdot \left( \frac{\sqrt{\mathbb{E}[X_i]}}{2} - 60 \sqrt{2d} \right)  
	\geq \sqrt{\mathbb{E}[X_i]} \cdot \left( \frac{\sqrt{m q}}{2} - 60 \sqrt{2d} \right)  
	> 0,
	\end{align*}
	where the last inequality follows from our assumption that $m q \geq 10^6 d$. 
	
	Now let $\mathcal{I}$ be the set of all $1 \leq i \leq r$ such that $X_i = 0$. It follows by Lemma \ref{lem:suff_condition_for_switch} that if some $1 \leq i \leq r$ satisfies $X_i > 0$, then $b_i \notin B_{\ell}$. Hence, we have 
	$B_{\ell} \subseteq \{b_i : i \in \mathcal{I}\}$. So to complete the proof it is enough to show that the bounds in \eqref{eq:main} hold for the set $\mathcal{I}$.  
	We have seen that
	$
	\mathbb{P}[i \in \mathcal{I}] \leq 4 e^{- \frac{mq}{64d}}
	$ 
	holds for every \nolinebreak $1 \leq i \leq r$. \nolinebreak Hence,
	$$
	\mathbb{E}[|\mathcal{I}|] \leq r \cdot 4 e^{- \frac{mq}{64d}} 
	\; . 
	$$
	Suppose first that $\frac{m q}{64d} \geq \log n$, and note that we have 
	$r \leq \frac{n}{m} \leq \frac{n}{\log n}$. It follows that
	$$
	\mathbb{E}[|\mathcal{I}|] \leq r \cdot 4 e^{- \frac{mq}{64d}} \leq
	\frac{n}{\log n} \cdot \frac{4}{n} = o(1).
	$$
	So by Markov's inequality, we have $|\mathcal{I}| = 0$ w.h.p., as required. 
	
	Suppose now that $\frac{mq}{64d} < \log n$. Observe that changing any one coordinate in the sequence $(w_1, \ldots, w_{\ell})$ of random vertices, can change the value of $|\mathcal{I}|$ by at most $1$. Hence, it follows by Lemma~\ref{lem:Azuma} with $c = 1$ and 
	$\lambda = n \cdot e^{- \frac{m q}{256 d}} \geq n^{3/4}$, that
	\begin{align*}
	\mathbb{P}\left[ |\mathcal{I}| \geq \mathbb{E}[|\mathcal{I}|] + \lambda \right] 
	\leq e^{- \frac{\lambda^2}{2 \ell}} \leq 
	e^{- \frac{n^{3/2}}{2 \ell}} \leq 
	e^{- \frac{\sqrt{n}}{O(d)}} \; ,
	\end{align*}
	where the last inequality holds since $\ell_1, \ell_2 = O(dn)$. We conclude that with probability at least
	$1 - e^{- \frac{\sqrt{n}}{O(d)}}$, we have 
	$
	|\mathcal{I}| \leq \mathbb{E}[|\mathcal{I}|] + \lambda \leq 
	\left( 4r + n \right) \cdot e^{- \frac{m q}{256 d}} \leq
	5n \cdot e^{- \frac{m q}{256 d}} 
	,
	$
	as required.
\end{proof}

\subsection{Putting it All Together} \label{subsec:thm_proof}
\noindent

In this section we iterate Lemma \ref{lem:iteration} to prove Lemma \ref{lem:main}, from which Theorem \ref{thm:main} then easily follows. Lemma \ref{lem:main} roughly states that given a bijection $\varphi : V(H) \rightarrow [n]$ which admits a $\varphi$-good set covering almost all of $V(H)$, \Builder can use $\varphi$ as a basis for constructing a copy of $H$ in his graph, and he can achieve this objective w.h.p. fairly quickly.
In the proof we will need the following simple claim, which asserts that we can satisfy the conditions listed in Setting~\ref{setting} with a relatively large choice of $m$. 

\begin{claim} \label{claim:choice_of_a_{i,k}}
	Let $H$ be an $n$-vertex $D$-degenerate graph of maximum degree $\Delta$. Let $A \cup B$ be a partition of $V(H)$ and suppose that $|A| \geq 4\Delta|B|$. Set $r := |B|$ and
	$$m := \left\lceil \frac{|A|}{8\Delta^2|B|} \right\rceil.$$ Then, there exist vertices
	$\left( a_{i,k} : 1 \leq i \leq r \textrm{ and } 1 \leq k \leq m \right)$ such that $a_{i,k} \in A$ and $d_H(a_{i,k}) \leq 2D$ for every $(i,k) \in [r] \times [m]$, and such that Conditions 1 and 2 of Setting~\ref{setting} are satisfied. 
\end{claim}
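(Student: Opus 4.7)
The plan is to produce the vertices $a_{i,k}$ greedily from a pool of eligible candidates in $A$. Define
\[
A^{\star} \;=\; \{\, v \in A : \; N_H(v) \cap B = \emptyset \ \text{and}\ d_H(v) \leq 2D \,\}.
\]
First, I would lower bound $|A^{\star}|$ by a constant fraction of $|A|$. The condition $N_H(v) \cap B \neq \emptyset$ excludes at most $|N_H(B) \cap A| \leq \Delta |B| \leq |A|/4$ vertices, using the hypothesis $|A| \geq 4\Delta|B|$. For the degree condition, I would invoke the $D$-degeneracy of $H$: fix a vertex ordering $v_1,\dots,v_n$ in which every vertex has at most $D$ earlier neighbors, so that $\sum_i d_H^{+}(v_i) = |E(H)| \leq Dn$; a Markov-type count then produces many vertices of forward-degree at most $D$ and hence of total degree at most $2D$. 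Combined with the fact that $|A| \geq 4\Delta|B|$ forces $|A|$ to be $(1-o(1))n$, so that the high-degree vertices cannot occupy most of $A$, this gives $|A^{\star}| = \Omega(|A|)$.

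Next, I would select the $a_{i,k}$'s greedily from $A^{\star}$. At each step, pick any remaining $v \in A^{\star}$, append it to the output, and remove from $A^{\star}$ the closed $2$-neighborhood of $v$ in $H$, i.e., $v$ together with every vertex at $H$-distance at most $2$. This removal guarantees that the closed neighborhoods of the selected vertices are pairwise disjoint, as required by Condition 2 of Setting~\ref{setting}. Because $d_H(v) \leq 2D$ for every chosen $v$ and the maximum degree of $H$ is $\Delta$, the closed $2$-neighborhood of $v$ has size at most $1 + 2D\Delta$. Hence the greedy process produces at least $|A^{\star}|/(1 + 2D\Delta)$ vertices, which using $D \leq \Delta$ is of order $|A|/\Delta^{2}$.

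Finally, I would verify that this exceeds $rm = |B| \cdot \lceil |A|/(8\Delta^{2}|B|) \rceil$ in both relevant regimes. If $|A| \geq 8\Delta^{2}|B|$, then $rm \lesssim |A|/\Delta^{2}$; if $|A| < 8\Delta^{2}|B|$, then $m=1$ and $rm = |B| \leq |A|/(4\Delta)$. Using $D \leq \Delta$ and the constant factors in the bound on $|A^{\star}|$, the comparison goes through in both cases. The first $rm$ vertices of the greedy output are then labelled arbitrarily as the $a_{i,k}$'s: membership in $A^{\star}$ ensures Condition 1 of Setting~\ref{setting} and the bound $d_H(a_{i,k}) \leq 2D$, while the greedy removal ensures Condition 2.

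The delicate step is the lower bound on $|\{v \in V(H) : d_H(v) \leq 2D\}|$ from $D$-degeneracy alone, since naive averaging yields only $\Omega(n/D)$, which for large $D$ is not by itself sufficient. The saving feature is that we care about $|A|$, not $|V(H)|$: the hypothesis $|A| \geq 4\Delta|B|$ makes $|A|$ essentially all of $V(H)$, and the count of eligible vertices need only survive the removal of $N_H(B)$ and of the high-degree vertices, both of which are controlled by the degeneracy-ordering argument combined with the slack in $|A| \geq 4\Delta|B|$.
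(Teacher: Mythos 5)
Your approach (define $A^\star$, lower-bound its size, then greedily select well-separated vertices from it) is essentially the same as the paper's, and both the greedy step and the bound $|N_H(B) \cap A| \leq \Delta|B| \leq |A|/4$ are fine. The gap is in the degree-count step you yourself flag as delicate. In a $D$-degeneracy ordering one has $\sum_i d_H^{+}(v_i) = |E(H)| \leq Dn$, so Markov at threshold $D+1$ gives at least $n/(D+1)$ vertices with forward degree $\leq D$ and hence total degree $\leq 2D$; but this is $\Theta(n/D)$, not $\Omega(n)$, and $D$ can be as large as $\Delta$. Plugging $|A^\star| = \Theta(n/D)$ into the greedy bound $|A^\star|/(1+2D\Delta)$ and comparing with the required $rm \approx |A|/(8\Delta^2)$ shows the factors of $D$ do not cancel in your favor. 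Your proposed patch --- using $|A| \geq 4\Delta|B|$ to argue $|A| = (1-o(1))n$ --- is correct but orthogonal: it controls the removal of vertices adjacent to $B$, and says nothing about how the $\Theta(n/D)$ low-degree vertices are distributed inside $A$.

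For what it is worth, the paper's corresponding step (``fewer than $|A|/2$ vertices with $|N_H(a) \cap A| > 2D$'') also obtains its claimed constant cleanly only at threshold $4D$: from $\sum_{a \in A} |N_H(a)\cap A| = 2|E(H[A])| \leq 2D|A|$ one gets that fewer than $|A|/2$ vertices satisfy $|N_H(a)\cap A| > 4D$, whereas at threshold $2D$ the same averaging gives only fewer than $|A|$. The natural and harmless repair, for your argument as well as for the paper's, is to raise the degree threshold from $2D$ to $4D$ (so $d_H(a_{i,k}) \leq 4D$), which propagates as $d = \min\{4D,\Delta\}$ in Lemma~\ref{lem:main} and costs only a constant factor in all the round counts. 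With that change both your averaging and the greedy count go through; without it, the step ``a Markov-type count then produces many vertices of total degree at most $2D$'' is not substantiated.
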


\begin{proof}
Let $A' := \{a \in A: |N_H(a) \cap A| \leq 2D \text{ and } N_H(a) \cap B = \emptyset\}$. Note that every $a \in A'$ satisfies $d_H(a) = |N_H(a) \cap A| \leq 2D$. Since $H$ is $D$-degenerate, there are less than $|A|/2$ vertices $a \in A$ which satisfy $|N_H(a) \cap A| > 2D$. As $\Delta(H) = \Delta$, there are at most $\Delta|B| \leq |A|/4$ vertices $a \in A$ which have a neighbour in $B$. Altogether, we get $|A'| \geq |A|/4$. 
Using again the assumption $\Delta(H) = \Delta$, we infer that there exists \nolinebreak an \nolinebreak integer 
$$
M \geq \frac{|A'|}{\Delta^2 + 1} \geq \frac{|A|}{8\Delta^2}
$$
and vertices $a_1, \ldots, a_M \in A'$ such that $\text{dist}_H(a_i, a_j) \geq 3$ for every $1 \leq i < j \leq M$. Recalling our choice of $m$, we index (a subset of) the vertices $a_1, \ldots, a_M$ by pairs $(i,k) \in [r] \times [m]$. 
For every $(i,k) \in [r] \times [m]$, let $a_{i,k}$ be the vertex in 
$\{a_1, \ldots, a_M\}$ which is indexed by the pair $(i,k)$.
Now Condition 1 of Setting~\ref{setting} is satisfied due to our choice of $A'$,
and Condition 2 of Setting~\ref{setting} is satisfied because $\text{dist}_H(a_{i,k}, a_{i',k'}) \geq 3$ for every choice of distinct pairs $(i,k), (i',k') \in [r] \times [m]$.	
\end{proof}

\begin{lemma}\label{lem:main}
	Let $H$ be an $n$-vertex $D$-degenerate graph of maximum degree $\Delta$.
	Set $d = \min\{2D,\Delta\}$. Suppose that at some point in the semi-random process, there is a bijection $\varphi : V(H) \rightarrow [n]$ and a partition 
	$V(H) = A \cup B$ such that $A$ is 
	$\varphi$-good with respect to \Builder's graph; such that $|B| \leq 10^{-8}\Delta^{-5}n$; and such that $d_H(b) \leq d$ for every $b \in B$. Then \Builder has a strategy guaranteeing that w.h.p., after $(d + o(d))n$ additional rounds his graph will contain a copy of $H$. 
\end{lemma}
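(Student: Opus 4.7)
The idea is to iteratively apply Lemma \ref{lem:iteration} in order to drive $|B|$ to $0$. Starting from the given $\varphi, A, B$, at each step $t$ we apply Claim \ref{claim:choice_of_a_{i,k}} to the current partition $A^{(t)} \cup B^{(t)}$ to obtain vertices $\{a^{(t)}_{i,k}\}$ satisfying Setting \ref{setting}: the hypothesis $|A^{(t)}| \geq 4\Delta|B^{(t)}|$ of the claim follows from the initial bound $|B^{(0)}| \leq 10^{-8}\Delta^{-5}n$ and is preserved as $|B|$ shrinks, while the degree constraint $d_H(x) \leq d$ continues to hold since $B^{(t)} \subseteq B^{(0)}$ and the $a^{(t)}_{i,k}$'s have $H$-degree at most $2D \leq d$ by Claim \ref{claim:choice_of_a_{i,k}}. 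We then invoke Lemma \ref{lem:iteration} with $m_t = \lceil |A^{(t)}|/(8\Delta^2|B^{(t)}|)\rceil \geq \Theta(n/(\Delta^2|B^{(t)}|))$ and a suitable regime $j \in \{1,2\}$, obtaining the updated $\varphi^{(t+1)}, A^{(t+1)}, B^{(t+1)}$. If $d < 100$, we simply replace $d$ by $100$ in the invocation of Lemma \ref{lem:iteration}, which changes the round count only by a constant factor, absorbed into the $(d + o(d))n$ bound when $d$ is bounded.

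The strategy is to first perform a single iteration of the ``expensive'' regime $j = 1$, costing $\ell_1 = (d + o(d))n$ rounds, followed by $O(\log^* n)$ iterations of the ``cheap'' regime $j = 2$. The initial $j = 1$ step is possible because $m_0 \geq 6 \cdot 10^6 \Delta^3 \geq 4 \cdot 10^6 d$ easily satisfies $m_0 q_1 \geq 10^6 d$; it yields $|B^{(1)}| \leq 5n\exp(-m_0/(1024d)) \leq 5n\exp(-\Theta(\Delta^2))$, which in turn makes $m_1 \geq \exp(\Theta(\Delta^2))/\Theta(\Delta^2)$. A log comparison (using $d \leq \Delta$) shows that this easily exceeds the stronger hypothesis $m_1 \geq 10^{12} d^{4d+2}$ required for the $j = 2$ regime. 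Each $j = 2$ iteration then reduces $|B|$ doubly exponentially: from $|B^{(t)}|$ to at most $5n\exp(-m_t^{1/2}/(256 d^{2d+1}))$, or all the way to $0$ once $m_t^{1/2}/(64 d^{2d+1}) \geq \log n$. Thus a (very) small number of such iterations suffices to drive $|B|$ to $0$ w.h.p.

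Accounting for rounds: since $m_t$ grows very fast, $\ell_2^{(t)} = \lceil n\, m_t^{-1/4d}\rceil$ shrinks very rapidly. Already $\ell_2^{(1)} \leq n\exp(-\Theta(\Delta))$, and subsequent $\ell_2^{(t)}$'s are even smaller, so $\sum_{t \geq 1} \ell_2^{(t)} = o(d)\,n$. Combined with $\ell_1 = (d + o(d))n$, the total is $(d + o(d))n$, as required. A union bound over the failure probabilities from each invocation of Lemma \ref{lem:iteration} (each being $o(1)$ or exponentially small in $n$) completes the probabilistic analysis. The main technical obstacle is to carefully verify the hypothesis $m_t q_j \geq 10^6 d$ of Lemma \ref{lem:iteration} at every step, in particular checking that a single $j = 1$ iteration shrinks $|B|$ enough to make the more demanding $j = 2$ hypothesis hold thereafter, which the sketch above indicates.
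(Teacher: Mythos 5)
Your proposal is correct and follows essentially the same approach as the paper: a single $j=1$ iteration of Lemma \ref{lem:iteration} followed by a sequence of $j=2$ iterations, with Claim \ref{claim:choice_of_a_{i,k}} invoked at each step to produce the $a_{i,k}$'s, and with the key verification being that the first iteration drops $|B|$ to roughly $ne^{-\Theta(\Delta^2)}$, after which $m_t q_2 \geq 10^6 d$ holds and the round counts $\ell_2^{(t)}$ form a geometrically (in fact super-geometrically) decaying sum of total $o(d)n$. The only difference is that the paper tracks $\beta_t = |B_t|/n \leq e^{-\Delta^2(t+1)}$ explicitly and bounds the number of iterations by $\log\log n$, whereas you assert a slightly different (also valid) iteration count; this is immaterial.
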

\begin{proof}
	We may and will assume that $d$ is large enough, say $d \geq 100$. 
	\Builder's strategy consists of two phases, which correspond to the two cases (i.e. $j = 1$ and $j = 2$) in Lemma \ref{lem:iteration}. 
	\begin{description}
%
%
		\item [Phase 1:] 
		Let 
		$$
		m_0 := \left\lceil \frac{n}{8\Delta^2|B|} \right\rceil \; .
		$$
		Find vertices $(a_{i,k} : 1 \leq i \leq |B| \textrm{ and } 1 \leq k \leq m_0)$ such that $a_{i,k} \in A$ and
		$d_H(a_{i,k}) \leq d$ for every $(i,k) \in [r] \times [m_0]$, and such that
		Conditions 1 and 2 of Setting~\ref{setting} are satisfied (with $m = m_0$). Apply the strategy whose existence is guaranteed by Lemma~\ref{lem:iteration} with parameter $j=1$ and with $m = m_0$. 
	 Lemma~\ref{lem:iteration} (with $j=1$) ensures that after 
		$\ell_1 = (\log (2d) + d + 3\sqrt{d})n = (d + o(d))n$ rounds, there will be a bijection $\varphi_0 : V(H) \rightarrow [n]$ and a partition $V(H) = A_0 \cup B_0$ such that $A_0$ is $\varphi_0$-good with respect to \Builder's graph, 
		$A \subseteq A_0$ (and hence $B_0 \subseteq B$), and 
		\begin{equation}\label{eq:zeroth_iteration}
		|B_0| \leq 
		\begin{cases}
		0, & \frac{m_0}{256d} \geq \log n, \\
		5 n \cdot e^{- \frac{m_0}{1024d}} \; , & \frac{m_0}{256d} < \log n.
		\end{cases}
		\end{equation}
		If $B_0 = \emptyset$ then $A_0 = V(H)$ is $\varphi_0$-good, implying that \Builder has successfully embedded $H$ into his graph, and so \Builder is done. Otherwise, proceed to Phase 2. 
		
		\item [Phase 2:] 
		Define a sequence of bijections $\varphi_1, \varphi_2, \ldots$ from $V(H)$ to $[n]$, and a sequence of partitions $A_1 \cup B_1, A_2 \cup B_2, \ldots$ of $V(H)$, by performing the following steps for every integer $t \geq 1$ for which $B_{t-1} \neq \emptyset$. 
		\begin{description}
		\item [(a)] 
		Find vertices $\big( a_{i,k} : 1 \leq i \leq |B_{t-1}| \textrm{ and } 1 \leq k \leq m_{t} := \lceil \frac{n}{8\Delta^2|B_{t-1}|} \rceil \big)$ such that $a_{i,k} \in A_{t-1}$ and $d_H(a_{i,k}) \leq d$ for every $(i,k) \in [r] \times [m_{t}]$, and such that
		Conditions 1 and 2 of Setting~\ref{setting} are satisfied for the partition $A_{t-1} \cup B_{t-1}$ with $m = m_{t}$.
		
		\item [(b)] Invoke the strategy whose existence is guaranteed by Lemma~\ref{lem:iteration} with $j=2$, with $m = m_t$, and with input $\varphi_{t-1}$ and $A_{t-1} \cup B_{t-1}$. Using this strategy, \Builder obtains a bijection $\varphi_t : V(H) \rightarrow [n]$ and a partition $A_t \cup B_t$ of $V(H)$ such that $A_t$ is $\varphi_t$-good with respect to \Builder's graph, $A_{t-1} \subseteq A_t$ (and hence $B_t \subseteq B_{t-1}$), and 
		\begin{equation}\label{eq:iteration}
		|B_t| \leq 
		\begin{cases}
		0, & m'_{t} \geq \log n, \\
		5n \cdot e^{- m'_{t}/4} \; , 
		& 
		m'_{t} < \log n,
		\end{cases}	
		\end{equation}
		where 
		$$
		m'_{t} := \frac{m_{t} \cdot q_2}{64d} = 
		\frac{\sqrt{m_{t}}}{64d^{2d+1}} \; ,
		$$ 
		and $q_2$ is as in Lemma~\ref{lem:iteration}. (Here and later on we slightly abuse notation by hiding the fact that $q_2 = d^{-2d}/\sqrt{m_{t}}$ depends on $t$.)
		\end{description}
		\end{description}
		
		Having described \Builder's strategy, we now turn to prove that w.h.p. \Builder can follow it. 
		First, note that by Claim~\ref{claim:choice_of_a_{i,k}}, there exist vertices $(a_{i,k} : 1 \leq i \leq |B| \textrm{ and } 1 \leq k \leq m_0)$ such that $a_{i,k} \in A$ and $d_H(a_{i,k}) \leq d$ for every $(i,k) \in [r] \times [m_0]$, and 
		such that Conditions 1 and 2 of Setting~\ref{setting} hold. 
		Moreover, the conditions required for the application of Lemma~\ref{lem:iteration} with $j=1$ are satisfied as 
		$m_0 q_1 = \frac{m_0}{4} \geq \frac{n}{32 \Delta^2 |B|} > 10^6 \Delta \geq 10^6 d$, where the second inequality follows from our assumption that $|B| \leq 10^{-8} \Delta^{-5} n$. This shows that \Builder can follow Phase 1 of his strategy. It remains to show that w.h.p. \Builder can follow Phase 2 of his strategy. Similarly to Phase 1, the existence of the desired vertices 
		$(a_{i,k} \in A_{t-1} : 1 \leq i \leq |B_{t-1}| \textrm{ and } 1 \leq k \leq m_{t})$ for every given integer $t \geq 1$ follows from Claim~\ref{claim:choice_of_a_{i,k}} with input $A_{t-1} \cup B_{t-1}$.
		It remains to prove that the conditions of Lemma~\ref{lem:iteration} are met whenever \Builder wishes to apply it (with $j=2$). The fact that $A_{t-1}$ is $\varphi_{t-1}$-good for every positive integer $t$ is guaranteed by the previous applications of Lemma \ref{lem:iteration}. The fact that $d_H(b) \leq d$ for every $b \in B_{t-1}$ follows from our assumption that the same holds for every $b \in B$, and the fact that $B_0 \subseteq B$ and $B_i \subseteq B_{i-1}$ for every $i \geq 1$. 
		We now show that $\sqrt{m_{t}} \cdot d^{-2d} = m_{t} \cdot q_2 \geq 10^6 d$ holds for every integer $t \geq 1$ for which $B_{t-1} \neq \emptyset$. To this end, first note that
		\begin{equation}\label{eq:beta_0}
		|B_0| \leq 
		5n \cdot e^{- \frac{m_0}{1024 d}} \leq
		5n \cdot \exp \left( - \frac{n}{2^{13} \Delta^3 |B|} \right) 
		e^{} \leq 
		n \cdot e^{- \Delta^2}, \;
		\end{equation}
		where the first inequality follows from \eqref{eq:zeroth_iteration}, and the third from our assumption that
		$|B| \leq 10^{-8}\Delta^{-5}n$. Since $B_i \subseteq B_{i-1}$ for every $i \geq 1$, we have $|B_i| \leq n \cdot e^{- \Delta^2}$ for each $i \geq 0$. Now we obtain
		\begin{equation}\label{eq:vanishing_set_of_bad_vertices}
		m_{t} \cdot q_2 = 
		\sqrt{m_{t}} \cdot d^{-2d} \geq 
		\frac{\sqrt{n}}{3\Delta^{2\Delta+1}\sqrt{|B_{t-1}}|} 
		\geq 
		\frac{e^{\Delta^2/2}}{3\Delta^{2\Delta+1}} \geq 10^6 \Delta \geq 10^6 d,
		\end{equation}
		where the penultimate inequality holds for sufficiently large $\Delta$ (say, $\Delta \geq 100$). This shows that we can indeed apply Lemma~\ref{lem:iteration} with $j=2$ and with input $A_{t-1} \cup B_{t-1}$ for every integer $t \geq 1$ for which $B_{t-1} \neq \emptyset$. We conclude that \Builder can follow Phase 2 of his strategy.
		
		Before moving on to prove the correctness of \Builder's strategy, we first prove the following claim. 
\begin{claim} \label{cl::relative_size_of_set_of_bad_vertices}
Let $\zeta = e^{-\Delta^2}$ and, for every non-negative integer $t$, let $\beta_t = |B_t|/n$. Suppose either that $t = 0$ or that $t \geq 1$ and $m'_{t} < \log n$. Then
$\beta_t \leq \zeta^{t+1}$. 
\end{claim}
		
\begin{proof}
		Our proof proceeds by induction on $t$. The base case $t = 0$ follows from \eqref{eq:beta_0}.
		Let then $t \geq 1$ and suppose that $m'_{t} < \log n$. Observe that the sequence $m'_s$ is monotone non-decreasing in $s$ (this follows from the fact that $B_i \subseteq B_{i-1}$ for every $i \geq 1$). So either $t - 1 = 0$, or $m'_{t-1} < \log n$. In either case we can apply the induction hypothesis to get $\beta_{t-1} \leq \zeta^t$. Note that  
		\begin{equation}\label{eq:m'}
		m'_{t} = 
		\frac{\sqrt{m_{t}}}{64 d^{2d+1}} \geq 
		\frac{\sqrt{n}}{200 \Delta^{2\Delta+2} \sqrt{|B_{t-1}|}}
		\; \; .
		\end{equation}
		Now we get
		\begin{align*}
		\beta_t &\leq 
		5 \cdot e^{- m'_{t}/4} \leq 
		5 \cdot \exp\left( {- \frac{1}{800 \Delta^{2\Delta+2} \sqrt{\beta_{t-1}}} } \right) \leq 
		5 \cdot \exp\left( {- \frac{1}{800 \Delta^{2\Delta+2}\zeta^{t/2}} } \right)
		\\ &= 
		5 \cdot \exp\left( - \frac{e^{\Delta^2t/2}}{800 \Delta^{2\Delta+2}} \right) \leq 
		e^{- \Delta^2 (t+1)} = \zeta^{t+1} \; ,
		\end{align*}
		where the first inequality holds by \eqref{eq:iteration}, the second inequality holds by~\eqref{eq:m'}, the third inequality holds by the induction hypothesis for $t-1$, and the last inequality holds for every $t \geq 1$, provided that $\Delta$ is larger than some suitable absolute constant (again, $\Delta \geq 100$ suffices). This proves the claim.
\end{proof}		

Returning to the proof of the lemma, we now prove the correctness of \Builder's strategy. For the time being, we will assume that all applications of 
Lemma~\ref{lem:iteration} throughout \Builder's strategy are successful; later we will show that w.h.p. this is indeed the case. It follows from 
\eqref{eq:m'} and from Claim \ref{cl::relative_size_of_set_of_bad_vertices} that 
$m'_t \geq e^{\Delta^2t/2} \cdot \frac{1}{200\Delta^{2\Delta+2}}$ holds for every $t \geq 1$. Hence,  $m'_{t} \geq \log n$ must hold for some $t \leq \log \log n$ (and in fact much earlier, but we will not need this). Now, if $m'_{t} \geq \log n$ then by \eqref{eq:iteration} we have $B_t = \emptyset$, which in turn implies that \Builder has successfully embedded $H$ into the graph he is constructing.  


	Next, we estimate the probability that \Builder's strategy fails. Recall that  Lemma~\ref{lem:iteration} is only applied once with parameter $j=1$, and that this application is w.h.p.~successful. Let us now consider the applications of Lemma~\ref{lem:iteration} with $j=2$ (in Phase 2). As previously noted, there is at most one such application with $m'_{t} \geq \log n$, and at most $\log \log n$ such applications with $m'_{t} < \log n$. The failure probability of the former application is $o(1)$, and the failure probability of each of the latter applications is at most 
	$e^{- \frac{\sqrt{n}}{O(d)}}$. We thus conclude that w.h.p. all of the above applications of Lemma~\ref{lem:iteration} are successful, as required. This completes the proof of correctness of \Builder's strategy. 
	
	It remains to estimate the overall number of rounds required for implementing \Builder's strategy. Recall that the sole application of Lemma~\ref{lem:iteration} with $j=1$ requires $(d + o(d))n$ rounds. It thus remains to bound from above the number of rounds required for Phase 2 of \Builder's strategy. To this end, let $t^*$ denote the smallest integer $t$ satisfying $m'_{t} \geq \log n$, and note that $t^* \leq \log\log n$. Then in Phase 2, Lemma~\ref{lem:iteration} was invoked at most $t^*$ times. Moreover, for each $1 \leq t \leq t^*$, invoking Lemma~\ref{lem:iteration} with input $A_{t-1} \cup B_{t-1}$ (and with $j=2$) required at most
	$$
	\big\lceil n \cdot m_{t}^{-\frac{1}{4d}} \big\rceil \leq n \cdot m_{t}^{-\frac{1}{4\Delta}} + 1 \leq
	n \cdot \left( \frac{n}{8\Delta^2|B_{t-1}|} \right)^{-\frac{1}{4\Delta}} + 1 \leq 
	O(n) \cdot \left( \frac{1}{\beta_{t-1}} \right)^{-\frac{1}{4\Delta}} + 1 \leq 
	O(n) \cdot e^{- \Delta t/4} + 1 
	$$
	rounds, where in the last inequality we used Claim \ref{cl::relative_size_of_set_of_bad_vertices}.
	Therefore, the overall number of rounds required for the (at most) $t^*$ applications of Lemma~\ref{lem:iteration} in Phase 2 is no more than 
	\begin{align*}
	\sum_{t=1}^{t^*} {\left( O(n) \cdot e^{- \Delta t/4} + 1 \right)} \leq 
	O(n) \cdot \sum_{t=1}^{\infty} {e^{- \Delta t/4}} + \log \log n = O(n).
	\end{align*}
	We conclude that the overall number of rounds required for implementing \Builder's strategy is at most \linebreak$(d + o(d)) n$, thus completing the proof of Lemma \ref{lem:main}. 
\end{proof}

\noindent
Equipped with Lemma \ref{lem:main}, we can finally prove Theorem \ref{thm:main}.
\begin{proof}[Proof of Theorem \ref{thm:main}]
	Let $n$, $\Delta$, and $H$ be as in the statement of the theorem. Due to Proposition \ref{prop:Delta >> logn}, we only need to handle the case $\Delta = O(\log n)$. Set $\alpha = 10^{-8}\Delta^{-5}$. 
	\Builder's strategy for embedding $H$ is as follows. Fix an arbitrary bijection $\varphi : V(H) \rightarrow [n]$. In the first
	$
	(\Delta/2 + o(\Delta))n
	$
	rounds, \Builder invokes the strategy whose existence is guaranteed by Lemma~\ref{lem:initial}, and thus obtains w.h.p. a $\varphi$-good set $A \subseteq V(H)$ of size at least $(1 - \alpha)n$. Setting $B := V(H) \setminus A$, observe that the requirements of Lemma \ref{lem:main} are satisfied for $D := \Delta$. \Builder now applies the strategy given by Lemma \ref{lem:main} for an additional $(\Delta + o(\Delta))n$ rounds, and by doing so successfully constructs a copy of $H$ in his graph w.h.p. The overall number of rounds is then $(3\Delta/2 + o(\Delta))n$, as required. 
\end{proof}

\section{Constructing Spanning Forests}\label{sec:trees}
In this section we prove Theorem~\ref{thm:trees} and Proposition~\ref{prop::offlineTree}.  
We start with the following simple lemma, whose proof demonstrates a strategy for greedily embedding an almost-spanning forest.
\begin{lemma} \label{lem:greedy_embedding}
	Let $n$ be a positive integer and let $\alpha \in (0,1)$ be such that $n \gg \alpha^{-2} \log(1/\alpha)$. Let $T'$ be a forest on $(1 - \alpha) n$ vertices and let $\ell = \log(2/\alpha) \cdot n$. Then, in the semi-random process on $n$ vertices, \Builder has a strategy which w.h.p. allows him to construct a copy of $T'$ within $\ell$ rounds.
\end{lemma}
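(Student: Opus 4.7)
The plan is to set up a straightforward greedy embedding that succeeds as long as enough distinct vertices have been offered. Since $T'$ is a forest, I would first order its vertices as $v_1,\ldots,v_k$ with $k=(1-\alpha)n$ via a BFS/DFS within each component, so that every $v_i$ with $i\ge 2$ has at most one earlier neighbour $p(v_i)\in\{v_1,\ldots,v_{i-1}\}$. \Builder maintains a pointer $i$ (initially $i=1$) and a partial injective embedding $\varphi$. On being offered a random vertex $w$, if $w\notin\varphi(\{v_1,\ldots,v_{i-1}\})$ he sets $\varphi(v_i):=w$; if additionally $v_i$ has a parent, he claims the edge $\{w,\varphi(p(v_i))\}$, otherwise he claims an arbitrary edge incident to $w$; finally he increments $i$. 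If $w$ was already used, he claims an arbitrary edge and does nothing else.

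By construction, as soon as this process reaches $i=k+1$, \Builder's graph contains a copy of $T'$. Therefore the whole argument reduces to showing that after $\ell=\log(2/\alpha)\cdot n$ rounds, the number of \emph{distinct} vertices offered is w.h.p.\ at least $(1-\alpha)n$.

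For the concentration step, let $Z$ be the number of vertices of $[n]$ never offered during the $\ell$ rounds. For each $v\in[n]$ we have
$$
\Pr[v\text{ not offered}]=\left(1-\tfrac{1}{n}\right)^{\ell}\le e^{-\ell/n}=e^{-\log(2/\alpha)}=\alpha/2,
$$
so $\mathbb{E}[Z]\le\alpha n/2$. Changing the outcome of any single round changes $Z$ by at most $1$, so Azuma's inequality (Lemma~\ref{lem:Azuma}) with $c=1$, $T=\ell$ and $\lambda=\alpha n/2$ gives
$$
\Pr[Z\ge\alpha n]\le\Pr[Z\ge\mathbb{E}[Z]+\alpha n/2]\le\exp\!\left(-\frac{\alpha^2 n^2/4}{2\ell}\right)=\exp\!\left(-\frac{\alpha^2 n}{8\log(2/\alpha)}\right),
$$
which is $o(1)$ by the hypothesis $n\gg\alpha^{-2}\log(1/\alpha)$. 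Hence w.h.p.\ at least $(1-\alpha)n$ distinct vertices are offered in $\ell$ rounds and \Builder's strategy terminates successfully within that many rounds.

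I do not foresee a real obstacle here: the only mildly subtle point is matching up the deviation $\alpha n/2$ with the quantitative assumption on $n$, which is precisely what the hypothesis $n\gg\alpha^{-2}\log(1/\alpha)$ is tailored for. The choice $\ell=\log(2/\alpha)n$ (as opposed to the naive $\log(1/\alpha)n$ coming from the mean) is what provides enough slack for Azuma to beat the $\alpha n$ threshold.
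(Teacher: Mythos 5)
Your proof is correct and follows essentially the same approach as the paper: a greedy embedding in a tree/forest order (one new vertex per round in which a fresh vertex is offered), reducing the problem to collecting $(1-\alpha)n$ distinct vertices, and then the identical Azuma-based concentration bound with $\mathbb{E}[Z]\le\alpha n/2$ and $\lambda=\alpha n/2$. The only cosmetic difference is that the paper first reduces to the case $T'$ is a tree, whereas you handle forest components directly by letting root vertices claim an arbitrary edge.
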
 

\begin{proof}
	Assume without loss of generality that $T'$ is a tree (otherwise simply replace $T'$ with a tree containing it). Let $t = (1 - \alpha) n$ and let $v_1, \ldots, v_t$ be an ordering of the vertices of $T'$ such that $T'[\{v_1, \ldots, v_i\}]$ is a tree for every $1 \leq i \leq t$. Throughout the process, \Builder maintains a partial function $\varphi$ which is initially empty. For every positive integer $i$, let $w_i$ denote the vertex \Builder is offered in the $i$th round. In the first round, \Builder connects $w_1$ to an arbitrary vertex $u$; he then sets $\varphi(v_1) = w_1$ and $\varphi(v_2) = u$. For every $i \geq 2$, \Builder plays the $i$th round as follows. Let $r$ denote the largest integer for which $\varphi(v_r)$ has already been defined. If $w_i \notin \{\varphi(v_1), \ldots, \varphi(v_r)\}$, then \Builder connects $w_i$ to $\varphi(v_j)$, where $j \leq r$ is the unique integer for which $\{v_{r+1}, v_j\} \in E(T')$; he then sets $\varphi(v_{r+1}) = w_i$. Otherwise, \Builder claims an arbitrary edge incident with $w_i$, which he does not consider to be part of the tree he is building (alternatively, \Builder skips this round).
	
	It is evident that, by following the proposed strategy, \Builder's graph contains a copy of $T'$ as soon as $t$ different vertices are offered. Hence, it suffices to prove that w.h.p. at least $t$ different vertices are offered during the first $\ell$ rounds.\footnote{This can be restated as saying that in the coupon collector's problem, $\ell = \log(2/\alpha)n$ rounds suffice in order to collect $t = (1-\alpha)n$ different coupons w.h.p.} For every $1 \leq j \leq n$, let $I_j$ be the indicator random variable for the event: ``vertex $j$ was not offered during the first $\ell$ rounds of the process''. Let $X = \sum_{j=1}^n I_j$; then 
	$$
	\mathbb{E}(X) = \sum_{j=1}^n \mathbb{E}(I_j) = n (1 - 1/n)^{\ell} \leq n \cdot e^{- \ell/n} = \alpha n/2. 
	$$ 
	Observe that changing any one coordinate in the sequence $(w_1, \ldots, w_{\ell})$ of random vertices, can change the value of $X$ by at most $1$. Hence, it follows by Lemma~\ref{lem:Azuma} with $c = 1$ and $\lambda = \alpha n/2$, that
	\begin{align*}
	\mathbb{P}[X \geq \alpha n] \leq \mathbb{P}\left[X \geq \mathbb{E}(X) + \alpha n/2 \right] \leq e^{- \frac{(\alpha n/2)^2}{2 \ell}} = 
	e^{- \frac{\alpha^2 n^2}{8 \log(2/\alpha) n}} = o(1) \; ,
	\end{align*}
	where the last equality holds by our assumption that $n \gg \alpha^{-2} \log(1/\alpha)$.
\end{proof}

\noindent
We are now in a position to prove Theorem~\ref{thm:trees}. 

\begin{proof}[Proof of Theorem~\ref{thm:trees}]
	Let $n$, $\Delta$, and $T$ be as in the statement of the theorem. 
	Assume first that $\Delta \geq n^{1/11}$. 
	In this case \Builder employs the strategy presented in the proof of Lemma \ref{lem:greedy_embedding}. It is easy to see that as soon as each of the $n$ vertices has been offered, \Builder's graph contains a copy of $T$. It is well-known (and easy to prove) that this will happen w.h.p. in $(1 + o(1)) n \log n = O(n \log \Delta)$ rounds. For the remainder of the proof we thus assume that $\Delta < n^{1/11}$.
	
	Set $\alpha = 10^{-8}\Delta^{-5}$, and note that $n \gg \alpha^{-2}\log(1/\alpha)$, as required by Lemma \ref{lem:greedy_embedding}. Since $T$ is a forest, there are at least $n/2$ vertices of $T$ whose degree is at most $2$. Fix a set $B$ of $\alpha n$ such vertices. Set $A := V(H) \setminus B$ and $T' := T[A]$. At the first stage of his strategy, \Builder invokes Lemma \ref{lem:greedy_embedding}, which enables him to construct a copy of $T'$ w.h.p. in $\log(2/\alpha)n = O(n \log \Delta)$ rounds.
		
	Let $\varphi : V(T) \rightarrow [n]$ be a bijection such that $\varphi|_{A}$ is an embedding of $T'$ into \Builder's graph. 
	Note that the conditions of Lemma \ref{lem:main} are satisfied with $D = 1$, since forests are $1$-degenerate, and since we made sure that all vertices in $B$ have degree at most $2$. In the second stage of his strategy, \Builder applies (the strategy given by) Lemma \ref{lem:main} in order to construct a copy of $T$ in his graph w.h.p. This requires $O(n)$ additional rounds. The total number of rounds is thus $O(n\log \Delta)$, as required. 
	\end{proof}

\subsection{A Lower Bound : Proof of Proposition \ref{prop::offlineTree}}

\begin{proof}[Proof of Proposition~\ref{prop::offlineTree}]
	Let $\Delta$ and $n \geq n_0(\Delta)$ be as in the statement of the proposition. Since \Builder clearly needs at least $n-1$ rounds in order to build a tree on $n$ vertices, we can assume that $\Delta$ is a sufficiently large constant. 
	We prove the proposition for the $n$-vertex forest $T$ consisting of $\lfloor \frac{n}{\Delta+1} \rfloor$ pairwise-disjoint $(\Delta+1)$-vertex stars, together with some (at most $\Delta$) isolated vertices (if needed). Let us denote the center of the $i$th star by $u_i$, and its leaves by $x_{i,1},\dots,x_{i,\Delta}$ ($1 \leq i \leq \lfloor \frac{n}{\Delta+1} \rfloor$).
	
	Let $w_1, w_2, \ldots$ denote the sequence of random vertices offered to \Builder, and let $m = 0.1 n \log \Delta$. Suppose that \Builder did manage to build a copy of $T$ within $m$ rounds, and let $\varphi : V \to [n]$ be a bijection such that $\{\varphi(u), \varphi(v)\}$ is an edge in \Builder's graph for every $\{u,v\} \in E(T)$. It is then evident that, for every 
	$1 \leq i \leq \lfloor \frac{n}{\Delta+1} \rfloor$, either $\varphi(u_i)$ appears at least $\sqrt{\Delta}$ times in $(w_1, w_2, \ldots, w_m)$, or at least 
	$\Delta - \sqrt{\Delta}$ of the elements of the set $\{\varphi(x_{i,j}) : 1 \leq j \leq \Delta\}$ appear at least once in $(w_1, w_2, \ldots, w_m)$. A straightforward calculation then shows that either at least 
	$\frac{1}{\sqrt{\Delta}} \cdot \lfloor \frac{n}{\Delta+1} \rfloor \geq \frac{n}{2\Delta^{3/2}}$ of the vertices in 
	$\{\varphi(u_i) : 1 \leq i \leq \lfloor \frac{n}{\Delta+1} \rfloor\}$ were offered at least $\sqrt{\Delta}$ times each, or all but at most 
	$$
	\Delta + 
	\Delta \cdot \frac{1}{\sqrt{\Delta}} \cdot \left\lfloor \frac{n}{\Delta+1} \right\rfloor + 
	\left\lfloor \frac{n}{\Delta+1} \right\rfloor \cdot (\sqrt{\Delta}+1) \leq \frac{3n}{\sqrt{\Delta}}
	$$
	of the $n$ vertices were offered at least once. In the inequality above we use the assumption that $n \geq n_0(\Delta)$ for some suitable $n_0(\Delta)$. 
	So in order to prove that w.h.p. \Builder needs more than $m$ rounds to build $T$, it suffices to show that w.h.p. there are more than $3n/\sqrt{\Delta}$ vertices $1 \leq i \leq n$ that do not appear in $(w_1, w_2, \ldots, w_m)$, and less than $\frac{n}{2 \Delta^{3/2}}$ vertices $1 \leq i \leq n$ which appear in $(w_1, w_2, \ldots, w_m)$ at least $\sqrt{\Delta}$ times. 
	
	Let $X$ be the random variable which counts the number of vertices $1 \leq i \leq n$ that do not appear in $(w_1, w_2, \ldots, w_m)$. Our goal is to show that w.h.p. $X > 3n/\sqrt{\Delta}$. We have 
	$$
	\mathbb{E}(X) = n(1 - 1/n)^m \geq n \cdot e^{- m/(n-1)} = 
	n \cdot e^{-0.1 \log \Delta \cdot \frac{n}{n-1}} = n \cdot \Delta^{-0.1-o(1)} > 4n/\sqrt{\Delta},
	$$ 
	where the first inequality follows from the fact that $(1 - 1/n)^{n-1} \geq 1/e$ for each $n \geq 1$, and the last inequality holds for sufficiently large $n$ and $\Delta$. We will use Lemma~\ref{lem:Azuma} to prove that w.h.p. $X$ is not much smaller than its expected value. Observe that changing any single coordinate in the sequence of random vertices $(w_1, w_2, \ldots, w_m)$ can change the value of $X$ by at most $1$. Therefore, applying Lemma~\ref{lem:Azuma} with parameters $c = 1$ and $\lambda = n/\sqrt{\Delta}$ yields 
	\begin{align*}
	\mathbb{P} \left[X \leq 3n/\sqrt{\Delta} \right] &\leq \mathbb{P} \left[X \leq \mathbb{E}(X) - n/\sqrt{\Delta} \right] \leq 
	e^{- \frac{(n/\sqrt{\Delta})^2}{2m}} \leq 
	e^{- \frac{(n/\sqrt{\Delta})^2}{n \log \Delta}}
	= e^{- \frac{n}{\Delta \log \Delta}} = o(1),
	\end{align*}
	where the last equality holds since $n$ is sufficiently large with respect to $\Delta$.
	
	Let $Z$ be the random variable which counts the number of vertices $1 \leq i \leq n$ that appear at least $\sqrt{\Delta}$ times in $(w_1, w_2, \ldots, w_m)$. Our goal is to prove that w.h.p. $Z < \frac{n}{2 \Delta^{3/2}}$. For every $1 \leq i \leq n$, let $Z_i$ be the random variable counting the number of times $i$ appears in $(w_1, w_2, \ldots, w_m)$. Then $Z_i \sim \Bin(m, 1/n)$, implying that $\mathbb{E}[Z_i] = m/n \leq 0.1 \log \Delta$. Applying Lemma~\ref{lem:Chernoff} with parameter $\lambda = 2 \log \Delta \leq \sqrt{\Delta} - 0.1 \log \Delta$, we obtain
	\begin{equation} \label{eq::veryLargeOutDeg}
	\mathbb{P}[Z_i \geq \sqrt{\Delta}] \leq 
	\mathbb{P}[Z_i \geq \mathbb{E}[Z_i] + \lambda] \leq 
	\exp\left( {- \frac{(2 \log \Delta)^2}{2(\mathbb{E}[Z_i] + \frac{2}{3}\log\Delta)}}\right) \leq 
	e^{-2\log\Delta} \leq 
	\frac{1}{4\Delta^{3/2}} \; ,
	\end{equation}
	where the last inequality holds for sufficiently large $\Delta$.  For every $1 \leq i \leq n$, let $I_i$ be the indicator random variable for the event $Z_i \geq \sqrt{\Delta}$; note that $Z = \sum_{i=1}^n I_i$. It follows by~\eqref{eq::veryLargeOutDeg} and by the linearity of expectation that $\mathbb{E}(Z) \leq \frac{n}{4\Delta^{3/2}}$. 
	Since changing any single coordinate in the sequence of random vertices $(w_1, w_2, \ldots, w_m)$ can change the value of $Z$ by at most $1$, applying Lemma~\ref{lem:Azuma} with parameters $c = 1$ and 
	$\lambda = \frac{n}{4\Delta^{3/2}}$ yields
	\begin{equation*}
	\mathbb{P} \left[Z \geq \frac{n}{2 \Delta^{3/2}} \right] \leq \mathbb{P} \left[Z \geq \mathbb{E}(Z) + \frac{n}{4\Delta^{3/2}} \right] \leq 
	e^{- \frac{n^2}{32\Delta^3 m}} 
	\leq e^{- \frac{n^2}{32 \Delta^3 n \log \Delta}} = o(1),
	\end{equation*}
	where the equality holds since $n$ is sufficiently large with respect to $\Delta$.
\end{proof}

\section{Non-Adaptive Strategies}\label{sec:non_adaptive}
In this section we prove Theorems \ref{prop::non-adaptive_isolated_vertices}, \ref{prop::non-adaptive_Hamilton_cycle} and \ref{prop::non-adaptive_clique_factor}.
\begin{proof}[Proof of Theorem~\ref{prop::non-adaptive_isolated_vertices}]
	Let $\mathcal{L} = \{L^w : w \in [n]\}$ be a family of lists as in the definition of a non-adaptive strategy. Recall that for each $w \in [n]$, the list $L^w$ is a permutation of $[n] \setminus \{w\}$. Our goal is to show that the strategy corresponding to $\mathcal{L}$ requires w.h.p. at least $\Omega(n\sqrt{\log n})$ rounds to make all $n$ vertices non-isolated. 
	
	Set $t = n\sqrt{\log n}/4$, and let $w_1,\dots,w_t$ be the first $t$ random vertices \Builder is offered. For every $v \in [n]$, let $t_v$ denote the number of appearances of $v$ in the sequence $(w_1, \ldots, w_t)$. Let 
	$U = \{v \in [n] : t_v > \sqrt{\log n}/2\}$ and let $W = \bigcup_{v \in U} \{L^v(i) : 1 \leq i \leq t_v\}$.
	Our main observation (which follows immediately from the definitions of $U$ and $W$) is that a vertex $u \in [n]$ will be left isolated after $t$ rounds, if all of the following conditions hold:
	\begin{description}
		\item [(1)] $u$ does not appear in $(w_1, \ldots, w_t)$;
		\item [(2)] none of the vertices $v$, for which $u$ is included among the first $\sqrt{\log n}/2$ elements of $L^v$, appear in $(w_1, \ldots, w_t)$;
		\item [(3)] $u \notin W$.
	\end{description}
	
	So in order to complete the proof of the theorem, it remains to prove that w.h.p. there exists a vertex $u \in [n]$ which satisfies Conditions (1), (2), and (3) above. 
	To this end, we will use a two-round exposure argument.
	Let $Z$ denote the set of vertices which do not appear in $(w_1, \ldots, w_t)$; clearly $Z \cap U = \emptyset$. In the following claim we collect some simple facts regarding the sets $U,Z$ and the integers $(t_v : v \in [n])$. 
	\begin{claim}\label{claim:non_adaptive}
		The following hold w.h.p.
		\begin{enumerate}
			\item[(a)] $|U| \leq e^{-\Omega(\sqrt{\log n})}n$.
			\item[(b)] $t_v < \log n$ for every $v \in [n]$. 
			\item[(c)] $|Z| \geq (1 - o(1))e^{-\sqrt{\log n}/4}n$.
		\end{enumerate}
	\end{claim}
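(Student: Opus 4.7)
The plan is to establish all three parts by straightforward applications of the concentration inequalities collected in Section~\ref{sec:preliminary}, using as the central object the binomial random variable $t_v \sim \Bin(t, 1/n)$ that counts appearances of vertex $v$. Note that $\mathbb{E}[t_v] = t/n = \sqrt{\log n}/4$, so the threshold $\sqrt{\log n}/2$ in (a) is exactly twice the mean and the threshold $\log n$ in (b) is a constant factor times $(\log n)^{1/2}$ above the mean. I will treat (b) first as it is the simplest, then (a), then (c).

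For (b), I would apply Lemma~\ref{lem:Chernoff} to $t_v$ with $\lambda = \log n - \sqrt{\log n}/4$. Since $\lambda = (1 - o(1))\log n$ dominates both $\mathbb{E}[t_v]$ and its own third, the exponent $\lambda^2 / (2(\mathbb{E}[t_v] + \lambda/3))$ is $(3/2 - o(1))\log n$, giving $\mathbb{P}[t_v \geq \log n] \leq n^{-3/2}$. A union bound over the $n$ vertices shows that w.h.p.\ no $t_v$ reaches $\log n$.

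For (a), the same Chernoff bound (now with $\lambda = \mathbb{E}[t_v] = \sqrt{\log n}/4$) yields $\mathbb{P}[t_v > \sqrt{\log n}/2] \leq e^{-\Omega(\sqrt{\log n})}$, so by linearity of expectation $\mathbb{E}[|U|] \leq n \cdot e^{-c\sqrt{\log n}}$ for some absolute $c > 0$. To turn this into a high-probability bound, I would apply Azuma's inequality (Lemma~\ref{lem:Azuma}) to $|U|$, viewed as a function of the $t$ random samples $w_1,\ldots,w_t$. Changing a single $w_i$ affects the counts $t_v$ for at most two vertices and hence can alter $|U|$ by at most $2$, so taking $c=2$ and $\lambda = \mathbb{E}[|U|]$ gives
\[
\mathbb{P}\bigl[|U| \geq 2\mathbb{E}[|U|]\bigr] \leq \exp\!\left(-\frac{\mathbb{E}[|U|]^2}{8t}\right) = \exp\!\left(-\Omega\bigl(n \cdot e^{-2c\sqrt{\log n}}/\sqrt{\log n}\bigr)\right) = o(1),
\]
using that $e^{-2c\sqrt{\log n}}$ decays slower than any polynomial in $n$. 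This gives $|U| \leq 2\mathbb{E}[|U|] = n \cdot e^{-\Omega(\sqrt{\log n})}$ w.h.p.

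For (c), I would first compute $\mathbb{E}[|Z|] = n(1-1/n)^t$ directly. Writing $(1-1/n)^t = \exp(t\ln(1-1/n)) = e^{-t/n}(1 + O(t/n^2))$ and substituting $t/n = \sqrt{\log n}/4$ and $t/n^2 = o(1)$, one obtains $\mathbb{E}[|Z|] = (1-o(1))\,n\,e^{-\sqrt{\log n}/4}$. For concentration, Azuma with Lipschitz constant $1$ (swapping a single $w_i$ changes $|Z|$ by at most $1$) and deviation $\lambda = n/(\log n)^{10}$ gives
\[
\mathbb{P}\bigl[|Z| \leq \mathbb{E}[|Z|] - \lambda\bigr] \leq \exp\!\left(-\frac{\lambda^2}{2t}\right) = \exp\!\left(-\Omega\bigl(n/(\log n)^{21}\bigr)\right) = o(1).
\]
Since $\lambda = o(\mathbb{E}[|Z|])$, again by the subpolynomial decay of $e^{-\sqrt{\log n}/4}$, we conclude $|Z| \geq (1-o(1))\,n\,e^{-\sqrt{\log n}/4}$ w.h.p., which is exactly (c).

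The only delicate point — and I would not call it a real obstacle — is making sure the Azuma deviation is simultaneously much smaller than the expectation (to retain the $(1-o(1))$ factor, respectively the constant factor $2$) and much larger than $\sqrt{t}$ (to drive the concentration probability to zero). Both parts exploit the fact that $e^{-c\sqrt{\log n}}$ is $n^{-o(1)}$ while $t = O(n\sqrt{\log n})$ is only slightly superlinear, leaving ample room between these two scales.
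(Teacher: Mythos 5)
Your parts (b) and (c) are essentially the paper's argument: for (b) the paper uses a direct binomial tail estimate $\binom{t}{\log n}n^{-\log n}$ rather than Chernoff, and for (c) it takes $\lambda = n^{2/3}$ rather than $n/(\log n)^{10}$ in Azuma, but these are interchangeable choices that give the same conclusions. The discrepancy is in your proof of (a). You replace the paper's one-line Markov step with Azuma's inequality, and this is where a genuine gap appears: to conclude
$\exp\!\bigl(-\mathbb{E}[|U|]^2/(8t)\bigr)=o(1)$
you need $\mathbb{E}[|U|]\gg\sqrt{t}$, i.e.\ a \emph{lower} bound on $\mathbb{E}[|U|]$, whereas the Chernoff computation only gave you the \emph{upper} bound $\mathbb{E}[|U|]\le n\,e^{-c\sqrt{\log n}}$. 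Writing
$\exp\!\bigl(-\mathbb{E}[|U|]^2/(8t)\bigr)=\exp\!\bigl(-\Omega\bigl(n e^{-2c\sqrt{\log n}}/\sqrt{\log n}\bigr)\bigr)$
silently treats the upper bound as a lower bound; if $\mathbb{E}[|U|]$ happened to be, say, $O(1)$, the Azuma exponent would be $O(1/t)$ and give you nothing. The statement you want is still true, and the hole is easy to plug --- either show $\mathbb{E}[|U|]\ge n\,e^{-C\sqrt{\log n}}$ by a matching lower tail estimate, or split into cases (if $\mathbb{E}[|U|]\le\sqrt{n}$, Markov already gives $|U|\le n^{0.6}$ w.h.p.; otherwise your Azuma step applies). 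But the cleanest route, and the one the paper takes, is to skip concentration altogether: Markov gives
$\mathbb{P}\bigl[|U|\ge n\,e^{-c\sqrt{\log n}/2}\bigr]\le\mathbb{E}[|U|]\,/\,\bigl(n\,e^{-c\sqrt{\log n}/2}\bigr)\le e^{-c\sqrt{\log n}/2}=o(1)$,
which relies only on the upper bound on $\mathbb{E}[|U|]$ that you already have. Since the claim is stated with $\Omega(\cdot)$ in the exponent, losing a factor of two in the constant is harmless.
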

	\begin{proof}
		We start with Item (a). Recall that for a given vertex $v \in [n]$, we have $t_v \sim \Bin(t,\frac{1}{n})$; hence, $\mathbb{E}[t_v] = \sqrt{\log n}/4$. Now, by Lemma \ref{lem:Chernoff} with $\lambda = \sqrt{\log n}/4$, we have 
		$$
		\mathbb{P}[v \in U] = \mathbb{P}[t_v > \sqrt{\log n}/2] =  
		\mathbb{P}[t_v > \mathbb{E}[t_v] + \sqrt{\log n}/4] \leq 
		e^{-\frac{\log n}{O(\sqrt{\log n})}} = 
		e^{-\Omega(\sqrt{\log n})} \; .
		$$ 
		It thus follows by Markov's inequality that w.h.p. $|U| \leq e^{-\Omega(\sqrt{\log n})}n$. 
		
		We now prove Item (b). Observe that for every $v \in [n]$ we have
		$$
		\mathbb{P}[t_v \geq \log n] \leq 
		\binom{t}{\log n}\left( \frac{1}{n} \right)^{\log n} \leq \left( \frac{et}{n\log n} \right)^{\log n} \leq \left( \frac{1}{\sqrt{\log n}} \right)^{\log n} = o(1/n).
		$$
		A union bound over $[n]$ then shows that w.h.p. $t_v < \log n$ for every $v \in [n]$.
		
		Finally, we prove Item (c). For each $v \in [n]$, the probability that $v \in Z$ is $(1 - 1/n)^t = (1 - o(1))e^{-\sqrt{\log n}/4}$. Therefore, $\mathbb{E}[|Z|] = (1 - o(1))e^{-\sqrt{\log n}/4}n$. To show that $|Z|$ is concentrated around its expected value, observe that changing any single coordinate in the sequence $(w_1,\dots,w_t)$ of random vertices, can change the value of $|Z|$ by at most $1$. Hence, by Lemma~\ref{lem:Azuma} with $c=1$ and (say) $\lambda = n^{2/3}$, we have 
		$$
		\mathbb{P}\left[ |Z| \leq \mathbb{E}[|Z|] - n^{2/3} \right] \leq 
		e^{-\frac{n^{4/3}}{2t}} = e^{-\frac{n^{4/3}}{O(n\sqrt{\log n})}} = o(1).
		$$ 
		We conclude that w.h.p. $|Z| \geq (1 - o(1))e^{-\sqrt{\log n}/4}n$. 
	\end{proof}
	
	From now on we condition on the events stated in Items (a)-(c) of Claim \ref{claim:non_adaptive} (which hold w.h.p. by that claim). Items (a) and (b) imply that $|W| \leq |U| \log n \leq \log n \cdot e^{-\Omega(\sqrt{\log n})}n = o(n)$. 
	Observe that conditioning on their sizes, $U,Z$ are uniformly distributed among all pairs of disjoint subsets of $[n]$ of the corresponding sizes. 
	From this point on we condition on $U$, which in turn determines $W$ (or, more precisely, the set $\bigcup_{v \in U} \{L^v(i) : 1 \leq i \leq \log n\}$, which contains $W$ and has size $o(n)$).  
	
	For each $u \in [n]$, let $A_u$ be the set of all vertices $v \in [n] \setminus \{u\}$ such that $u$ is included among the first $\sqrt{\log n}/2$ elements of $L^v$.
	Let $V_0$ be the set of all $u \in [n]$ satisfying $|A_u| \leq \sqrt{\log n}$. Since the union (as a multiset) of the first $\sqrt{\log n}/2$ elements in all lists has size $n \sqrt{\log n}/2$ altogether, we deduce that $|V_0| \geq n/2$; hence $|V_0 \setminus W| \geq (1/2 - o(1))n$. 
	
	Now expose $Z$, conditioning on its size. 
	Observe that if $u \in [n] \setminus W$ is such that $A_u \cup \{u\} \subseteq Z$, then $u$ satisfies Conditions (1), (2) and (3). So from now on our goal is to show that w.h.p. there exists a vertex $u \in [n] \setminus W$ for which 
	$A_u \cup \{u\} \subseteq Z$. Since each $u \in [n]$ belongs to at most 
	$\sqrt{\log n}/2 + 1$ of the sets $\left\{ A_v \cup \{v\}  : v \in [n] \right\}$, and since $|A_u \cup \{u\}| \leq \sqrt{\log n} + 1$ for each $u \in V_0$, one can find a collection $B_1,\dots,B_{s'}$ of 
	$$
	s' \geq 
	\left\lfloor 
	\frac{|V_0 \setminus W|}{(\sqrt{\log n}/2 + 1)(\sqrt{\log n} + 1)+1} \right\rfloor
	= \Omega\left( \frac{n}{\log n} \right)
	$$ 
	pairwise-disjoint sets among the sets 
	$\{A_u \cup \{u\} : u \in V_0 \setminus W\}$. 
	Since $B_1,\dots,B_{s'}$ are pairwise-disjoint, at least $s := s' - |U| \geq \Omega(n/\log n) - e^{-\Omega(\sqrt{\log n})}n = \Omega(n/\log n)$ of these sets are contained in $[n] \setminus U$. So suppose, without loss of generality, that $B_1,\dots,B_s \subseteq [n] \setminus U$, and let us show that w.h.p., there is $1 \leq i \leq s$ such that $B_i \subseteq Z$. 
	To this end, we will couple $Z$ with a binomial random set of slightly smaller size. 
	Recall that we are conditioning on the size of $Z$ and on the event  
	$|Z| \geq (1 - o(1))e^{-\sqrt{\log n}/4}n$, which occurs w.h.p. by Claim~\ref{claim:non_adaptive}. We denote $z = |Z|$, recalling that (under this conditioning), $Z$ is distributed uniformly among all subsets of $[n] \setminus U$ of size $z$. We generate $Z$ by performing the following experiment: set $p = \frac{z}{2n}$, and let $R$ be a random subset of $[n] \setminus U$, obtained by independently including each element of $[n] \setminus U$ with probability $p$. If $|R| \leq z$, then we uniformly choose a set $Z' \subseteq [n] \setminus U$ of size $z$ which contains $R$. It is easy to see that, conditioned on $|R| \leq z$, the set $Z'$ is distributed uniformly among all subsets of $[n] \setminus U$ of size $z$. Hence (conditioned on $|R| \leq z$), $Z'$ has the same distribution as $Z$ (conditioned on $|Z| = z$). Note that $|R|$ is stochastically dominated by $\Bin(n,\frac{z}{2n})$, so by Lemma \ref{lem:Chernoff} with $\lambda = \frac{z}{2}$ we have
	$$
	\mathbb{P}[|R| > z] \leq 
	\mathbb{P}\left[ |R| \geq \mathbb{E}[|R|] + \frac{z}{2}  \right] \leq 
	e^{-\frac{(z/2)^2}{2(\mathbb{E}[|R|] + z/6)}} \leq e^{-\Omega(z)} = o(1).
	$$
	Since the sets $B_1, \ldots, B_s$ are pairwise-disjoint and of size at most $\sqrt{\log n} + 1$ each, the probability that $R$ contains none of these sets is at most
	\begin{align*}
	\left( 1 - p^{\sqrt{\log n} + 1} \right)^{s} &\leq 
	\exp\left( {-p^{\sqrt{\log n} + 1} \cdot s} \right) = 
	\exp\left( {-\left( \frac{z}{2n} \right)^{\sqrt{\log n} + 1} \cdot s} \right) \\
	&\leq \exp \left(- {e^{- \log n/2}} \cdot s \right) = e^{- s/\sqrt{n}} = e^{-\Omega(\sqrt{n}/\log n)} = o(1),
	\end{align*}
	where in the second inequality we used the assumption that 
	$z \geq (1 - o(1))e^{-\sqrt{\log n}/4}n$. We conclude that w.h.p. there will be some $1 \leq i \leq s$ such that $B_i \subseteq Z$, as required.  
\end{proof}

%
%

\begin{proof}[Proof of Theorem \ref{prop::non-adaptive_Hamilton_cycle}]
	Setting $k = n/\sqrt{\log n}$, partition $[n]$ into sets $V_1,\dots,V_k$, each of size either $\lfloor n/k \rfloor$ or $\lceil n/k \rceil$. 
	For each $1 \leq i \leq k$ and $v \in V_i$, \Builder sets the adjacency list $L^v$ so that first appear all the vertices of $V_i \setminus \{v\}$, then all the vertices of $V_{i+1}$ (where $i+1$ is taken modulo $k$), and finally all other vertices. In each of the three ``segments", the inner order among the vertices is arbitrary.

	Now, set $t = 8n \sqrt{\log n}$, and let $(w_1, \ldots, w_t)$ be the first $t$ random vertices \Builder is offered. Let $W$ be the set of vertices appearing at most $\lceil n/k \rceil$ times in $(w_1, \ldots, w_t)$. Let $\mathcal{A}$ be the event that $|V_i \cap W| \leq |V_i|/2 - 1$ for each $1 \leq i \leq k$. We will show that $\mathcal{A}$ happens w.h.p., and that if $\mathcal{A}$ happens then after $t$ rounds, \Builder's graph contains a Hamilton cycle. 
	
	We start by estimating $\mathbb{P}[\mathcal{A}]$. Fixing $1 \leq i \leq k$, note that if $|V_i \cap W| \geq |V_i|/2$ then there is a set $U \subseteq V_i$ of size $|U| = |V_i|/2 = (1+o(1))\sqrt{\log n}/2$, such that $X := |\{1 \leq j \leq t : w_j \in U\}| \leq |U| \cdot \lceil n/k \rceil \leq \log n$. Note that $X$ has the distribution $\Bin(t, |U|/n)$. By Lemma \ref{lem:Chernoff} with $\lambda = (3+o(1))\log n$, and by our choice of $t$, we get
	\begin{align*}
	\mathbb{P}[X \leq \log n] &\leq 
	\mathbb{P} \left[\Bin\left(8n \sqrt{\log n} \; , \; \frac{(1+o(1))\sqrt{\log n}}{2n} \right) \leq \log n \right]
	\leq 
	\exp\left( -{\frac{(1+o(1))9\log^2n}{8\log n}} \right) = 
	o(n^{-1.1}) \; . 
	\end{align*}
	By taking the union bound over all $k = o(n)$ indices $1 \leq i \leq k$, and over all at most $2^{|V_i|} = 2^{(1+o(1))\sqrt{\log n}} = n^{o(1)}$ choices of $U \subseteq V_i$, we obtain $\mathbb{P}[\mathcal{A}] = 1 - o(1)$. 
	
	Suppose now that $\mathcal{A}$ happened. Then for each $1 \leq i \leq k$, each of the at least $|V_i|/2 + 1$ vertices $v \in V_i \setminus W$ has been connected to all vertices in $V_i$, and to at least $2$ vertices in $V_{i+1}$ (indeed, this is due to our choice of the lists $L_v$, and the definition of $\mathcal{A}$). It follows that the minimum degree inside $V_i$ is at least $|V_i|/2 + 1$ (for each $1 \leq i \leq k$), and that we can choose distinct vertices $x_i,y_i \in V_i$ such that $y_i$ is connected to $x_{i+1}$ for each $1 \leq i \leq k$ (with indices taken modulo $k$). 
	
	Recall that a graph is called {\em Hamilton-connected} if for each pair of distinct vertices $u,v$, there is a Hamilton path whose endpoints are $u$ and $v$. It follows from a classical result of Ore \cite{Ore} that any $m$-vertex graph with minimum degree at least $\frac{m + 1}{2}$ is Hamilton-connected. Let $G$ denote \Builder's graph immediately after $t$ rounds of the process. By the above result of Ore, $G[V_i]$ is Hamilton-connected for each $1 \leq i \leq k$. So fix, for each $1 \leq i \leq k$, a Hamilton path $P_i$ in $G[V_i]$, whose endpoints are $x_i$ and $y_i$. Now it is easy to see that $P_1,\{y_1,x_2\},P_2,\{y_2,x_3\},\dots,\{y_{k-1},x_k\},P_k,\{y_k,x_1\}$ is a Hamilton cycle in $G$, as required. 
\end{proof}

\begin{proof}[Proof of Theorem \ref{prop::non-adaptive_clique_factor}]
	The proof is somewhat similar to the proof of Theorem \ref{prop::non-adaptive_Hamilton_cycle}, and so we only give a rough sketch. 
	Partition $[n]$ into $k = n/\sqrt{\log n}$ parts $V_1, \ldots, V_k$ whose sizes are all divisible by $r$ and are as close to each other as possible. Let $s_{\min} = \min \{|V_i| : 1 \leq i \leq k\}$ and let $s_{\max} = \max \{|V_i| : 1 \leq i \leq k\}$; observe that $s_{\min}, s_{\max} = (1\pm o(1)) \sqrt{\log n}$. Then, for every $1 \leq i \leq k$ and every $v \in V_i$, \Builder sets the adjacency list $L^v$ so that first appear all the vertices of $V_i \setminus \{v\}$ (in an arbitrary order), and then all other vertices (in an arbitrary order).
	
	Now, set $t = C n \sqrt{\log n}$ (where $C = C(r)$ will be chosen later), and let $(w_1, \ldots, w_t)$ be the first $t$ random vertices offered to \Builder. Let $W$ be the set of vertices appearing at most $s_{\max} - 2$ times in $(w_1, \ldots, w_t)$. Observe that if $|W \cap V_i| < s_{\min}/r$, then the resulting induced subgraph of \Builder $G[V_i]$ has minimum degree at least $\left(1 - 1/r \right) |V_i|$ and thus admits a $K_r$-factor by the Hajnal-Szemer\'edi Theorem~\cite{HS}. If this happens for every $1 \leq i \leq k$, then the union over $1 \leq i \leq k$ of these $K_r$-factors obviously forms a $K_r$-factor of $G$. It remains to prove that w.h.p. $|W \cap V_i| < s_{\min}/r$ holds for every $1 \leq i \leq k$. Fix some $1 \leq i \leq k$. Then
	\begin{align*}
	\mathbb{P}[|W \cap V_i| \geq s_{\min}/r] &\leq \binom{s_{\max}}{s_{\min}/r} \cdot \mathbb{P} \left[\Bin(t, s_{\min}/(r n)) \leq (s_{\max} - 2) \cdot s_{\min}/r \right] \\ 
	&\leq (3 r)^{s_{\min}/r} \cdot \exp \left\{- \frac{r n}{2 t s_{\min}} \cdot \frac{C^2 \cdot s_{\min}^4}{4 r^2} \right\} 
	\\ &=  
	(3r)^{s_{\min}/r} \cdot 
	\exp\left\{ -\frac{C \cdot s_{\min}^3}{8r\sqrt{\log n}} \right\}
	\\ &\leq 
	(3r)^{\sqrt{\log n}} \cdot 
	\exp\left\{ -\frac{C \cdot (1-o(1))\log n}{8r} \right\}
	\\
	&\leq \exp \left\{- \frac{C \log n}{9 r} \right\} = o(1/k),
	\end{align*}
	where the second inequality holds by Lemma~\ref{lem:Chernoff} with $\lambda = \frac{C}{2} \cdot \frac{s_{\min}^2}{r}$, and the equality holds if, say, $C = 9r$. We also assumed throughout that $n$ is large enough with respect to $r$. A union bound over all $1 \leq i \leq k$ then shows that w.h.p. $|W \cap V_i| < s_{\min}/r$ holds for every $1 \leq i \leq k$, as required. 
\end{proof}

\bibliographystyle{plain}
\bibliography{semi_random}

%
%
%
%
\end{document}